\documentclass[11pt]{article}

\textwidth 14cm
\oddsidemargin 1cm
\evensidemargin 1cm
\usepackage{amsfonts}
\usepackage{latexsym}
\usepackage{amsmath}
\usepackage{amssymb}
\usepackage{amsthm, xcolor}
\newtheorem{theorem}{Theorem}[section]
\newtheorem{lemma}[theorem]{Lemma}

\newtheorem{proposition}[theorem]{Proposition}
\newtheorem{corollary}[theorem]{Corollary}

\theoremstyle{definition}
\newtheorem{definition}[theorem]{Definition}
\newtheorem{example}{Example}

\theoremstyle{remark}
\newtheorem*{remark}{Remark}

\newtheorem*{note*}{Note}

\makeatletter

\makeatother

\newcommand{\abs}[1]{\left\lvert#1\right\rvert}
\newcommand{\norm}[1]{\left\lVert#1\right\rVert}

\newcommand{\wtd}[1]{\widetilde{#1}}


\def\note#1{\ifvmode\leavevmode\fi\vadjust{\vbox to0pt{\vss
 \hbox to 0pt{\hskip\hsize\hskip1em
\vbox{\hsize2cm\small\raggedright\pretolerance10000
 \noindent #1\hfill}\hss}\vbox to8pt{\vfil}\vss}}}
 

\begin{document}

\small

\title{Affine isoperimetric inequalities on flag
  manifolds}

\author{Susanna Dann\thanks{Thanks the Oberwolfach Research Institute
    for Mathematics for its hospitality and support, where part of
    this work was carried out.} \and Grigoris Paouris
  \thanks{Supported by Simons Foundation Collaboration Grant \#527498
    and NSF grant DMS-1812240.} \and Peter Pivovarov \thanks{Supported
    by NSF grant DMS-1612936.}}


\maketitle

\begin{abstract}
  Building on work of Furstenberg and Tzkoni, we introduce ${\bf
    r}$-flag affine quermassintegrals and their dual versions. These
  quantities generalize affine and dual affine quermassintegrals as
  averages on flag manifolds (where the Grassmannian can be considered
  as a special case). We establish affine and linear invariance
  properties and extend fundamental results to this new setting. In
  particular, we prove several affine isoperimetric inequalities from
  convex geometry and their approximate reverse forms. We also
  introduce functional forms of these quantities and establish
  corresponding inequalities.
\end{abstract}


\section{Introduction}

Affine isoperimetric inequalities provide a rich foundation for
understanding principles in geometry and analysis that arise in the
presence of symmetries. Among the most fundamental examples is the
Blaschke-Santal\'{o} inequality \cite{Sant} on the product of volumes
of an origin-symmetric convex body $L$ in $\mathbb{R}^n$ and its polar
$L^{\circ}=\{x\in \mathbb{R}^n:\langle x, y\rangle \leq 1, \;\forall
y\in L\}$. The latter asserts that this product is maximized for
ellipsoids, i.e.,
\begin{equation}
  \label{eqn:BS}
  \abs{L}\abs{L^{\circ}} \leq \omega_n^2,
\end{equation}
where $\omega_n$ is the volume of the unit Euclidean ball $B_2^n$.
The Blaschke-Santal\'{o} inequality, and its version for
non-origin-symmetric bodies, is one of several equivalent forms of the
affine isoperimetric inequality; see e.g., the survey
\cite{Lutwak_survey}. Moreover, it admits numerous extensions: for
example, $L_p$ versions \cite{LZ}, generalizations from convex bodies
to functions, e.g., \cite{Ball1}, \cite{AKM}, \cite{FM} with
applications to concentration of measure \cite{AKM}, \cite{Lehec08};
further functional affine isoperimetric inequalities, e.g.,
\cite{AKSW}; stronger versions in which stochastic dominance holds
\cite{CFPP}.

Another fundamental affine isoperimetric inequality is the Petty polar
projection inequality \cite{Petty}.  This concerns projection bodies,
which are special zonoids that play a fundamental role in convex
geometry, functional analysis, among other fields, e.g.,
\cite{S}, \cite{GarB}. The projection body of a convex body
$L\subseteq\mathbb{R}^n$ is the convex body $\Pi L$ defined by its
support function in direction $\theta\in S^{n-1}$ by $h_{\Pi
  L}(\theta)=\abs{P_{\theta^{\perp}}L}$, where $P_{\theta^{\perp}}$ is
the orthogonal projection onto $\theta^{\perp}$. The Petty projection
inequality asserts that the affine-invariant quantity
$\abs{L}^{n-1}\abs{(\Pi L)^{\circ}}$ is maximized by ellipsoids, i.e.,
\begin{equation}
  \label{eqn:Petty}
  \abs{L}^{n-1}\abs{(\Pi L)^{\circ}}\leq
  \omega_n^{n}\omega_{n-1}^{-n}.
\end{equation}
The Petty projection inequality is the geometric foundation for
Zhang's affine Sobolev inequality \cite{Zhang}. Its equivalent forms
and extensions have given rise to fundamental inequalities in
analysis, geometry and information theory, e.g., \cite{LYZ_Lp},
\cite{LYZ_Sobolev}.

The affine invariance in inequalities \eqref{eqn:BS} and
\eqref{eqn:Petty} follows from volumetric considerations.  However, as
we will review below, the underlying principle goes much deeper and
extends to the family of affine quermassintegrals, of which
$\abs{L^{\circ}}$ and $\abs{(\Pi L)^{\circ}}$ are just two special
cases, up to normalization. Formally, the affine quermassintegrals are
defined for compact sets $L\subseteq \mathbb{R}^n$ and $1\leq k\leq n$
by
\begin{equation}
  \label{aff-quer-1-def}
  \Phi_{[k]}(L)=
  \left(\int_{G_{n,k}}\abs{P_EL}^{-n}d\nu_{n,k}(E)\right)^{-\frac{1}{kn}},
\end{equation}
where $G_{n,k}$ is the Grassmannian manifold of $k$-dimensional linear
subspaces equipped with the Haar probability measure
$\nu_{n,k}$. Writing $\abs{L^{\circ}}$ and $\abs{(\Pi L)^{\circ}}$ in
polar coordinates shows a direct connection to $k=1$ and $k=n-1$ in
\eqref{aff-quer-1-def}, respectively.  As the name suggests, they are
affine-invariant, i.e., $\Phi_{[k]}(TL)=\Phi_{[k]}(L)$ for each volume
preserving affine transformation $T$, as proved by Grinberg \cite{Gr},
extending earlier work on ellipsoids by Furstenberg-Tzkoni \cite{FT}
and Lutwak \cite{Lu1}.

The quantities $\Phi_{[k]}(L)$ are affine versions of
quermassintegrals or intrinsic volumes, which play a central role in
Brunn-Minkowski theory \cite{S}.  In particular, the intrinsic volumes
$V_1(L),\ldots, V_n(L)$ of a convex body $L$ admit similar
representations through Kubota's integral recursion as
\begin{equation}
  V_k(L) = c_{n,k}\int_{G_{n,k}}\abs{P_EL}d\nu_{n,k}(E),
\end{equation}
where $c_{n,k}$ is a constant that depends only on $n$ and $k$.  They
enjoy many fundamental inequalities, such as
\begin{equation}
  \label{eqn:Alexandrov}
 V_k(L)\geq V_k(r_L B_2^n), 
\end{equation}
for $k=1,\ldots,n-1$, where $r_L$ is the radius of a Euclidean ball
having the same volume as $L$.  Taking $k=1$ in \eqref{eqn:Alexandrov}
corresponds to Urysohn's inequality, while $k=n-1$ is the standard
isoperimetric inequality. From Jensen's inequality one sees that
\eqref{eqn:BS} and \eqref{eqn:Petty} provide stronger affine-invariant
analogues of \eqref{eqn:Alexandrov} for $k=1$ and $k=n$, respectively.
For the intermediary values $1<k<n$, the inequalities in
\eqref{eqn:Alexandrov} are well-known consequences of
Alexandrov-Fenchel inequality, e.g., \cite{S}. On the other hand, it
is still an open problem, posed by Lutwak \cite{Lu1}, \cite[Problem
  9.3]{GarB}, to determine minimizers for their affine versions,
namely, to prove that for $1<k<n-1$,
\begin{equation}
  \label{eqn:Lutwak_conj}
  \Phi_{[k]}(L)\geq \Phi_{[k]}(r_LB_2^n).
\end{equation}

In the last 40 years, a compelling dual theory, initiated by Lutwak in
\cite{Lut_dual}, has flourished (see, e.g., \cite{S}, \cite{GarB}). Rather than
convex bodies and projections onto lower-dimensional subspaces, this
involves star-shaped sets and intersections with subspaces. As above,
a key isoperimetric inequality lies at its foundation. The
intersection body of a star-shaped body $L$ is the star-shaped body
$IL$ with radial function $\rho_{IL} (\theta) := | L \cap
\theta^{\perp} |$.  The Busemann intersection inequality \cite{Bu},
proved originally for convex bodies $L$, states that
\begin{equation}
  \label{eqn:Busemann}
  \abs{IL}\abs{L}^{-(n-1)}\leq \omega_{n-1}^n\omega_n^{-(n-2)}.
\end{equation}
The volume of the intersection body lies at one end-point of a
sequence of $SL_{n}$-invariant quantities that are called the dual
affine quermassintegrals. These are $SL_{n}$-invariant analogs of the
dual quermassintegrals introduced by Lutwak \cite{Lu00}. Formally, for
a compact set $L\subseteq \mathbb{R}^n$ and $1\leq k\leq n$, the dual
affine quermassintegrals of $L$ are defined by
\begin{equation}
 \label{dual-aff-quer-1-def}
\Psi_{[k]}(L) = \left(\int_{G_{n,k}}\abs{L\cap
  E}^nd\nu_{n,k}(E)\right)^{\frac{1}{kn}}.
\end{equation}
As above, Grinberg \cite{Gr}, drawing on \cite{FT}, showed that these
enjoy invariance under volume-preserving linear transformations, i.e.
$\Psi_{[k]}(TL)= \Psi_{[k]}(L)$ for $T\in SL_{n}$.  They also satisfy
the following extension of \eqref{eqn:Busemann}, proved by
Busemann-Straus \cite{Bu} and Grinberg \cite{Gr}:
\begin{equation}
  \label{Gr-ineq}
  \Psi_{[k]}(L)\leq \Psi_{[k]}(r_L B_2^n).
\end{equation}

While the dual theory has been developed for star-shaped bodies, the
investigation of these quantities goes deeper and can be extended to
bounded Borel sets and non-negative measurable functions \cite{Gar},
\cite{DPP}. For recent developments on dual Brunn-Minkowski
theory, see \cite{S}, \cite{GarB}, \cite{HLYZ} and the references
therein.

The theory that has developed around affine and dual affine
quermassintegrals has implications outside of convex geometry.  As a
sample, we mention variants of \eqref{Gr-ineq} for functions in
\cite{DPP} lead to sharp asymptotics for small-ball probabilities for
marginal densities when independence may be lacking; small-ball
probabilities for the volume of random polytopes \cite{PP2}; bounds on
marginal densities of $\log$-concave measures connected to the Slicing
Problem \cite{PaoVal}. In these applications, the main focus was on
volumetric estimates and implications for high-dimenional probability
measures. Recently, there is increasing interest in other
probabilistic aspects of Grassmannians and flag manifolds such as
topological properties of random sets in real algebraic geometry; see
\cite{BurLer} and the references therein.

\subsection*{Towards flag manifolds}

Given the usefulness of affine and dual affine quermassintegrals, it
is worth re-visiting the role ellipsoids have played in their
development. The work of Furstenberg-Tzkoni \cite{FT} that established
the $SL_{n}$-invariance of \eqref{dual-aff-quer-1-def} for ellipsoids
went well beyond this special case. One aspect of \cite{FT} that has
received less attention is kindred integral geometric formulas for
ellipsoids on flag manifolds. They established deeper connections to
representation of spherical functions on symmetric spaces. Unlike
affine and dual affine quermassintegrals, the corresponding notions
for convex bodies, compact sets or functions have not been
investigated in the setting of flag manifolds. Our main goal is to
initiate such a study in this paper.

Flag manifolds are natural generalizations of Grassmannians in
geometry.  In convex geometry, mixed volumes admit representations in
terms of certain flag measures, e.g., \cite{HRW}.  Our work goes in a
different direction and the focus here is on flag versions of
quantities like those in \eqref{aff-quer-1-def} and
\eqref{dual-aff-quer-1-def} and corresponding extremal inequalities.
We establish fundamental properties such as affine invariance and
affine inequalities. We also treat companion approximate reverse
isoperimetric inequalities, which play an important role in
high-dimensional convex geometry and probability.

\subsection{Main results}

We start by recalling the setting from work of Furstenberg and Tzkoni
\cite{FT}. Let $ 1\leq r \leq n-1$ and let $ {\bf r}:= ( i_{1}, i_{2},
\cdots, i_{r})$ be a strictly increasing sequence of integers, $1\leq
i_{1} < i_{2} < \cdots < i_{r} \leq n-1$. Let $ \xi_{{\bf r}} := (
F_{1}, \cdots , F_{r})$ be a (partial) flag of subspaces; i.e. $ F_{1}
\subset F_{2} \subset \cdots \subset F_{r}$ with each $F_{j} $ an
$i_{j}$-dimensional subspace. We denote by $ F_{{\bf r}}^{n} $ the
flag manifold (with indices ${\bf r}$) as the set of all partial flags
$\xi_{\bf r}$. $F_{{\bf r}}^{n} $ is equipped with the unique Haar
probability measure that is invariant under the action of $SO_{n}$ and
all integrations on this set in this note are meant with respect to
that measure.

In the special case when $r=1$ and $i_{1}=k$, the partial flag
manifold $F^{n}_{{\bf r}}$ is just the Grassmann manifold
$G_{n,k}$. Hence the (partial) flag-manifolds can be considered as
generalizations of Grassmannians.  When $r=n-1$, so that $ {\bf r}:=
(1, 2, \ldots, n-1)$, we write $F^{n}:= F_{{\bf r}}^{n}$ for the
complete flag manifold. 
We follow the convention that $i_{0} = 0$ and $i_{r+1}= n$, hence
\begin{equation}
\label{basic-id-0}
\sum_{j=1}^{r} i_{j} ( i_{j+1} - i_{j-1}) = i_{r} n.
\end{equation}

Let $L$ be a compact set in $\mathbb R^{n}$ and let $ 1\leq r \leq
n-1$ and ${\bf r}$ be a set of indices as above. We define the {\it
  ${\bf r}$-flag quermassintegral} of $L$ by
\begin{equation}
  \label{r-flar-quer}
  \Phi_{{\bf r}} (L) := \left( \int_{F_{{\bf r}}^{n}} \prod_{j=1}^{r} |
  P_{F_{j}} L |^{ i_{j-1}- i_{j+1}} d\xi_{{\bf r}}
  \right)^{-\frac{1}{i_{r} n}}.
\end{equation}
Similarly, we define the {\it dual ${\bf r}$-flag quermassintegral} of
$L$ by
\begin{equation}
  \label{dual-r-flar-quer}
  \Psi_{{\bf r}} (L) := \left( \int_{F_{{\bf r}}^{n}} \prod_{j=1}^{r} | L\cap F_{j} |^{ i_{j+1}- i_{j-1}} d\xi_{{\bf r}} \right)^{\frac{1}{i_{r} n}}.
\end{equation}
In \cite{FT}, it was shown that when $L=\cal{E}$ is an ellipsoid,
$\Psi_{{\bf r}}(\cal{E})$ is invariant under $SL_{n}$. When $r=1$, the
${\bf r}$-flag quermassintegrals are exactly the affine
quermassintegrals; similarly for the dual case.  Thus the latter
quantities can be considered as extensions of the (dual) affine
quermassintegrals to flag manifolds. For complete flag manifolds, we
similarly define
\begin{equation}
  \label{fullFlag} 
  \Psi_{{\bf F^{n}}}(L) := \left( \int_{F^{n}} \prod_{i=1}^{n-1} |L
  \cap F_{i} |^{2} d \xi\right)^{\frac{1}{n(n-1)}} {\rm and } \quad
  \Phi_{{\bf F^{n}}}(L) := \left( \int_{F^{n}} \prod_{i=1}^{n-1} |
  P_{F_{i}} L |^{2} d \xi\right)^{\frac{1}{n(n-1)}} .
\end{equation}
Clearly, by (\ref{basic-id-0})
\begin{equation}
\label{homogen}
\Psi_{{\bf r}} ( \lambda L) = \lambda \Psi_{{\bf r}} ( L), \quad \quad
\ \Phi_{{\bf r} } ( \lambda L) = \lambda \Phi_{{\bf r} } ( L)\quad
(\lambda >0).
\end{equation}

Our first result extends the invariance results of Grinberg \cite{Gr}
that give invariance of \eqref{aff-quer-1-def} and
\eqref{dual-aff-quer-1-def} under volume-preserving affine and linear
transformations, respectively.

\begin{theorem}\label{thm-aff}
  Let $L$ be a compact set in $\mathbb R^{n}$, $ 1\leq r \leq n-1$ and
  $ {\bf r} := ( i_{1}, \cdots , i_{r})$ be an increasing sequence of
  integers between $1$ and $n-1$. Let $A$ be an affine map that
  preserves volume and $T\in SL_{n}$. Then
  \begin{equation}
    \label{thm-aff-1}
    \Phi_{{\bf r}} ( A L ) = \Phi_{{\bf r}} ( L ) \quad {\rm and }
    \quad \Psi_{{\bf r}} ( T L ) = \Psi_{{\bf r}} ( L ) .
  \end{equation}
\end{theorem}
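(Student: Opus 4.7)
The plan is to extend Grinberg's \cite{Gr} change-of-variables argument on Grassmannians to flag manifolds, calibrating exponents via the combinatorial identity~(\ref{basic-id-0}). First, since $d\xi_{\bf r}$ is $O(n)$-invariant and orthogonal maps preserve induced volumes on subspaces, both $\Phi_{\bf r}$ and $\Psi_{\bf r}$ are $O(n)$-invariant. Translation invariance of $\Phi_{\bf r}$ is immediate from $P_{F_j}(L+x) = P_{F_j}L + P_{F_j}x$. Writing a volume-preserving affine map as a translation composed with some $T \in SL_n$ and applying the singular value decomposition $T = U_1 D U_2$ with $U_i \in O(n)$, it suffices to verify $\Phi_{\bf r}(DL) = \Phi_{\bf r}(L)$ and $\Psi_{\bf r}(DL) = \Psi_{\bf r}(L)$ for diagonal $D \in SL_n$.

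For the dual case, introduce the Jacobian $\alpha_T(E) > 0$ of the linear bijection $T : T^{-1}E \to E$ relative to the induced Lebesgue measures, so that $|TL \cap E| = \alpha_T(E)\,|L \cap T^{-1}E|$ for any compact $L$. Substituting into $\Psi_{\bf r}(TL)^{i_r n}$ and changing variables $\xi_{\bf r} \mapsto T \xi_{\bf r}$ on $F^n_{\bf r}$, the invariance reduces to the pointwise Jacobian identity
\begin{equation*}
  \prod_{j=1}^{r} \alpha_T(F_j)^{i_{j+1} - i_{j-1}} \;=\; |\det T|^{i_r} \cdot \frac{d\bigl(T_{\ast}\, d\xi_{\bf r}\bigr)}{d\xi_{\bf r}}(\xi_{\bf r}),
\end{equation*}
valid for every $T \in GL_n$ and $\xi_{\bf r} = (F_1, \ldots, F_r) \in F^n_{\bf r}$. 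For $T \in SL_n$ the factor $|\det T|^{i_r}$ is $1$, and combined with the substitution above this yields the invariance of $\Psi_{\bf r}$.

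The counterpart for $\Phi_{\bf r}$ uses $|P_{F_j}(TL)| = \beta_T(F_j)\,|P_{G_j} L|$, where $G_j := (T^{-1}F_j^\perp)^\perp$ and $\beta_T(F_j)$ is the Jacobian of the bijection $P_{F_j} \circ T : G_j \to F_j$. Since orthogonal complementation reverses inclusions, $F_j \subset F_{j+1}$ gives $G_j \subset G_{j+1}$ with $\dim G_j = i_j$, so $\xi_{\bf r} \mapsto (G_1, \ldots, G_r)$ is a self-diffeomorphism of $F^n_{\bf r}$. Changing variables along this map reduces matters to a Jacobian identity of exactly the same shape as in the dual case, with $\alpha_T$ replaced by $\beta_T$.

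The principal obstacle is establishing the flag-manifold Jacobian identity displayed above. I would prove it by induction on $r$ using the fibration $F^n_{\bf r} \to F^n_{{\bf r}'}$ with ${\bf r}' = (i_1, \ldots, i_{r-1})$, whose fiber over $(F_1, \ldots, F_{r-1})$ is isomorphic to a Grassmannian of ambient dimension $n - i_{r-1}$. The base case $r = 1$ is the Furstenberg--Tzkoni \cite{FT} / Grinberg \cite{Gr} Grassmannian identity, where the exponent $i_2 - i_0 = n$ matches the ambient dimension. For the inductive step, the exponent $i_{r+1} - i_{r-1} = n - i_{r-1}$ at the top stage equals the ambient dimension of the fiber, so the Grassmannian identity applies there; identity~(\ref{basic-id-0}) is precisely what guarantees that the exponents telescope correctly across levels to yield the total $|\det T|^{i_r}$ factor on the right-hand side.
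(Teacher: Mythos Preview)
Your approach is essentially the same as the paper's: both proofs rest on the Furstenberg--Tzkoni Jacobian formula for the $GL_n$-action on $F^n_{\bf r}$, combined with the local scaling of sections (your $\alpha_T$) or projections (your $\beta_T$), after which the exponents cancel. A few remarks on the packaging.

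First, the SVD reduction to diagonal $D$ is superfluous and you never use it: your subsequent argument with $\alpha_T$, $\beta_T$ and the pushforward Jacobian works for arbitrary $T\in SL_n$. The paper proceeds directly without this step.

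Second, your route for $\Phi_{\bf r}$ via $G_j=(T^{-1}F_j^\perp)^\perp$ is precisely the paper's route in disguise: one has $(T^{-1}F^\perp)^\perp=T^{t}F$, so your self-diffeomorphism of $F^n_{\bf r}$ is the action of $T^t$, and your $\beta_T(F)$ is exactly $\sigma_{i_j}(T^t,F)$. The paper isolates this as Grinberg's lemma $|P_F(g^tL)|=|P_{gF}L|\,\sigma_k(g,F)$; with that in hand, the projection case reduces to the \emph{same} Jacobian formula as the section case, so there is nothing new to verify. Your phrase ``Jacobian identity of exactly the same shape'' is correct, but you should note that it is literally the same identity for $T^t$ rather than a parallel one requiring a separate inductive proof.

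Third, the paper simply quotes the flag Jacobian $\sigma_{F^n_{\bf r}}(g,\xi_{\bf r})=\prod_j\sigma_{i_j}(g,F_j)^{\,i_{j-1}-i_{j+1}}$ from \cite{FT}. Your inductive fibration sketch is a legitimate way to recover it, but be careful: $T$ does not preserve individual fibers (it sends the fiber over $F_{r-1}$ to the fiber over $TF_{r-1}$), so the ``Grassmannian identity on the fiber'' must be applied to the induced map on quotients $\mathbb{R}^n/F_{r-1}\to\mathbb{R}^n/TF_{r-1}$, and one has to check that the resulting relative determinant is $\sigma_{i_r}(T,F_r)/\sigma_{i_{r-1}}(T,F_{r-1})$ raised to the appropriate power. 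This is routine but not quite as immediate as your sketch suggests.
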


With such invariance properties, it is natural to seek extremizers of
$\Phi_{{\bf r}}(L)$ and $\Psi_{{\bf r}}(L)$, especially over convex
bodies $L \subseteq \mathbb{R}^n$. However, even for the Grassmannian
very few such results are known; cf. Lutwak's conjectured inequality
\eqref{eqn:Lutwak_conj}.  We note, however, that inequality
\eqref{eqn:Lutwak_conj} does hold at the expense of a universal
constant, as proved by the second and third-named authors \cite{PP2}.
It is easy to construct compact sets $L \subseteq \mathbb{R}^n$ of a
given volume such that $\Phi_{[k]}(L)$ is arbitrarily large. This,
however, cannot happen when $L$ is convex: in \cite{DP} it was shown
that up to a logarithmic factor in the dimension $n$, $\Phi_{[k]}(L)$
does not exceed $\Phi_{[k]}(r_L B_2^n)$.

We extend the aforementioned results to the setting of ${\bf r}$-flag
quermassintegrals. In this note $c, c^{\prime}, c_{0},\cdots$
etc. will denote universal constants (not necessarily the same at each
occurrence).

\begin{theorem}\label{thm-aff-ineq}
  Let $L$ be a compact set in $\mathbb R^{n}$, $ 1\leq r \leq n-1$ and
  ${\bf r}:= (i_{1}, \cdots , i_{r})$ an increasing sequence of integers
  between $1$ and $n-1$. Then
  \begin{equation}
    \label{ineq-0-1}
    \Psi_{{\bf r}} (L) \leq \Psi_{{\bf r}} (r_L B_2^n).
  \end{equation}
  If $L$ is a symmetric convex body, then
  \begin{equation}
    \label{ineq-0-2}
    \Psi_{{\bf r}} (L) \geq \frac{ c}{\min\left\{ \sqrt{\frac{n}{i_{r}}} ,
      \log{n}\right\} }\Psi_{{\bf r}} (r_L B_2^n).
  \end{equation} 
  If $L$ is a convex body, then
  \begin{equation}
    \label{ineq-0-3}
    \frac{1}{c} \Phi_{{\bf r}} (r_L B_2^n) \leq \Phi_{{\bf r}} (L) \leq c
    \min\left\{ \sqrt{\frac{n}{i_{r}}} , \log{n}\right\} \Phi_{{\bf r}}
    (r_L B_2^n) .
  \end{equation} 
\end{theorem}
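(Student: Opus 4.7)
The plan is to deduce each of the three inequalities from the corresponding single-Grassmannian result by exploiting the tower structure of the flag manifold. Parameterizing $F_{{\bf r}}^n$ by successively choosing $F_r\in G_{n,i_r}$, then $F_{r-1}\in G_{F_r,i_{r-1}}$, down to $F_1\in G_{F_2,i_1}$, Fubini yields
\[
\int_{F_{{\bf r}}^n} f\,d\xi_{{\bf r}} = \int_{G_{n,i_r}} \cdots \int_{G_{F_2,i_1}} f\,dF_1\cdots dF_r.
\]
For (\ref{ineq-0-1}), I would apply Grinberg's single-Grassmannian inequality iteratively from the innermost integral outward. The exponents $\alpha_j=i_{j+1}-i_{j-1}$ telescope: after integrating out $F_j$, the absorbed factor $|L\cap F_{j+1}|^{i_j}$ combines with the existing $|L\cap F_{j+1}|^{i_{j+2}-i_j}$ to give $|L\cap F_{j+1}|^{i_{j+2}}$, which primes the next step. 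Grinberg's inequality is tight for the ball at every level, so the accumulated constants cancel when dividing by the ball case, yielding (\ref{ineq-0-1}).

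For (\ref{ineq-0-2}), I would induct on $r$. The base $r=1$ is the single-Grassmannian reverse Busemann-Straus inequality for symmetric convex bodies. For the inductive step, the tower decomposition gives
\[
\Psi_{{\bf r}}(L)^{i_r n} = \int_{G_{n,i_r}} |L\cap F_r|^{n-i_{r-1}}\,\Psi_{{\bf r}'}^{F_r}(L\cap F_r)^{i_{r-1}i_r}\,dF_r, \qquad {\bf r}'=(i_1,\ldots,i_{r-1}),
\]
where $\Psi_{{\bf r}'}^{F_r}$ denotes the corresponding flag quermassintegral in $F_r$. Bounding the inner factor from below and reducing the outer integral to $\int|L\cap F_r|^n\,dF_r = \Psi_{[i_r]}(L)^{i_r n}$ brings in the single-Grassmannian reverse Busemann-Straus for $\Psi_{[i_r]}(L)$, which supplies exactly the factor $c/\min\{\sqrt{n/i_r},\log n\}$ in the statement.

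The main obstacle is keeping the inductive constant absolute: naively iterating reverse Busemann-Straus at every tower step accumulates factors of the form $(c/\min\{\sqrt{i_{j+1}/i_j},\log i_{j+1}\})^{i_j i_{j+1}/(i_r n)}$ whose product can be exponentially small in $n$, since the sum of the exponents may grow linearly. To avoid this, I would apply the forward inequality (\ref{ineq-0-1}) inductively (in ambient $F_r$) to extract the ball-case constant $C_{{\bf r}',i_r}$ from $\Psi_{{\bf r}'}^{F_r}(L\cap F_r)$, giving $\Psi_{{\bf r}}(L)\leq C_{{\bf r}',i_r}\Psi_{[i_r]}(L)$ with equality for balls, and then establish the matching reverse comparison $\Psi_{{\bf r}}(L)\geq c\,C_{{\bf r}',i_r}\Psi_{[i_r]}(L)$ with absolute $c$ for symmetric convex $L$. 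The latter can be approached by invoking the $SL_n$-invariance from Theorem \ref{thm-aff} to place $L$ in a canonical position (e.g., Milman's $M$-position), in which sections $L\cap F_r$ behave like ball sections for typical $F_r$, so that the two expressions are comparable up to absolute constants.

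Inequality (\ref{ineq-0-3}) is handled dually, with projections in place of sections. The lower bound combines the dual Grinberg tower identity with the absolute-constant reverse bound on $\Phi_{[i_r]}$ from \cite{PP2}; the upper bound uses the corresponding bound on $\Phi_{[i_r]}$ from \cite{DP}, which loses a factor of $\min\{\sqrt{n/i_r},\log n\}$, propagated through the tower in the same way.
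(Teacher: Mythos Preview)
Your approach to \eqref{ineq-0-1} is exactly the paper's: iterate the Busemann--Straus--Grinberg inequality through the tower, using the telescoping of the exponents $i_{j+1}-i_{j-1}$. This part is fine.

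For \eqref{ineq-0-2} and \eqref{ineq-0-3}, however, there is a genuine gap. You correctly identify that iterating single-Grassmannian reverse inequalities through the tower accumulates constants whose product is not absolute. Your proposed fix---pass to $M$-position so that ``sections $L\cap F_r$ behave like ball sections for typical $F_r$''---does not work as stated. Milman's $M$-ellipsoid gives only $|{\cal E}\cap F|\le |L\cap F|\le e^{cn}|{\cal E}\cap F|$, and when these factors are inserted into $\prod_j |L\cap F_j|^{i_{j+1}-i_{j-1}}$ you pick up $e^{cn\sum_j(i_{j+1}-i_{j-1})}=e^{cn(n+i_r-i_1)}$ inside the integrand; after the $1/(i_r n)$ root this is $e^{c(n+i_r-i_1)/i_r}$, which is \emph{not} an absolute constant. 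The same issue undermines your ``propagated through the tower'' argument for $\Phi_{{\bf r}}$: reducing $\Phi_{{\bf r}}(L)$ to $\Phi_{[i_r]}(L)$ with an absolute constant is itself the hard step, and neither $M$-position nor iteration supplies it.

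The paper avoids the tower entirely for the reverse inequalities. The key device is a flag-level Blaschke--Santal\'o relation (Proposition~\ref{prop:BS_flag}): applying Santal\'o and Bourgain--Milman to each section $L\cap F_j$ inside the integrand, together with the identity $(L\cap F)^{\circ}=P_F L^{\circ}$, gives
\[
c\,\Phi_{{\bf r}}(B_2^n)\Psi_{{\bf r}}(B_2^n)\le \Phi_{{\bf r}}(L^{\circ})\Psi_{{\bf r}}(L)\le \Phi_{{\bf r}}(B_2^n)\Psi_{{\bf r}}(B_2^n)
\]
directly, with no iteration. The lower bound in \eqref{ineq-0-3} then follows immediately from \eqref{ineq-0-1} applied to $L^{\circ}$. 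For the upper bound in \eqref{ineq-0-3} the paper again does not reduce to $\Phi_{[i_r]}$: it proves $\Phi_{{\bf r}}(L)\le \Phi_{{\bf r}}(B_2^n)\,W(L)$ by iterated Urysohn and then bounds $W_L$ via Pisier's inequality (giving the $\log n$), and separately bounds each $|P_{F_j}L|$ pointwise after passing to a Klartag perturbation with bounded isotropic constant (giving the $\sqrt{n/i_r}$). Finally \eqref{ineq-0-2} is deduced from \eqref{ineq-0-3} by the same Santal\'o link applied to $L^{\circ}$.
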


Further drawing on \cite{FT}, we also consider variants of ${\bf
  r}$-flag (dual) affine quermassintegrals involving permutations $\omega$ of
$\{1,\ldots,n\}$. We define the {\it $\omega$-flag quermassintegral}
and {\it $\omega$-flag dual quermassintegral} as follows: for every
compact set $L$ in $\mathbb R^{n}$,
\begin{equation}
\Phi_{\omega} (L) := \begin{cases} \left( \int_{{\bf F^{n}}}
  \prod_{j=1}^{n-1} | P_{F_{j}}L|^{- \omega(j) + \omega(j+1)-1} d \xi
  \right)^{-\frac{1}{n( n- \omega(n))}} &\ {\rm if } \ \omega(n) \neq
  n,\\
 \int_{{\bf F^{n}}} \prod_{j=1}^{n-1} | P_{F_{j}}L|^{-
  \omega(j) + \omega(j+1)-1} d \xi &\ {\rm if } \ \omega(n) = n.
 \end{cases}
\end{equation}
and 
\begin{equation}
\Psi_{\omega} (L) := \begin{cases}\left( \int_{{\bf F^{n}}}
  \prod_{j=1}^{n-1} | L \cap F_{j}|^{ \omega(j) - \omega(j+1)+1} d \xi
  \right)^{\frac{1}{ n(n- \omega(n))}} &\ {\rm if} \ \omega(n) \neq
  n,\\ \int_{{\bf F^{n}}} \prod_{j=1}^{n-1} | L \cap F_{j}|^{ \omega(j) -
    \omega(j+1)+1} d \xi &\ {\rm if } \ \omega(n) = n.
  \end{cases}
\end{equation}
Furstenberg and Tzkoni showed $SL_{n}$-invariance of $\Psi_{\omega}$
for ellipsoids. We investigate the extent to which this invariance
carries over to compact sets.  Moreover, in the case of convex bodies
we show that such quantities cannot be too degenerate in the sense
that they admit uniform upper and lower bounds, independent of the
body. We apply V. Milman's $M$-ellipsoids \cite{Mil88}, together with
the aforementioned $SL_n$-invariance of Furstenberg-Tzkoni to
establish these bounds (see Corollary \ref{M-FT}).

In \S \ref{section:functions}, we introduce functional analogues of
the ${\bf r}$-flag dual affine quermassintegrals. We show that more
general quantities share the $SL_{n}$-invarinace properties and we
prove sharp isoperimetric inequalities. In this section, we invoke
techniques and results from our previous work \cite{DPP}. Lastly, in
\S \ref{section:functions} we also introduce a functional form of
${\bf r}$-flag affine quermassintegrals. There is much recent interest
in extending fundamental geometric inequalities from convex bodies to
certain classes of functions, e.g., \cite{KlM}, \cite{BCF},
\cite{MR}. The latter authors have studied variants of inequalities
for intrinsic volumes, or even mixed volumes, and other general
quantities; for example, they establish functional analogues of
\eqref{eqn:Alexandrov}.  Of course, for functions one cannot hope for
a sharp analogue of \eqref{eqn:Lutwak_conj}, as this is open even for
affine quermassintegrals of convex bodies. On the other hand, we
establish a general functional result at the expense of a universal
constant. Invariance properties and bounds for these quantities are
treated in \S \ref{sub:func}.

\section{Affine invariance}

In this section we will present the proof of Theorem \ref{thm-aff}. The following proposition relates integration on a flag manifold to integration on nested Grassmannians, (see \cite{SW}
Theorem 7.1.1 on p. 267 for such a result for flags of elements consisting of two subspaces). Since we will use this fact many times throughout this paper, we include the proof. For a subspace $F \subset \mathbb{R}^n$, we denote by $G_{F,i}$ the Grassmannian of all $i$-dimensional subspaces contained in $F$.

\begin{proposition}
Let $ 1\leq r \leq n-1$ and	$ {\bf r} := ( i_{1}, \cdots , i_{r})$ be an increasing sequence of integers between $1$ and $n-1$. For $G \in L^1(F^n_{\bf r})$,
  \begin{equation}\label{basic-flags-1}
    \int\limits_{F^n_{\bf r}} G (\xi_{\bf r}) d\xi_{\bf r} = \int\limits_{G_{n,i_{r}} }
    \int\limits_{G_{F_{r}, i_{r-1}}} \cdots \int\limits_{G_{ F_{2}},
      i_{1}} G ( F_{1}, \cdots , F_{r}) \, d F_{1}
    \cdots d F_{r-1} dF_{r}.
  \end{equation}
\end{proposition}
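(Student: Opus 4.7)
The plan is to identify the right-hand side of (\ref{basic-flags-1}) with a Radon probability measure on $F^n_{\bf r}$, verify that it is invariant under the action of $SO_n$, and then invoke uniqueness of the Haar probability measure on the homogeneous space $F^n_{\bf r} \cong SO_n / (SO_{i_1} \times SO_{i_2 - i_1} \times \cdots \times SO_{n - i_r})$. An equivalent route is induction on $r$, using the fibration $(F_1, \ldots, F_r) \mapsto F_r$ from $F^n_{\bf r}$ onto $G_{n,i_r}$ whose fiber over $F_r$ is the flag manifold $F^{F_r}_{(i_1,\ldots,i_{r-1})}$ inside $F_r$; both viewpoints reduce to the same invariance verification.

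First, I would note that each flag $(F_1, \ldots, F_r) \in F^n_{\bf r}$ is uniquely encoded by the nested sequence of choices $F_r \in G_{n,i_r}$, $F_{r-1} \in G_{F_r, i_{r-1}}$, $\ldots, F_1 \in G_{F_2, i_1}$. For each subspace $F$ of dimension $m$, the Grassmannian $G_{F,i}$ carries a canonical $SO(F)$-invariant Haar probability measure (the one induced by identifying $F$ with $\mathbb{R}^m$ via any orthonormal basis, which is unambiguous by uniqueness). Denote by $\mu$ the probability measure on $F^n_{\bf r}$ defined by the iterated integral on the right-hand side. By Fubini and the measurability of the choice of Haar measure in $F_j$, $\mu$ is a well-defined Borel probability measure.

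Next, I would verify $SO_n$-invariance of $\mu$. For $g \in SO_n$, the outermost integration over $G_{n,i_r}$ is invariant by the $SO_n$-invariance of $d\nu_{n,i_r}$. For the inner integrations, the key observation is that for any $g \in SO_n$, the isometry $g : F_j \to g F_j$ pushes the canonical Haar probability measure on $G_{F_j, i_{j-1}}$ forward to the canonical Haar probability measure on $G_{g F_j, i_{j-1}}$, since both are the unique probability measure invariant under the orthogonal group of an $i_j$-dimensional Euclidean space acting on its $i_{j-1}$-planes. Chaining these pushforward identities through the iterated integral shows that $\mu$ is invariant under the diagonal $SO_n$-action on $F^n_{\bf r}$. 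Since the Haar probability measure $d\xi_{\bf r}$ is the unique $SO_n$-invariant probability measure, $\mu = d\xi_{\bf r}$, which is (\ref{basic-flags-1}).

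The main obstacle is the bookkeeping around the induced Haar measures on the $G_{F_j, i_{j-1}}$: one must be careful that these are defined intrinsically in terms of the Euclidean structure inherited from $\mathbb{R}^n$, in a way that is measurable in $F_j$, so that successive Fubini applications are legitimate. Once this is in place, the argument is formal and boils down to the uniqueness of Haar measure on compact homogeneous spaces.
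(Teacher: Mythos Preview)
Your proposal is correct and follows essentially the same approach as the paper: define the iterated integral as a probability measure on $F^n_{\bf r}$, verify its $SO_n$-invariance via the pushforward identity $g_* \mu_{G_{F_j,i_{j-1}}} = \mu_{G_{gF_j,i_{j-1}}}$ chained through the nested integrals together with $SO_n$-invariance of the outermost Grassmannian measure, and conclude by uniqueness of the Haar probability measure on $F^n_{\bf r}$. The paper carries out the invariance check by an explicit change of variables $(F_1,\ldots,F_r)\mapsto g\cdot(F_1,\ldots,F_r)$, which is exactly your pushforward argument written out step by step.
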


For simplicity, we have suppressed the notation to write $dF_1$ rather
than $d\mu_{G_{F_2,i_1}}(F_1)$; similarly for all other indices. This
convention will be used throughout.

\begin{proof}
Fix $i_j$. Denote by $SO(F_{j})$ the subgroup of $SO_n$ acting
transitively on $G_{F_{j},{i_{j-1}}}$. For example, if
$F_o=\mathop{\rm span}\{e_1, \dots, e_{i_j}\}$ and $E_o=\mathop{\rm span}\{e_1,
\dots, e_{i_{j-1}}\}$, then elements of $SO(F_o)$ are given by
\[ \left( \begin{array}{cc} SO_{i_j} & 0  \\ 0 & I_{n-i_j} \end{array} \right) .\]
And the stabilizer of $E_o$ in $SO(F_o)$ is
\[ \left( \begin{array}{ccc} SO_{i_{j-1}} & 0 & 0 \\ 0 & SO_{i_j - i_{j-1}} & 0 \\ 0 & 0 & I_{n-i_j} \end{array} \right) .\]
The measure $\mu_{G_{F_j},i_{j-1}}$ is invariant under $SO(F_{j})$. Further, for $g\in SO_n$ and a Borel subset $A \subset G_{F_j,i_{j-1}}$ we have $ \mu_{G_{g F_j,i_{j-1}}} (gA) = \mu_{G_{F_j,i_{j-1}}} (A)$.
	
We will show that both integrals are invariant under the action of $SO_n$. Fix $g\in SO_n$. We start with the integral on the right-hand side of (\ref{basic-flags-1}):
	
\begin{align*}
  \int_{G_{n,i_r} } & \int_{G_{ F_r, i_{r-1} }} \cdots \int_{G_{ F_2,i_1 }} G (g^{-1} \cdot(F_{1}, \cdots, F_{r})) \, d F_1\cdots dF_{r-1} dF_r \\ 
  &= \int_{G_{n,i_r} } \int_{G_{ g F_r, i_{r-1} }} \cdots \int_{G_{ g F_2, i_1 }} G (F_{1}, \cdots, F_{r}) \, d (g F_1)\cdots d (g F_{r-1}) d (g F_r) \\ 
  &= \int_{G_{n,i_r} } \int_{G_{g F_r, i_{r-1} }} \cdots \int_{G_{ F_2, i_1 }} G (F_{1}, \cdots,
  F_{r}) \, dE_1\cdots d (g F_{r-1}) d (g F_r) \\ 
  &= \cdots \\ 
  &=\int_{G_{n,i_r} } \int_{G_{ F_r, i_{r-1} }} \cdots \int_{G_{ F_2, i_1 }} G (F_{1}, \cdots, F_{r}) \, d F_1\cdots d F_{r-1} d F_r,
\end{align*}
where we have sent $(F_{1}, \cdots , F_{r}) \to g \cdot(F_{1}, \cdots, F_{r})$ and then used the invariance property 
$$\mu_{G_{g F_j, i_{j-1}}} (gA) = \mu_{G_{F_j, i_{j-1}}} (A)$$ 
for all $r-1$ inner integrals, for the outer integral we use the $SO_n$-invariance of the measure $\mu_{G_{n,i_r}}$. Note that at each step $(F_{1}, \cdots, F_{r})$ remains an element of $F^n_{\bf r}$, this is to say that the inclusion relation is preserved. The invariance of the integral on the
left-hand side of (\ref{basic-flags-1}) is a consequence of the $SO_n$-invariance of the measure $\mu_{F^n_{\bf r}}$. The proposition now follows by the uniqueness of the $SO_n$-invariant probability measure on $F^n_{\bf r}$, see for example \S 13.3 in \cite{SW}.
\end{proof}

The following fact allows one to view an integral of a function on a
partial flag as an integral over the full flag manifold. In this case,
to avoid confusion, the subspaces of flag manifolds are indexed by
their dimension.

\begin{proposition}  
Let $ 1\leq r \leq n-1$ and	$ {\bf r} := ( i_{1}, \cdots , i_{r})$ be an increasing sequence of integers between $1$ and $n-1$. For a function $ G $ on the partial flag $F_{{\bf r}}^{n}$, denote by $\wtd{G}$ its trivial extension to the full flag manifold $F^{n}$, i.e., $\wtd{G}(F_1, \ldots, F_{n-1}) := G(F_{i_1}, \ldots, F_{i_r})$. Then
\begin{equation}\label{inv-proj}
	\int_{F^{n}} \wtd{G}( \eta) d \eta = \int_{F_{{\bf r}}^{n}} G(\xi_{\bf r}) d \xi_{\bf r}.
\end{equation}
\end{proposition}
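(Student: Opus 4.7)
My plan is to exhibit the partial-flag measure as the pushforward of the full-flag measure under the natural forgetful map, and to conclude via the uniqueness of the $SO_n$-invariant probability measure, which has already been invoked in the preceding proposition. Concretely, define $\pi : F^{n} \to F^{n}_{\bf r}$ by $\pi(F_1,\ldots,F_{n-1}) := (F_{i_1},\ldots,F_{i_r})$. Since $\wtd{G} = G\circ \pi$ by the very definition of the trivial extension, the change-of-variables formula rewrites the desired identity \eqref{inv-proj} as
\begin{equation*}
\pi_{\ast}\mu_{F^{n}} = \mu_{F^{n}_{\bf r}}
\end{equation*}
as Borel probability measures on $F^{n}_{\bf r}$, and this is what I would prove.

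To establish this measure identity I would observe that $\pi$ is continuous and $SO_n$-equivariant: the $SO_n$-action on a flag rotates every subspace simultaneously, so $\pi(g \cdot (F_1,\ldots,F_{n-1})) = g \cdot \pi(F_1,\ldots,F_{n-1})$ for all $g \in SO_n$. Hence $\pi_{\ast}\mu_{F^{n}}$ is an $SO_n$-invariant Borel probability measure on $F^{n}_{\bf r}$, and by the uniqueness of such a measure (the same fact cited from \cite{SW} in the proof of the previous proposition) it must coincide with $\mu_{F^{n}_{\bf r}}$, finishing the argument.

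I do not foresee a substantive obstacle: the main conceptual input, uniqueness of the invariant probability measure, is already on the table. For transparency I would briefly note an alternative, more computational route: apply Proposition 2.1 to both sides of \eqref{inv-proj} and then integrate out the intermediate subspaces $F_j$ with $j \notin \{i_1,\ldots,i_r\}$. Since $\wtd{G}$ is constant in every such $F_j$, for each gap $i_k < j < i_{k+1}$ iterated integration of $1$ against the Haar probability measures on the nested Grassmannians $G_{F_{j+1}, j}$ yields $1$ by Fubini, matching the two iterated integral representations term by term. This proves the same identity but is noticeably heavier in bookkeeping than the pushforward/uniqueness argument, so I would present the latter.
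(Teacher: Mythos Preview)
Your main argument via the forgetful map $\pi:F^{n}\to F^{n}_{\bf r}$ and the pushforward $\pi_{\ast}\mu_{F^{n}}$ is correct: $SO_n$-equivariance of $\pi$ forces $\pi_{\ast}\mu_{F^{n}}$ to be an $SO_n$-invariant Borel probability on $F^{n}_{\bf r}$, and uniqueness (the same fact from \cite{SW} already used in the preceding proposition) identifies it with $\mu_{F^{n}_{\bf r}}$. The change-of-variables identity $\int_{F^{n}}G\circ\pi\,d\mu_{F^{n}}=\int_{F^{n}_{\bf r}}G\,d(\pi_{\ast}\mu_{F^{n}})$ then yields \eqref{inv-proj} immediately.

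The paper does \emph{not} argue this way; it follows exactly what you describe as the ``alternative, more computational route.'' It applies the nested-Grassmannian representation \eqref{basic-flags-1} to the full-flag integral and then eliminates each intermediate level $j\notin\{i_1,\ldots,i_r\}$ via the identity
\[
\int_{G_{F_{j+1},j}}\int_{G_{F_j,j-1}} f(F_{j-1})\,dF_{j-1}\,dF_j=\int_{G_{F_{j+1},j-1}} f(F_{j-1})\,dF_{j-1},
\]
iterating until only the indices $i_1,\ldots,i_r$ remain, and then invoking \eqref{basic-flags-1} once more for the partial flag. Your pushforward argument is shorter and conceptually cleaner, at the (minor) cost of appealing to the abstract uniqueness theorem a second time; the paper's approach is self-contained and hands-on but requires the bookkeeping you anticipated. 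Either proof is perfectly adequate here.
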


\begin{proof} 
We ``integrate out'' the Grasmannians that do not contain subspaces
that $G$ depends on by repeatedly using the identity
$$ \int_{G_{ F_{j+1}, j}} \int_{G_{ F_j, {j-1}}} f(F_{j-1}) d F_{j-1}
d F_{j} = \int_{G_{ F_{j+1}, j-1}} f(F_{j-1}) dF_{j-1}. $$ On the
right-hand side, we integrate over the set of all $(j-1)$-dimensional
subspaces in the ambient $(j+1)$-dimensional space. On the left-hand
side we integrate over the same set of planes stepwise, we step from
one $j$-dimensional subspace in the ambient $(j+1)$-dimensional space to the next and in each such subspace we consider all $(j-1)$-dimensional subspaces. The above identity holds
since we are using probability measures on each nested Grassmannian.
Applying the latter iteratively, we get
\begin{align*}
  & \int_{F^{n}} \wtd{G}( \eta) d \eta \\ 
  &= \int_{G_{n,n-1} } \int_{G_{ F_{n-1}, n-2 }} \cdots \int_{G_{ F_2, 1 }} \wtd{G} (F_{1},
  \cdots , F_{n-1}) d F_1\cdots d F_{n-2} d F_{n-1} \\ 
  &= \int_{G_{n,n-1} } \int_{G_{ F_{n-1}, n-2 }} \cdots \int_{G_{ F_2, 1}} 
  G (F_{i_1}, \ldots, F_{i_r}) dF_1\cdots d F_{n-2} d F_{n-1} \\ 
  & = \int_{G_{n,n-1} } \cdots \int_{G_{ F_{i_1+1}, i_1 }} G (F_{i_1}, \ldots, F_{i_r}) \left( \int_{G_{F_{i_1},i_1-1}} \cdots \int_{G_{F_2,1}} dF_1 \ldots
  dF_{i_1-1} \right) d F_{i_1} \cdots dF_{n-1} \\ 
  &=  \int_{G_{n,n-1} } \cdots \int_{G_{ F_{i_1+1}, i_1 }} G (F_{i_1}, \ldots, F_{i_r}) d F_{i_1} \cdots dF_{n-1} \\ 
  &= \int_{G_{n,n-1} } \cdots \left( \int_{G_{ F_{i_2}, i_2-1 }} \cdots \int_{G_{ F_{i_1+1}, i_1}} G(F_{i_1}, \ldots, F_{i_r}) d F_{i_1} \cdots dF_{i_2-1} \right)
  \cdots d F_{n-1} \\ 
  &= \int_{G_{n,n-1} } \cdots \left( \int_{G_{F_{i_2}, i_1 }} G (F_{i_1}, \ldots, F_{i_r}) d F_{i_1} \right) \cdots dF_{n-1} \\ 
  &= \cdots \\ 
  &= \int_{G_{n,i_r} } \int_{G_{F_{i_r}, i_{r-1} }} \cdots \int_{G_{ F_{i_2}, i_1 }} G (F_{i_1}, \cdots, F_{i_r}) d F_{i_1}\cdots dF_{i_{r-1}}d F_{i_r} \\ 
  &=\int_{F_{{\bf r}}^{n}} G(\xi_{\bf r}) d \xi_{\bf r}.
\end{align*}	
\end{proof}

%

We now turn to the invariance properties of the functionals
$\Phi_{{\bf r}}$ and $\Psi_{{\bf r}}$. Although self-contained proofs
are possible, they require somewhat involved machinery. Since all of the
ingredients are available in the literature \cite{FT, Gr, DPP}, we
have chosen to gather the essentials without proofs. 

For readers less familiar with the relevant work, we will explain the main points
behind the affine invariance of the functionals $\Phi_{{[k]}}(K)$ and
$\Psi_{{[k]}}(K)$ along the way. There are two important changes of
variables: a `global' change of variables on the Grassmannian
$G_{n,k}$ or the flag manifold $F^n_{\bf r}$ and a `local' change of
variables on each element $F\in G_{n,k}$ or $\xi_{\bf r} \in F^n_{\bf r}$.

Let $g\in SL_n$, $F\in G_{n,k}$ and $A\subset F$ be a
full-dimensional Borel set, then $|g A| = |\det(g|_F)| |A|$. This
determinant of the transformation $g$ restricted to the subspace $F$,
$\det(g|_F)$, is the Jacobian in the following change of variables:
\begin{equation}\label{eq_lcv}
\int_{gF} f(g^{-1}t) dt = \int_F f(t) |\det(g|_F)| dt.
\end{equation}
Denote it as in \cite{FT} by $\sigma_k(g,F):= |\det(g|_F)| = \frac{|g
  A|}{|A|}$.

For the relevant manifolds $M$ considered in this paper, denote by
$\sigma_M(g,F)$ the Jacobian determinant in the following change of
variables:
\begin{equation}\label{eq_gcv}
\int_M f(F) dF = \int_M f(gF) \sigma_M(g,F) dF . 
\end{equation}
Furstenberg and Tzkoni proved in \cite{FT} that
 \begin{equation}
 \label{G-S-03}
  \sigma_{G_{n,k}}( g, F) = \sigma_{k}^{-n} (g,F)
 \end{equation}
 and 
\begin{equation}
\label{F-S-02} 
\sigma_{F_{{\bf r}}^{n} }(g, \xi_{\bf r}) = \sigma_{i_{1}}^{-i_{2}}(g, F_1)
\sigma_{i_{2}}^{i_{1}-i_{3}}(g, F_2) \cdots \sigma_{i_{r}}^{i_{r-1}-n}(g, F_r) ,
\end{equation}
where ${\bf r}:= (i_{1}, \cdots , i_{r})$.  The linear-invariance of
the dual affine quermassintergals $ \Psi_{[k]} $ now follows
immediately. Indeed, for $g\in SL_n$
\begin{align*}
\Psi^{kn}_{[k]}(gL) &=\int_{G_{n,k}} |gL\cap F|^n dF = \int_{G_{n,k}}
|gL\cap gF|^n \sigma_{G_{n,k}}(g,F) dF \\ &= \int_{G_{n,k}}
\left(\sigma_{k}(g,F) |L\cap F|\right)^n \sigma_{k}^{-n} (g,F) dF =
\Psi^{kn}_{[k]}(L),
\end{align*}
where we have used (\ref{eq_gcv}), (\ref{eq_lcv}) with $f=1_L$ and
(\ref{G-S-03}). Now we turn toward the proof of Theorem
\ref{thm-aff}. We start with the case of dual ${\bf r}$-flag
quermassintegrals.

\begin{proposition}
Let $ 1\leq r \leq n-1$ and	$ {\bf r} := ( i_{1}, \cdots , i_{r})$ be an increasing sequence of	integers between $1$ and $n-1$.
For every compact set $L$ in $\mathbb R^{n}$ and every $g \in SL_{n}$,
\begin{equation}
\label{sln-inv}
\Psi_{{\bf r}} (g L) = \Psi_{{\bf r}} (L)   .
\end{equation}
\end{proposition}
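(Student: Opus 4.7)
The plan is to mimic exactly the computation for $\Psi_{[k]}$ shown in the excerpt, but now carrying along a product over $j=1,\ldots,r$ of local Jacobians and matching them against the flag-manifold Jacobian from \eqref{F-S-02}.

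First I would write out
\[
\Psi_{{\bf r}}^{i_r n}(gL) = \int_{F^n_{\bf r}} \prod_{j=1}^{r} |gL\cap F_j|^{i_{j+1}-i_{j-1}} \, d\xi_{\bf r},
\]
and apply the global change of variables \eqref{eq_gcv} on $M = F^n_{\bf r}$, which sends $\xi_{\bf r} = (F_1,\ldots,F_r)$ to $g\xi_{\bf r} = (gF_1,\ldots,gF_r)$ and produces the Jacobian factor $\sigma_{F^n_{\bf r}}(g,\xi_{\bf r})$. This turns each $|gL\cap F_j|$ into $|gL \cap gF_j| = |g(L\cap F_j)|$.

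Next I would apply the local change of variables \eqref{eq_lcv} on each subspace $F_j$ (taking $f = \mathbf{1}_{L\cap F_j}$), which gives $|g(L\cap F_j)| = \sigma_{i_j}(g,F_j)\,|L\cap F_j|$. Substituting, the integral becomes
\[
\int_{F^n_{\bf r}} \prod_{j=1}^{r} |L\cap F_j|^{i_{j+1}-i_{j-1}} \cdot \prod_{j=1}^{r} \sigma_{i_j}(g,F_j)^{\,i_{j+1}-i_{j-1}} \cdot \sigma_{F^n_{\bf r}}(g,\xi_{\bf r}) \, d\xi_{\bf r}.
\]

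The crux is then a purely algebraic cancellation: by the Furstenberg–Tzkoni formula \eqref{F-S-02} (rewritten with the convention $i_0=0$, $i_{r+1}=n$),
\[
\sigma_{F^n_{\bf r}}(g,\xi_{\bf r}) = \prod_{j=1}^{r} \sigma_{i_j}(g,F_j)^{\,i_{j-1}-i_{j+1}},
\]
so the two Jacobian products have exponents summing to zero at each $j$, and together they reduce to $1$. What remains is exactly $\Psi_{{\bf r}}^{i_r n}(L)$, which yields \eqref{sln-inv}.

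I do not foresee a real obstacle here; the work is essentially bookkeeping. The one point that needs a little care is to verify that the exponent pattern $-i_2,\ i_1-i_3,\ \ldots,\ i_{r-1}-n$ in \eqref{F-S-02} really does match $i_{j-1}-i_{j+1}$ under the conventions $i_0=0$ and $i_{r+1}=n$, so that the cancellation is exponent-by-exponent rather than only in some aggregate sense. Once that is checked, the proof is complete and runs parallel to the $\Psi_{[k]}$ argument already laid out before the proposition.
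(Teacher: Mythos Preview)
Your proposal is correct and follows essentially the same approach as the paper: apply the global change of variables \eqref{eq_gcv} on $F^n_{\bf r}$, use the local identity $|g(L\cap F_j)|=\sigma_{i_j}(g,F_j)|L\cap F_j|$, and cancel against the Furstenberg--Tzkoni Jacobian \eqref{F-S-02}. The paper presents the same computation with a slight cosmetic difference---it first rewrites $\sigma_{F^n_{\bf r}}(g,\xi_{\bf r})$ as the ratio $\prod_j |L\cap F_j|^{i_{j+1}-i_{j-1}}/|gL\cap gF_j|^{i_{j+1}-i_{j-1}}$ before substituting---but the logic and the key cancellation are identical.
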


\begin{proof}
Let us start by expressing $\sigma_{F_{{\bf r}}^{n} }(g, \xi_{\bf r})$ in terms of
sections. For this note that
$$ \sigma_{i_{j}}(g, F_{j}) = \frac{ |g( L \cap F_{j})|}{ | L\cap
  F_{j}|}, $$ where as a subset of $F_{j}$ we use the section $ L \cap
F_{j}$. By (\ref{F-S-02}) with $i_0=0$ and $i_{r+1}=n$, we have
$$ \sigma_{F_{{\bf r}}^{n} } ( g, \xi_{\bf r}):= \prod_{j=1}^{r}
\sigma_{i_{j}}^{-i_{j+1}+ i_{j-1}} (g, F_{j})=\prod_{j=1}^{r} 
\frac{ |L \cap F_{j}|^{ i_{j+1}- i_{j-1}} }{ |gL \cap g F_{j}|^{ i_{j+1}-i_{j-1}}} . $$ 
Using the change of variables (\ref{eq_gcv}) with
the above expression for $\sigma_{F_{{\bf r}}^{n} }$, yields
\begin{align*}
\Psi_{{\bf r}}^{ni_{r}} ( gL) 
&= \int_{F_{\bf r}^{n}} \prod_{j=1}^{r} | gL \cap F_{j}|^{ i_{j+1}- i_{j-1}} d\xi_{\bf r} \\ 
&= \int_{F_{\bf r}^{n}} \prod_{j=1}^{r} | gL \cap g F_{j}|^{ i_{j+1}- i_{j-1}}
\sigma_{F_{{\bf r}}^{n}} (g,\xi_{\bf r}) d\xi_{\bf r} \\ 
&= \int_{F_{\bf r}^{n}} \prod_{j=1}^{r} | gL \cap g F_{j}|^{ i_{j+1}- i_{j-1}}\prod_{j=1}^{r} \frac{ | L \cap F_{j}|^{ i_{j+1}- i_{j-1}} }{ |gL \cap g F_{j}|^{i_{j+1}- i_{j-1}}} d\xi_{\bf r} \\ 
&=\int_{F_{\bf r}^{n}} \prod_{j=1}^{r} | L \cap F_{j}|^{ i_{j+1}- i_{j-1}} d\xi_{\bf r} \\ 
&=\Psi_{{\bf r}}^{ni_{r}} (L) .
\end{align*}
\noindent This proves \eqref{sln-inv}. 
\end{proof}

To recall the linear-invariance of the operator $\Phi_{[k]}$, we again
follow Grinberg \cite{Gr}. Observe that for $F\in G_{n,k}$ and $g\in
SL_n$ upper-triangular with respect to the decomposition
$\mathbb{R}^n=F+F^{\perp}$, we have
$$ |P_F ( g^t L )| = |g P_F L| = |\det(g|_F)| |P_F L| = \sigma_k(g,F)
|P_F L|. $$ 
While for $l\in SO_n$, we have $P_F(l^t L)=P_{l F}(L)$. Since
any $g\in SL_n$ can be written as a product of a rotation and an
upper-triangular matrix, combining the two observations yields the following.

\begin{lemma}[\cite{Gr}]
\noindent Let $L$ be a compact set in $\mathbb R^{n}$, $F\in G_{n,k}$ and $g \in SL_{n}$. Then
\begin{equation}
\label{multiplier-grinberg}
| P_{F} (g^t L)| = |P_{gF} L| \sigma_{k}(g,F) .
\end{equation}
\end{lemma}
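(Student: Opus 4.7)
The plan is to reduce the identity to the two special cases laid out just before the lemma: rotations $l \in SO_n$, where $|P_F(l^tL)| = |P_{lF}L|$, and ``$F$-preserving upper-triangular'' maps $u$, where $|P_F(u^tL)| = \sigma_k(u,F)|P_FL|$. The bridge between the two is a careful choice of factorization $g = lu$ that keeps $u$ compatible with the decomposition $\mathbb{R}^n = F \oplus F^\perp$.

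First I would use the $SO_n$-transitivity of $G_{n,k}$ to pick $l \in SO_n$ with $l(F) = g(F)$, and then set $u := l^{-1}g$, so that $u(F) = l^{-1}(gF) = F$. This ensures $u$ has a block form
$$u = \begin{pmatrix} A & B \\ 0 & C \end{pmatrix}, \qquad A = u|_F,$$
relative to $\mathbb{R}^n = F \oplus F^\perp$, which is precisely the upper-triangular regime already treated. Note that this specific decomposition is forced by the geometry; a generic QR-type factorization in a fixed basis would not preserve the block structure tied to $F$.

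Next I would write $g^t = u^t l^t$ and apply the two observations sequentially. The upper-triangular case applied to $u$ acting on the compact set $l^t L$ gives $|P_F(g^t L)| = \sigma_k(u,F) |P_F(l^t L)|$, and the rotation case gives $|P_F(l^t L)| = |P_{lF} L| = |P_{gF} L|$ since $l F = g F$. Combining these yields $|P_F(g^t L)| = \sigma_k(u,F) |P_{gF} L|$.

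The last step is to identify $\sigma_k(u, F)$ with $\sigma_k(g, F)$: for any full-dimensional Borel set $X \subseteq F$, we have $g(X) = l(u(X))$ with $u(X) \subseteq F$, and since $l$ is an isometry, $|g(X)| = |u(X)| = |\det A|\,|X|$. By the definition $\sigma_k(g, F) = |g(X)|/|X|$, this gives $\sigma_k(g,F) = |\det A| = \sigma_k(u,F)$, completing the argument. The only genuine subtlety is the choice of $l$ matching $g$ on subspaces; once that is in place, the rest is block-matrix bookkeeping and tracking Jacobians.
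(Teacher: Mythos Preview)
Your proof is correct and follows exactly the approach sketched in the paper: factor $g=lu$ with $l\in SO_n$ and $u$ upper-triangular relative to $F\oplus F^{\perp}$ (equivalently $u(F)=F$), then apply the two observations preceding the lemma in sequence. Your explicit verification that $\sigma_k(u,F)=\sigma_k(g,F)$ via the isometry $l$ is the one detail the paper leaves implicit, and you have handled it correctly.
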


\noindent The linear-invariance of the affine quermassintergals $
\Phi_{[k]} $ can now be seen as follows: let $g\in SL_n$
$$ \Phi^{-kn}_{[k]}(g^t L)=\int_{G_{n,k}} | P_{F} (g^t L)|^{-n} dF = \int_{G_{n,k}} |P_{gF} L|^{-n} \sigma_{k}^{-n} (g,F) dF = \Phi^{-kn}_{[k]} (L), $$
where we have used (\ref{multiplier-grinberg})  and (\ref{eq_gcv}) taking into account (\ref{G-S-03}).

\begin{proposition}
Let $ 1\leq r \leq n-1$ and $ {\bf r} := ( i_{1}, \cdots , i_{r})$ be
an increasing sequence of integers between $1$ and $n-1$.  Let $A$ be
an affine volume preserving map in $\mathbb R^{n}$. Then for every
compact set $L$ in $\mathbb R^{n}$,
\begin{equation}
\label{aff-inv}
\Phi_{{\bf r}} (AL) = \Phi_{{\bf r}} (L) .
\end{equation}
\end{proposition}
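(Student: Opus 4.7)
The plan is to split an affine volume-preserving map $A$ as $Ax = gx + v$ with $g$ a linear map of determinant $\pm 1$ and $v\in\mathbb R^n$, and to verify invariance under translations and under $SL_n$ separately (with reflections handled by the same argument since the relevant Jacobians depend only on absolute values of determinants). Translation invariance is immediate: for each $F_j$ one has $P_{F_j}(L+v) = P_{F_j}L + P_{F_j}v$, which is a translate inside $F_j$ and hence has the same $i_j$-dimensional volume, so the integrand in \eqref{r-flar-quer} is unchanged pointwise on $F_{\bf r}^{n}$.

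For the linear part I mirror the argument the paper gave just above for $\Psi_{\bf r}$, replacing the local Jacobian identity \eqref{eq_lcv} by Grinberg's identity \eqref{multiplier-grinberg}. Since the transpose is a bijection of $SL_n$, it suffices to prove $\Phi_{\bf r}(g^tL)=\Phi_{\bf r}(L)$ for every $g\in SL_n$. Starting from \eqref{r-flar-quer} and applying \eqref{multiplier-grinberg} level by level gives
\[
\prod_{j=1}^{r}|P_{F_j}(g^t L)|^{i_{j-1}-i_{j+1}}
=\prod_{j=1}^{r}|P_{gF_j}L|^{i_{j-1}-i_{j+1}}\prod_{j=1}^{r}\sigma_{i_j}(g,F_j)^{i_{j-1}-i_{j+1}}.
\]
By \eqref{F-S-02}, with the conventions $i_0=0$ and $i_{r+1}=n$, the second product is exactly $\sigma_{F_{\bf r}^{n}}(g,\xi_{\bf r})$.

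Substituting and then applying the global change of variables \eqref{eq_gcv} on $F_{\bf r}^{n}$ with the integrand $f(\xi_{\bf r}):=\prod_{j=1}^{r}|P_{F_j}L|^{i_{j-1}-i_{j+1}}$ (for which $f(g\xi_{\bf r})=\prod_{j=1}^{r}|P_{gF_j}L|^{i_{j-1}-i_{j+1}}$), the Jacobian factor cancels and one returns to the integral defining $\Phi_{\bf r}^{-i_r n}(L)$, proving \eqref{aff-inv}. The only genuinely delicate step is lining up the exponents in Grinberg's lemma with those in \eqref{F-S-02}; both are determined by the convention $i_0=0$, $i_{r+1}=n$ via the telescoping identity \eqref{basic-id-0}, and once this bookkeeping is done, the remainder of the argument is the exact parallel of the proof given for $\Psi_{\bf r}$.
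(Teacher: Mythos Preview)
Your proof is correct and follows essentially the same approach as the paper: apply Grinberg's identity \eqref{multiplier-grinberg} to each factor, recognize the resulting product of $\sigma_{i_j}^{\,i_{j-1}-i_{j+1}}$ as the flag Jacobian \eqref{F-S-02}, and undo it via the global change of variables \eqref{eq_gcv}. You are more explicit than the paper about the translation step (which the paper dismisses with ``the general case follows easily''), and your remark that it suffices to treat $g^t$ since transposition is a bijection of $SL_n$ is a clean way to set things up; one small quibble is that the exponent match comes directly from the conventions $i_0=0$, $i_{r+1}=n$ and does not actually require the telescoping identity \eqref{basic-id-0}.
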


\begin{proof}
\noindent We will first prove the theorem in the case $A:= g\in SL_{n}$. Using (\ref{multiplier-grinberg}) for the projection onto each $F_j$, (\ref{F-S-02}) and making the change of variables (\ref{eq_gcv}), we get
\begin{align*}
\Phi^{-n i_r}_{{\bf r}} ( g^t L) 
&=  \int_{F_{\bf r}^{n}} \prod_{j=1}^{r} | P_{F_{j}}(g^t L)|^{- i_{j+1}+ i_{j-1}} d\xi_{\bf r}  \\
&=  \int_{F_{\bf r}^{n}} \prod_{j=1}^{r} \left( |P_{g F_j} L| \sigma_{i_j}(g,F_j)  \right)^{- i_{j+1}+ i_{j-1}} d\xi_{\bf r}  \\
&=  \int_{F_{\bf r}^{n}} \prod_{j=1}^{r} |P_{g F_j} L|^{- i_{j+1}+ i_{j-1}}  \prod_{j=1}^{r} \sigma^{- i_{j+1}+ i_{j-1}}_{i_j}(g,F_j)  d\xi_{\bf r}  \\
&=  \int_{F_{\bf r}^{n}} \prod_{j=1}^{r} |P_{g F_j} L|^{- i_{j+1}+ i_{j-1}}  \sigma_{F_{{\bf r}}^{n} } ( g, \xi_{\bf r})  d\xi_{\bf r} \\
&=  \int_{F_{\bf r}^{n}} \prod_{j=1}^{r} | P_{F_{j}} L|^{- i_{j+1}+i_{j-1}} d\xi_{\bf r} \\
&=  \Phi^{-n i_r}_{{\bf r}} (L) . 
\end{align*}
\noindent The general case follows easily. 
\end{proof}

\noindent  The proof of Theorem \ref{thm-aff} is now complete.

\section{Inequalities}
\label{section:inequalities}

We start by proving an extension of the inequality of Busemann-Straus
and Grinberg \eqref{Gr-ineq} to flag manifolds.

\begin{proposition}
\label{basic-ineq}
Let $ 1 < r \leq n-1$ and	$ {\bf r} := (i_{1}, \cdots, i_{r})$ be an increasing sequence of	integers between $1$ and $n-1$.
Then for every compact set $L$ in $\mathbb{R}^n$,
\begin{equation}
  \label{Psi-ineq}
  \Psi_{{\bf r}} (L)  \leq \Psi_{{\bf r}} (r_{L    } B_{2}^{n}) 
\end{equation}
with equality if and only if $L$ is a centered ellipsoid (up to a set
of measure zero).
\end{proposition}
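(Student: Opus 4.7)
The plan is to combine the nested integration formula (\ref{basic-flags-1}) with iterated applications of the Busemann-Straus-Grinberg inequality (\ref{Gr-ineq}), peeling off Grassmannians from innermost to outermost. The key observation is that the exponents $i_{j+1}-i_{j-1}$ appearing in the definition of $\Psi_{\bf r}$ telescope cleanly: after bounding the innermost integral, the newly produced factor combines with the next factor in the product to yield precisely the exponent required to apply Grinberg's inequality at the next stage.

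Concretely, I would first use (\ref{basic-flags-1}) to unfold $\Psi_{\bf r}^{i_r n}(L)$ as a nested integral starting from $G_{F_2,i_1}$. For the innermost integral (with $F_2,\dots,F_r$ fixed), I apply (\ref{Gr-ineq}) inside the ambient subspace $F_2\cong \mathbb{R}^{i_2}$ to the compact set $L\cap F_2$, using that a Euclidean ball of radius $\rho$ intersects any $i_1$-subspace in a Euclidean ball of the same radius. This yields
\begin{equation*}
\int_{G_{F_2,i_1}} |L\cap F_1|^{i_2}\,dF_1 \leq \frac{\omega_{i_1}^{i_2}}{\omega_{i_2}^{i_1}}\,|L\cap F_2|^{i_1}.
\end{equation*}
The resulting $|L\cap F_2|^{i_1}$ absorbs into the pre-existing $|L\cap F_2|^{i_3-i_1}$ to give $|L\cap F_2|^{i_3}$, which is exactly the form needed to apply Grinberg in $F_3\cong \mathbb{R}^{i_3}$. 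Iterating $r$ times, the $j$-th stage bounds $\int_{G_{F_{j+1},i_j}}|L\cap F_j|^{i_{j+1}}\,dF_j$ by a constant times $|L\cap F_{j+1}|^{i_j}$, and this combines with $|L\cap F_{j+1}|^{i_{j+2}-i_j}$ to form $|L\cap F_{j+1}|^{i_{j+2}}$. The outermost integration is then (\ref{Gr-ineq}) itself in $\mathbb{R}^n$ with $k=i_r$ applied to $L$, giving a bound proportional to $|L|^{i_r}$.

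To identify the resulting constant, I observe that each step of the chain is saturated when $L$ is a centered Euclidean ball, since every section of a ball through the origin is again a ball of the same radius. Hence the product of all the constants produced equals $\Psi_{\bf r}^{i_r n}(r_L B_2^n)$; this can be cross-checked by a direct computation, using the identity (\ref{basic-id-0}) to confirm that the exponents of $r_L$ collected from $|r_L B_2^n\cap F_j|=\omega_{i_j}r_L^{i_j}$ sum to $i_r n$. Taking $(i_r n)$-th roots yields (\ref{Psi-ineq}).

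For the equality case, equality throughout the chain forces equality in (\ref{Gr-ineq}) at the outermost stage applied to $L$ itself, and the classical equality case of the Busemann-Straus-Grinberg inequality then forces $L$ to be a centered ellipsoid up to a set of measure zero. The main obstacle in the plan is bookkeeping: verifying that the telescoping of exponents is in fact exact at every step, and that the combined prefactor really agrees with the value $\Psi_{\bf r}^{i_r n}(r_L B_2^n)$. Once this is confirmed, the inequality and its equality case reduce to invocations of (\ref{Gr-ineq}) in ambient subspaces of each relevant dimension.
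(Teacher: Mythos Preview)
Your proposal is correct and follows essentially the same approach as the paper: unfold $\Psi_{\bf r}^{i_r n}(L)$ via the nested integration formula, apply the Busemann--Straus--Grinberg inequality inside each successive ambient subspace $F_{j+1}$ to $L\cap F_{j+1}$, observe that the exponents telescope exactly, and identify the resulting constant by noting that every step is an equality for centered balls. Your treatment of the equality case (using only the outermost application of \eqref{Gr-ineq} to $L$ in $\mathbb{R}^n$) matches the paper's as well.
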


\begin{proof}
Inequality \eqref{Gr-ineq} implies that for every $n$, every $ E\in G_{n,m}$, any $1\leq \ell \leq m-1$ and every compact set $L \subseteq \mathbb R^{n}$
\begin{equation}
	\label{eq-00-1}
 	\int_{G_{E,\ell} } | (L \cap E) \cap F |^{ m} d \mu_{G_{E, \ell}} (F)
 	\leq c_{m,\ell} | L \cap E|^{\ell} ,
\end{equation}
with equality iff $L \cap E $ is an ellipsoid (up to a measure $0$ set
- see \cite{Gar}). Using \eqref{basic-flags-1} and \eqref{eq-00-1} we
have that
\begin{eqnarray*}
\Psi_{\bf r}^{ i_{r} n} (L) & = &\int_{F_{\bf r}^{n}} \prod_{j=1}^{r}
|L\cap F_{{j}} |^{ i_{j+1}- i_{j-1}} d \xi_{\bf r} \\ & = & \int_{
  G_{n, i_{r}} } \int_{G_{F_{r}, i_{r-1}}} \cdots \int_{ G_{F_{2},
    i_{1}}} \prod_{j=1}^{r} |L\cap F_{{j}} |^{ i_{j+1}- i_{j-1}}
dF_{1} \cdots dF_{r-1} dF_{r} \\ & = & \int_{ G_{n, i_{r}} }
\int_{G_{F_{r}, i_{r-1}}} \cdots\int_{G_{F_{3}, i_{2}}}
\prod_{j=2}^{r} |L\cap F_{{j}} |^{ i_{j+1}- i_{j-1}} \times \\ & &
\;\; \times \left( \int_{ G_{F_{2}, i_{1}}} |L\cap F_{1} |^{ i_{2}}
dF_{1} \right) dF_{2} \cdots dF_{r-1} dF_{r}\\ & = & \int_{ G_{n,
    i_{r}} } \int_{G_{F_{r}, i_{r-1}}} \cdots\int_{G_{F_{3}, i_{2}}}
\prod_{j=2}^{r} |L\cap F_{{j}} |^{i_{j+1}- i_{j-1}} \times \\ & & \;\;
\times \left( \int_{ G_{F_{2}, i_{1}}} |(L\cap F_{2}) \cap F_{1} |^{
  i_{2}} dF_{1} \right) dF_{2} \cdots dF_{r-1} dF_{r} \\ & \leq &
c_{i_{2}, i_{1}} \int_{ G_{n, i_{r}} } \int_{G_{F_{r}, i_{r-1}}}
\cdots\int_{G_{F_{3}, i_{2}}} \prod_{j=2}^{r} |L\cap F_{{j}} |^{
  i_{j+1}- i_{j-1}} \times \\ & & \;\; \times |L \cap F_{2} |^{ i_{1}}
dF_{2} \cdots dF_{r-1} dF_{r} \\ &\leq& \cdots \\ &\leq & |L|^{i_{r}}
\prod_{j=1}^{r} c_{i_{j+1}, i_{j}}.
\end{eqnarray*}

The last inequality is an equality only when $L$ is a centered
ellipsoid, up to set of measure zero; see \cite{Gar}. Since for the
Euclidean ball all inequalities in the previous chain are actually
equalities, we can compute the constants and by the linear-invariance property established by Furstenberg-Tzkoni we conclude the proof.
\end{proof}

Our next result is a type of Blaschke-Santal\'{o} and reverse
Blaschke-Santal\'{o} inequality for ${\bf r}$-flag
quermassintegrals. These inequalities concern the volume of the polar
body. For a compact set $L$ we define the {\it polar body} $L^{\circ}$
(with respect to the origin) as the convex body
$$ L^{\circ} := \{ x\in \mathbb R^{n} : \langle x, y\rangle\leq 1 ,
\ \forall y \in L \}. $$
It is straightforward to check the following
inclusion: for every compact set $L$ in $\mathbb R^{n}$ and $F\in
G_{n,k}$,
\begin{equation}\label{proj-section-inc}
  P_{F} L^{\circ}  \subseteq \left(L\cap F\right)^{\circ}  .
\end{equation} 
If, in addition, $L$ is convex and $0$ is in the interior of $L$, 
\begin{equation}\label{proj-section}
  P_{F} L^{\circ}  = \left( L\cap F\right)^{\circ}  .
\end{equation} 
Recall that the Blaschke-Santal\'{o} inequality (for symmetric convex
bodies), e.g., \cite{GarB}, \cite{S}, states that for every symmetric
convex body $L$ in $\mathbb R^{n}$,
\begin{equation}
  \label{san}
  |L| | L^{\circ} | \leq |B_{2}^{n}|^{2} .
\end{equation}
Moreover \eqref{san} holds when $L$ is convex and $L^{\circ}$ is
centered \cite{S}. An approximate reverse form of this inequality is
known as the Bourgain-Milman theorem \cite{BM}: for every compact,
convex set $L$ with $ 0 \in {\rm int}(L)$,
\begin{equation}
\label{r-san}
|L| |L^{\circ} | \geq  c^{n}|B_{2}^{n}|^{2}.
\end{equation}
Other proofs of this inequality include \cite{Mil88}, \cite{Kup08},
\cite{Naz12}, \cite{GPV14}.

The next proposition is the aforementioned Blaschke-Santal\'{o} and
its (approximate) reversal in the setting of ${\bf r}$-flag manifolds:

\begin{proposition}
  \label{prop:BS_flag}
Let $ 1 \leq r \leq n-1$ and $ {\bf r} := ( i_{1}, \cdots, i_{r})$ be
an increasing sequence of integers between $1$ and $n-1$. Then for a
symmetric compact set $L$ in $\mathbb R^{n}$
  \begin{equation}\label{S for Psi}
    \Phi_{{\bf r}} (L^{\circ}) \Psi_{{\bf r}} (L) \leq \Phi_{{\bf r}}
    ( B_{2}^{n} ) \Psi_{{\bf r}} (B_{2}^{n}).
  \end{equation}
  Moreover, if $L$ is a convex body in $\mathbb R^{n}$ with $ 0 \in {\rm
    int}(L)$, we have that
  \begin{equation}
    \label{RS for Psi}
    \Phi_{{\bf r}} (L^{\circ}) \Psi_{{\bf r}} (L) \geq c \, \Phi_{{\bf r}} ( B_{2}^{n} ) \Psi_{{\bf r}} (B_{2}^{n}) ,
  \end{equation}
  where $c>0$ is an absolute constant - exactly the constant of the reverse Santal\'o inequality \eqref{r-san}. 
\end{proposition}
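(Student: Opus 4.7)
The key geometric link is the standard inclusion $P_{F_j} L^\circ \subseteq (L\cap F_j)^\circ$ in the ambient subspace $F_j$ (with equality when $L$ is convex and $0\in\mathrm{int}(L)$). This converts the projection bodies appearing in $\Phi_{\mathbf{r}}(L^\circ)$ into sections of $L^\circ$-type, which can then be paired with the sections $L\cap F_j$ appearing in $\Psi_{\mathbf{r}}(L)$ via Blaschke-Santaló inequalities applied \emph{inside each subspace}. Throughout, write $a_j := i_{j+1}-i_{j-1}>0$ so that the exponents in $\Phi_{\mathbf{r}}$ and $\Psi_{\mathbf{r}}$ are $-a_j$ and $+a_j$ respectively.

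\textbf{Upper bound \eqref{S for Psi}.} First I would note that $L^\circ = (\mathrm{conv}(L))^\circ$, so the inclusion \eqref{proj-section-inc} gives $|P_{F_j} L^\circ|\leq |(L\cap F_j)^\circ|$ in $F_j$. Raising to the negative power $-a_j$ reverses this, and after taking the product in $j$ and integrating over $F^n_{\mathbf r}$ I would obtain
\begin{equation*}
\Phi_{\mathbf{r}}(L^\circ)^{-i_r n} \geq \int_{F^n_{\mathbf r}} \prod_{j=1}^r |(L\cap F_j)^\circ|^{-a_j}\,d\xi_{\mathbf r}.
\end{equation*}
Next, inside each $i_j$-dimensional subspace $F_j$, the Blaschke-Santaló inequality \eqref{san} applied to the (symmetric) set $L\cap F_j$ (or rather its convex hull, which has the same polar) yields $|(L\cap F_j)^\circ|\leq \omega_{i_j}^2/|L\cap F_j|$. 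Inverting once more, multiplying the resulting estimates over $j$, and integrating against $d\xi_{\mathbf r}$ produces $\Psi_{\mathbf r}(L)^{i_r n}\prod_j \omega_{i_j}^{-2a_j}$ on the right. Raising to $-1/(i_r n)$ gives $\Phi_{\mathbf{r}}(L^\circ)\,\Psi_{\mathbf{r}}(L)\leq \prod_j \omega_{i_j}^{2a_j/(i_r n)}$. Finally, since $P_{F_j}B_2^n = B_2^n\cap F_j$ has volume $\omega_{i_j}$, a direct computation shows $\Phi_{\mathbf{r}}(B_2^n)\Psi_{\mathbf{r}}(B_2^n) = \prod_j \omega_{i_j}^{2a_j/(i_r n)}$, matching the bound.

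\textbf{Lower bound \eqref{RS for Psi}.} For convex $L$ with $0\in\mathrm{int}(L)$, identity \eqref{proj-section} turns the inclusion above into equality, so the first step of the argument runs with equalities throughout. In place of Blaschke-Santaló I would now apply the Bourgain-Milman inequality \eqref{r-san} inside each $F_j$: $|(L\cap F_j)^\circ|\geq c^{i_j}\omega_{i_j}^2/|L\cap F_j|$, where $c$ is the absolute constant from \eqref{r-san}. Taking the product of the $(-a_j)$-th powers, the constant accumulates as $c^{-\sum_j i_j a_j}$, and here the crucial identity \eqref{basic-id-0} collapses the exponent to $-i_r n$. Integrating and raising to $-1/(i_r n)$ yields $\Phi_{\mathbf{r}}(L^\circ)\Psi_{\mathbf{r}}(L)\geq c\,\Phi_{\mathbf{r}}(B_2^n)\Psi_{\mathbf{r}}(B_2^n)$.

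\textbf{Main obstacle.} The proof is conceptually straightforward once one recognizes the pairing between projections of $L^\circ$ and sections of $L$. The main nuisance is the sign bookkeeping (each step either reverses or preserves inequalities depending on the sign of $-a_j$) together with the verification that the constants produced by Bourgain-Milman combine exactly into $c^{i_r n}$; this is precisely where \eqref{basic-id-0} is essential and is the only nontrivial arithmetic step. A minor point to address carefully is the non-convex symmetric case in \eqref{S for Psi}, handled simply by passing to the convex hull of each section before invoking Santaló.
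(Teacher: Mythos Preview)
Your proposal is correct and follows essentially the same route as the paper's proof: both use the inclusion $P_{F_j}L^\circ\subseteq (L\cap F_j)^\circ$ (equality in the convex case), apply Blaschke--Santal\'o (respectively Bourgain--Milman) inside each $F_j$, and invoke identity \eqref{basic-id-0} to collapse the Bourgain--Milman constants to $c^{i_r n}$. The only cosmetic difference is that the paper starts from $\Psi_{\mathbf r}(L)^{i_r n}$ and works toward $\Phi_{\mathbf r}(L^\circ)^{-i_r n}$, whereas you start from $\Phi_{\mathbf r}(L^\circ)^{-i_r n}$; the chain of inequalities is identical.
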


\begin{proof}
First note that
  $$ \Phi_{{\bf r}}^{i_{r} n} ( B_{2}^{n} ) \Psi_{{\bf r}}^{i_{r} n}
  (B_{2}^{n}) = \left( \prod_{j=1}^{r} | B_{2}^{n} \cap
  F_j|^{i_{j+1}- i_{j-1}} \right)^{2}. $$
Using the Blaschke-Santal\'{o} inequality \eqref{san} and
  \eqref{proj-section-inc}, we have
  \begin{eqnarray*}
    \Psi_{{\bf r}}^{i_{r}n} (L) 
    &=&\int_{F_{{\bf r}}^n }\prod_{j=1}^{r}|L \cap F_j|^{ i_{j+1}- i_{j-1}} d \xi_{\bf r} \\ 
    &\leq & \left(\prod_{j=1}^{r} | B_{2}^{n} \cap F_j|^{i_{j+1}- i_{j-1}}\right)^{2}
    \int_{F_{{\bf r}}^n }\prod_{j=1}^{r} \frac{1}{ |(L \cap F_j)^{\circ}|^{ i_{j+1}- i_{j-1}} } d \xi_{\bf r} \\ 
  	& \leq & \left( \prod_{j=1}^{r} | B_{2}^{n} \cap F_j|^{i_{j+1}-i_{j-1}}\right)^{2}
  	\int_{F_{{\bf r}}^n}\prod_{j=1}^{r} \frac{1}{|P_{F_j}L^{\circ}|^{ i_{j+1}- i_{j-1}} } d \xi_{\bf r} \\ 
  	& = & \Phi_{{\bf r}}^{i_{r} n} ( B_{2}^{n} ) \Psi_{{\bf r}}^{i_{r} n}
    (B_{2}^{n}) \Phi_{{\bf r}}^{-i_{r}n} (L^{\circ}) .
\end{eqnarray*}
On the other hand, using the reverse Blaschke-Santal\'{o} inequality \eqref{r-san}, \eqref{proj-section} and \eqref{basic-id-0} we get
\begin{eqnarray*}
 \Psi_{{\bf r}}^{i_{r}n}(L)
 & = &\int_{F_{\bf r}^{n}}\prod_{j=1}^{r} |L \cap F_j|^{ i_{j+1}- i_{j-1}} d \xi_{\bf r}\\ 
 &\geq & \left( \prod_{j=1}^{r} | B_{2}^{n} \cap F_j|^{i_{j+1}- i_{j-1}} \right)^{2} c^{ \sum_{j=1}^{r} i_{j} (i_{j+1}-i_{j-1}) } \int_{F_{{\bf r}}^n }\prod_{j=1}^{r} \frac{1}{|(L \cap F_j)^{\circ}|^{ i_{j+1}- i_{j-1}} } d \xi_{\bf r} \\ 
 & = &c^{i_{r}n} \left( \prod_{j=1}^{r} | B_{2}^{n} \cap F_j|^{i_{j+1}- i_{j-1}}\right)^{2}
 \int_{F_{{\bf r}}^{n}}\prod_{j=1}^{r} \frac{1}{ | P_{F_j}L^{\circ}|^{ i_{j+1}-i_{j-1}} } d \xi_{\bf r}\\ 
 &= & c^{i_{r}n} \Phi_{{\bf r}}^{i_{r} n} ( B_{2}^{n} )
 \Psi_{{\bf r}}^{i_{r} n} (B_{2}^{n})\Phi_{{\bf r}}^{-i_{r}n}(L^{\circ}).
 \end{eqnarray*}
 The proof is complete. 
\end{proof}

\noindent The following corollary has been proved in the case $r=1$ in \cite{PP2}.

\begin{corollary}
\noindent Let $ 1 \leq r \leq n-1$ and	$ {\bf r} := ( i_{1}, \cdots, i_{r})$ be an increasing sequence of	integers between $1$ and $n-1$.
Then for every  convex body $L$, 
\begin{equation}
\label{tilde-sym-two}
\Phi_{{\bf r}} (L)  \geq  c \, \Phi_{{\bf r}} (r_{L} B_{2}^{n}) ,
\end{equation}
where $c>0$ is an absolute constant.  
\end{corollary}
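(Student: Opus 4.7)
\medskip

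\noindent\textbf{Proof proposal.} The plan is to combine the reverse Blaschke--Santal\'o-type flag inequality \eqref{RS for Psi} with the dual flag isoperimetric inequality \eqref{Psi-ineq}, using the affine invariance of $\Phi_{\bf r}$ to put $L$ in a convenient position. Since $\Phi_{\bf r}$ is invariant under volume-preserving affine maps (Theorem \ref{thm-aff}), in particular under translations, and since the quantity $r_L$ is also translation-invariant, I may replace $L$ by $L-x$ for any $x$ without affecting either side of \eqref{tilde-sym-two}. I will choose $x$ to be the Santal\'o point of $L$, so that after translation $0 \in \mathrm{int}(L)$ and the classical Blaschke--Santal\'o inequality (in its non-symmetric form, valid when the polar is centered) gives
\begin{equation*}
  |L|\,|L^{\circ}| \leq \omega_n^2,
\end{equation*}
equivalently $r_L \cdot r_{L^{\circ}} \leq 1$.

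Next I apply Proposition \ref{prop:BS_flag}, specifically \eqref{RS for Psi}, to the convex body $L^{\circ}$ in place of $L$ (using $L^{\circ\circ}=L$), obtaining
\begin{equation*}
  \Phi_{{\bf r}}(L)\,\Psi_{{\bf r}}(L^{\circ}) \geq c\,\Phi_{{\bf r}}(B_2^n)\,\Psi_{{\bf r}}(B_2^n).
\end{equation*}
Then I bound $\Psi_{{\bf r}}(L^{\circ})$ from above by Proposition \ref{basic-ineq} combined with the homogeneity \eqref{homogen}, namely
\begin{equation*}
  \Psi_{{\bf r}}(L^{\circ}) \leq \Psi_{{\bf r}}(r_{L^{\circ}} B_2^n) = r_{L^{\circ}}\,\Psi_{{\bf r}}(B_2^n).
\end{equation*}
Dividing and using the Santal\'o bound $r_{L^{\circ}} \leq 1/r_L$, together with the homogeneity of $\Phi_{{\bf r}}$, yields
\begin{equation*}
  \Phi_{{\bf r}}(L) \geq \frac{c\,\Phi_{{\bf r}}(B_2^n)}{r_{L^{\circ}}} \geq c\, r_L\, \Phi_{{\bf r}}(B_2^n) = c\,\Phi_{{\bf r}}(r_L B_2^n),
\end{equation*}
which is the desired inequality.

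The only subtlety is the justification of the translation step: I need $\Phi_{{\bf r}}$ to be translation-invariant (which follows from Theorem \ref{thm-aff} applied to translations, or equivalently from the translation invariance of projection volumes $|P_{F_j}L|$), and I need to verify that choosing $x$ to be the Santal\'o point places $0$ in the interior of the translated body so that the reverse Santal\'o flag inequality \eqref{RS for Psi} applies. Both are standard. No truly hard step arises: the result is a direct corollary of Propositions \ref{basic-ineq} and \ref{prop:BS_flag}, paralleling the $r=1$ argument of \cite{PP2}.
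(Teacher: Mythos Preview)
Your proposal is correct and follows essentially the same route as the paper: translate $L$ so that the origin is suitably placed, apply the reverse flag Santal\'o inequality \eqref{RS for Psi} to $L^{\circ}$ (using $L^{\circ\circ}=L$), bound $\Psi_{{\bf r}}(L^{\circ})$ via \eqref{Psi-ineq} and homogeneity, and finish with $r_{L}r_{L^{\circ}}\leq 1$. The only cosmetic difference is that the paper translates $L$ to be centered whereas you translate to the Santal\'o point; both serve to ensure the Blaschke--Santal\'o inequality $r_L r_{L^{\circ}}\leq 1$ is available.
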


A compact set is called {\it centered} if its centroid lies at the origin. 

\begin{proof}
As $\Phi_{{\bf r}}(L)$ is translation-invariant, we may assume that
$L$ is centered.  The Blaschke-Santal\'o inequality implies
$r_L r_{L^{\circ}}\leq 1$, so with \eqref{RS for Psi} and
\eqref{Psi-ineq}, we obtain
$$ \Phi_{{\bf r}} (L) \geq c \frac{ \Phi_{{\bf r} } (B_{2}^{n})
  \Psi_{{\bf r}} (B_{2}^{n}) }{ \Psi_{{\bf r}}(L^{\circ})} \geq c
\frac{ \Phi_{{\bf r} } (B_{2}^{n}) \Psi_{{\bf r}} (B_{2}^{n}) }{
  \Psi_{{\bf r}}( r_{L^{\circ}} B_{2}^{n})} = \frac{c}{ r_{L^{\circ}}}
\Phi_{{\bf r}} ( B_{2}^{n}) \geq c \, \Phi_{{\bf r}} (r_{L} B_{2}^{n}). $$
The proof is complete.
\end{proof}

The next proposition shows that all the quantities $\Phi_{{\bf r}}(L)$
lie between the volume-radius $r_{L}$ and the mean width $W(L)$. For a convex body $L$ in $\mathbb R^{n}$, we write
\begin{equation}\label{min-W}
  W_{L} := \inf_{T\in SL_{n}} W(T L) .
\end{equation}

\begin{proposition}
Let $ 1 \leq r \leq n-1$ and $ {\bf r} := ( i_{1}, \cdots, i_{r})$ be an increasing sequence of integers between $1$ and $n-1$. Then for a convex body $L$ in $\mathbb R^{n}$
	\begin{equation}\label{Bet-Ur}
  		c \, r_L \leq \frac{ \Phi_{{\bf r}}(L) }{ \Phi_{{\bf r}} (B_{2}^{n}) }\leq W_{L} . 
	\end{equation}
\end{proposition}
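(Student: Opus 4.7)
The lower bound is immediate from the corollary just above the proposition: by \eqref{tilde-sym-two}, $\Phi_{{\bf r}}(L) \geq c\,\Phi_{{\bf r}}(r_L B_2^n)$, and the homogeneity \eqref{homogen} gives $\Phi_{{\bf r}}(r_L B_2^n) = r_L \Phi_{{\bf r}}(B_2^n)$, so $\Phi_{{\bf r}}(L)/\Phi_{{\bf r}}(B_2^n) \geq c\, r_L$.

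For the upper bound, I would first reduce to proving $\Phi_{{\bf r}}(L) \leq W(L)\,\Phi_{{\bf r}}(B_2^n)$ for every convex body $L$: since $\Phi_{{\bf r}}$ is $SL_n$-invariant by Theorem \ref{thm-aff}, substituting $TL$ for $L$ with $T\in SL_n$ and taking the infimum yields the stated bound with $W_L$ in place of $W(L)$.

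To prove $\Phi_{{\bf r}}(L) \leq W(L)\,\Phi_{{\bf r}}(B_2^n)$, I would combine a pointwise Urysohn estimate with a two-step Jensen argument. For each flag $\xi_{\bf r}=(F_1,\dots,F_r)$ and each $j$, Urysohn's inequality in the ambient space $F_j$ yields
\[
|P_{F_j} L| \le \omega_{i_j}\, W_{F_j}(P_{F_j} L)^{i_j},
\]
where $W_{F_j}$ is the mean width computed inside $F_j$. Raising to the negative exponent $-(i_{j+1}-i_{j-1})$, taking the product over $j$, and integrating over $F_{\bf r}^n$ gives
\[
\Phi_{{\bf r}}(L)^{-i_r n} \ge \Phi_{{\bf r}}(B_2^n)^{-i_r n} \int_{F_{\bf r}^n}\prod_{j=1}^{r} W_{F_j}(P_{F_j}L)^{-c_j}\, d\xi_{\bf r},
\]
where $c_j := i_j(i_{j+1}-i_{j-1})$ and the constants repack into $\Phi_{{\bf r}}(B_2^n)^{-i_r n}$.

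So the task reduces to showing that this last integral is at least $W(L)^{-i_r n}$. Here I would apply Jensen twice. First, since $\phi(t) = e^t$ is convex,
\[
\int \exp\!\Bigl(-\sum_{j} c_j \log W_{F_j}(P_{F_j}L)\Bigr)\, d\xi_{\bf r} \ge \exp\!\Bigl(-\sum_{j} c_j \int \log W_{F_j}(P_{F_j}L)\, d\xi_{\bf r}\Bigr).
\]
Since $\log W_{F_j}(P_{F_j}L)$ depends only on $F_j$, the integral over the flag manifold marginalizes to an integral over $G_{n,i_j}$, and by concavity of the logarithm together with the classical averaging identity $\int_{G_{n,i_j}} W_F(P_F L)\, dF = W(L)$,
\[
\int_{G_{n,i_j}} \log W_F(P_F L)\, dF \le \log \int_{G_{n,i_j}} W_F(P_F L)\, dF = \log W(L).
\]
Using the basic identity \eqref{basic-id-0}, namely $\sum_j c_j = i_r n$, the combined estimate gives exactly $W(L)^{-i_r n}$, completing the argument.

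The main obstacle is coordinating the jointly varying subspaces $F_1 \subset \dots \subset F_r$ with a one-dimensional quantity like $W(L)$: the pointwise bound $W_{F_j}(P_{F_j}L) \le W(L)$ simply fails (consider a thin slab), so one cannot take $W(L)$ out of the integral directly. The saving grace is that the \emph{average} of $W_{F_j}(P_{F_j}L)$ over $G_{n,i_j}$ is exactly $W(L)$, and the logarithmic form of Jensen is precisely what turns this marginal identity, together with the telescoping identity $\sum_j c_j = i_r n$, into the clean exponent needed on the right-hand side.
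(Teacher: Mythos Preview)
Your argument is correct. The lower bound matches the paper verbatim. For the upper bound you take a genuinely different route from the paper, and it is worth recording the difference.

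The paper proceeds \emph{iteratively}, working from the innermost subspace outward. For $r=2$ it first applies Urysohn to $|P_{F_1}L|$ inside $F_2$, then uses Jensen (stated there as ``H\"older'') on $\int_{G_{F_2,i_1}} W(P_{F_1}L)^{-i_1 i_2}\,dF_1$ together with the \emph{nested} mean-width identity
\[
\int_{G_{F_2,i_1}} W(P_{F_1}L)\, dF_1 = W(P_{F_2}L),
\]
thereby collapsing the $F_1$-integral into a function of $F_2$ alone; then it repeats. The induction thus hinges on the relation between mean widths of successive projections.

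Your approach is \emph{global}: you apply Urysohn to every factor at once, then a single Jensen inequality for $e^{t}$ over the whole flag manifold, reducing the problem to bounding $\sum_j c_j \int_{F_{\bf r}^n}\log W_{F_j}(P_{F_j}L)\,d\xi_{\bf r}$. Because each summand depends only on $F_j$, you invoke the fact that the $F_j$-marginal of the Haar measure on $F_{\bf r}^n$ is the Haar measure on $G_{n,i_j}$ (a consequence of $SO_n$-invariance, in the spirit of \eqref{inv-proj}), and finish with the ordinary identity $\int_{G_{n,i_j}} W(P_F L)\,dF = W(L)$ and the telescoping sum $\sum_j c_j = i_r n$.

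What each buys: the paper's inductive scheme is closer to the nested structure of the flag integral and would generalize readily to any functional satisfying a Kubota-type recursion. Your argument is cleaner and handles all $r$ at once without an induction, at the cost of needing the marginal property of the flag measure explicitly. Both yield exactly the same inequality $\Phi_{\bf r}(L)\le W(L)\,\Phi_{\bf r}(B_2^n)$, and your closing reduction to $W_L$ via the $SL_n$-invariance of $\Phi_{\bf r}$ is the same as the paper's.
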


\begin{proof}
The left-most inequality follows from \eqref{tilde-sym-two}.  Next,
 since $h_{L} (x) = h_{P_{F}L}(x)$ for $x\in F$, we have
\begin{equation}
\label{fub-W}
\int_{G_{n,k}} W(P_{F} L) \, d F = W(L).
\end{equation}
We will prove the right-hand side for the case $r=2$ (the general case follows by induction on $r$).  Using the Urysohn and H\"older inequalities repeatedly, we get
\begin{eqnarray*}
  \Phi_{{\bf r}}(L) 
  & = & \left( \int_{F^n_{\bf r}} \prod_{j=1}^{2} |P_{F_{j}}L|^{-i_{j+1}+i_{j-1}} d\xi_{\bf r} \right)^{-\frac{1}{n i_2}} \\ 
  & = & \left( \int_{G_{n,i_{2}}} \frac{1}{ |P_{F_{2}} L|^{ n-i_{1}}} \int_{G_{F_2,i_1}} \frac{1}{|P_{F_{1}}L|^{i_2}} dF_1 dF_2 \right)^{-\frac{1}{ni_{2}}} \\ 
  &\leq& \left( \int_{G_{n,i_{2}}} \frac{1}{| P_{F_{2}} L|^{ n-i_{1}}} \int_{G_{F_{2}, i_{1}}}
  \frac{1}{|P_{F_{1}}B_{2}^{n}|^{ i_{2}} W(P_{F_{1}}L)^{i_{1} i_{2}}}
  dF_1 dF_2\right)^{-\frac{1}{ni_{2}}} \\ 
  & \leq & \left( \int_{G_{n,i_{2}}} \frac{1}{| P_{F_{2}} L|^{n-i_{1}}} \frac{1}{|B_{2}^{i_{1}}|^{ i_{2}} \left(\int_{G_{F_{2}, i_{1}}} W(P_{F_{1}}L) dF_1 \right)^{i_{1}i_{2}}} dF_2 \right)^{-\frac{1}{ni_{2}}} \\ 
  & = & \left( \int_{G_{n,i_{2}}} \frac{1}{| P_{F_{2}} L|^{ n-i_{1}}} \frac{1}{|B_{2}^{i_{1}}|^{ i_{2}} W(P_{F_{2}}L)^{i_{1}i_{2}}} dF_2 \right)^{-\frac{1}{ni_{2}}} \\ 
  & \leq & \left( \int_{G_{n,i_{2}}} \frac{1}{ | P_{F_{2}} B_{2}^{n}|^{ n-i_{1}} W(P_{F_{2}}L)^{i_{2}(n-i_{1})}} \frac{1}{|B_{2}^{i_{1}}|^{ i_{2}} W(P_{F_{2}}L)^{i_{1}i_{2}}} dF_2
  \right)^{-\frac{1}{ni_{2}}} \\ 
  & = & \left( |B_{2}^{i_{2}}|^{n-i_{1}} |B_{2}^{i_{1}}|^{i_{2}} \right)^{\frac{1}{ ni_{2}}}\left(
  \int_{G_{n,i_{2}}} W(P_{F_{2}}L)^{-n i_{2}} dF_2 \right)^{-\frac{1}{ni_{2}}} \\ 
  & \leq & \Phi_{{\bf r}}(B^n_2) \int_{G_{n,i_{2}}} W(P_{F_{2}}L) dF_2 \\ 
  & = &\Phi_{{\bf r}}(B^n_2) W(L).
\end{eqnarray*}
In the above argument we may replace $L$ by $TL$ with $T\in SL_n$. Since the left-hand side of this inequality remains the same for all $T$ by Theorem \ref{thm-aff}, we may take the infimum over all $T$ on the right-hand side. This completes the proof.
\end{proof}


We conclude this subsection with a discussion of inequalities of isomorphic nature. For convex bodies $L$ in $\mathbb{R}^n$, we define the Banach-Mazur distance to the Euclidean ball $B_2^n$ by $$ d_{BM}(L) := \inf\left\{ ab : a>0,b>0, \frac{1}{b} B_{2}^{n} \subseteq
T(L-L) \subseteq a B_{2}^{n}, T\in GL_{n} \right\}.$$ 
For symmetric convex bodies, this coincides with the standard notion of Banach-Mazur distance (for more information see, e.g., \cite{NTJ89}).

\begin{proposition}\label{prop:iso}
Let $ 1 \leq r \leq n-1$ and $ {\bf r} := ( i_{1}, \cdots, i_{r})$ be an increasing sequence of integers between $1$ and $n-1$. Then for a convex body $L$ in $\mathbb{R}^n$
\begin{equation}\label{Phi-r-upper}
  \Phi_{{\bf r}}(L) \leq c \min\left\{ \sqrt{\frac{n}{i_{r}}},
  \log{(1+ d_{BM}(L))} \right\} \Phi_{{\bf r}} (r_{L} B_{2}^{n}) .
\end{equation}
Moreover, if $L$ is also symmetric, then
\begin{equation}\label{Psi-r-upper}
  \Psi_{{\bf r}}(L) \geq \frac{c}{\min\left\{ \sqrt{\frac{n}{i_{r}}} ,
    \log{(1+ d_{BM}(L))} \right\} }\Psi_{{\bf r}} (r_{L} B_{2}^{n}) .
\end{equation}
\end{proposition}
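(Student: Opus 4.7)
The plan is to prove \eqref{Phi-r-upper} first and then derive \eqref{Psi-r-upper} from it by duality, using the (reverse) Blaschke-Santal\'o-type inequalities of Proposition~\ref{prop:BS_flag}.

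For \eqref{Phi-r-upper}, the mean-width estimate from the preceding proposition together with the homogeneity~\eqref{homogen} yields
$$\frac{\Phi_{{\bf r}}(L)}{\Phi_{{\bf r}}(r_L B_2^n)} \leq \frac{W_L}{r_L}.$$
The logarithmic bound $W_L/r_L \leq c \log(1+d_{BM}(L))$ then follows from Milman's $\ell$-position in its Pisier refinement sensitive to the Banach-Mazur distance: every convex body $L \subset \mathbb R^n$ admits $T \in SL_n$ with $W(TL)\,M(TL) \leq c \log(1+d_{BM}(L))$, where $M(K) = \int_{S^{n-1}}\|x\|_K\, d\sigma(x)$ with $\sigma$ the rotation-invariant probability measure on $S^{n-1}$; combined with the elementary bound $M(K)\, r_K \geq c$ (Jensen's inequality applied to the polar formula for volume), this yields the claim.

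The bound $W_L/r_L \leq c\sqrt{n/i_r}$ cannot be reached via mean-width considerations alone---it would predict a universal constant when $i_r = n-1$, which is not a feature of $W_L/r_L$. Instead, we unfold $\Phi_{{\bf r}}(L)^{-i_r n}$ using Proposition~\ref{basic-flags-1} as an iterated integral with outermost integration over $G_{n,i_r}$; since $P_{F_j}(P_{F_r}L) = P_{F_j}L$ for $F_j \subseteq F_r$, the inner integrand factors as a power of $|P_{F_r}L|$ times the $(i_1,\ldots,i_{r-1})$-flag quermassintegral of $P_{F_r}L$ taken in the ambient space $F_r \cong \mathbb R^{i_r}$. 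Following the strategy of \cite{PP2}, applying the Busemann-Straus/Grinberg-type inequalities of Proposition~\ref{basic-ineq} at each inner level in the favorable direction reduces the full estimate to the single-Grassmannian quantity $\int_{G_{n,i_r}}|P_{F_r}L|^{-n}\,dF_r = \Phi_{[i_r]}(L)^{-i_r n}$, up to Euclidean-ball normalizations. The bound $\Phi_{[i_r]}(L) \leq c\sqrt{n/i_r}\,\Phi_{[i_r]}(r_L B_2^n)$ from \cite{PP2} then completes the argument.

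For \eqref{Psi-r-upper}, the reverse Blaschke-Santal\'o inequality \eqref{RS for Psi} gives $\Psi_{{\bf r}}(L) \geq c\,\Phi_{{\bf r}}(B_2^n)\Psi_{{\bf r}}(B_2^n)/\Phi_{{\bf r}}(L^\circ)$. Since $L$ is symmetric, $L^\circ$ is also a symmetric convex body with $d_{BM}(L^\circ) = d_{BM}(L)$, so \eqref{Phi-r-upper} applied to $L^\circ$ bounds $\Phi_{{\bf r}}(L^\circ)$ above by $c\min\{\sqrt{n/i_r}, \log(1+d_{BM}(L))\}\,r_{L^\circ}\, \Phi_{{\bf r}}(B_2^n)$. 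Combining with the classical Blaschke-Santal\'o relation $r_L\, r_{L^\circ} \leq 1$ and the homogeneity $\Psi_{{\bf r}}(r_L B_2^n) = r_L\,\Psi_{{\bf r}}(B_2^n)$ completes the deduction. The principal technical obstacle is the $\sqrt{n/i_r}$ estimate above: one has the single-Grassmannian bound of \cite{PP2}, but lifting it through the nested inner flag integrations without incurring additional dimension-dependent losses requires invoking the inner Grinberg-type inequalities in precisely the right direction at each step.
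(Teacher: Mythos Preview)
Your handling of the logarithmic bound in \eqref{Phi-r-upper} and the entire deduction of \eqref{Psi-r-upper} from \eqref{Phi-r-upper} via Proposition~\ref{prop:BS_flag} match the paper's argument essentially verbatim.

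The $\sqrt{n/i_r}$ bound, however, has a genuine gap. Proposition~\ref{basic-ineq} is an inequality for $\Psi_{\bf r}$, i.e., for \emph{sections}; there is no analogous sharp inequality for $\Phi_{\bf r}$ available to iterate. Concretely, your unfolding gives
\[
\Phi_{\bf r}(L)^{-i_r n}=\int_{G_{n,i_r}}|P_{F_r}L|^{i_{r-1}-n}\,\Phi^{F_r}_{{\bf r'}}(P_{F_r}L)^{-i_{r-1}i_r}\,dF_r,
\]
with ${\bf r'}=(i_1,\ldots,i_{r-1})$. To reduce to $\Phi_{[i_r]}(L)^{-i_r n}=\int|P_{F_r}L|^{-n}dF_r$ you would need, for every $F_r$, the pointwise lower bound
\[
\Phi^{F_r}_{{\bf r'}}(P_{F_r}L)^{-i_{r-1}i_r}\ \geq\ c\,|P_{F_r}L|^{-i_{r-1}},
\]
equivalently $\Phi^{F_r}_{{\bf r'}}(P_{F_r}L)\leq C\,\Phi^{F_r}_{{\bf r'}}(r_{P_{F_r}L}B_2^{i_r})$. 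For $r'=1$ this is exactly Lutwak's conjecture \eqref{eqn:Lutwak_conj}, which is open; the only available inequality in this direction is the one you are trying to prove. If instead you feed in the approximate upper bound inductively, the factors telescope to $\sqrt{n/i_1}$, not $\sqrt{n/i_r}$. The inequality you can borrow from Proposition~\ref{basic-ineq} (or \eqref{tilde-sym-two}) goes the other way and yields $\Phi_{\bf r}(L)\geq c\,\Phi_{[i_r]}(L)$ up to ball normalization---useless here.

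The paper bypasses this entirely. It uses Klartag's perturbation theorem (Theorem~\ref{Kl}) to replace $L$ by a nearby body $K_2$ with $L_{K_2}\simeq 1$, puts $K_2$ in isotropic position by the affine invariance of $\Phi_{\bf r}$, and then bounds every projection simultaneously via \eqref{R2-iso}: $|P_{F_j}K_2|\leq (c\,n/i_j)^{i_j}$. Plugging this into the integrand termwise and using \eqref{basic-id-0} gives $\Phi_{\bf r}(K_2)\leq c\,\Phi_{\bf r}(D_n)^2$, and the elementary estimate $\Phi_{\bf r}(D_n)\leq\sqrt{n/i_r}$ (AM/GM, equation~\eqref{est-B}) finishes the job. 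No iteration through the flag is needed.
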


The proof relies on several different tools. We draw on ideas from Dafnis and the second-named author \cite{DP} to exploit the affine invariance of $\Phi_{{\bf r}}(L)$, $\Psi_{{\bf r}}(L)$ by using appropriately chosen affine images of $L$. To this end, recall the following fundamental theorem which combines work of Figiel--Tomczak-Jaegermann \cite{FTJ}, Lewis \cite{Lewis}, Pisier \cite{Pi82} and Rogers-Shephard \cite{RS2} (see Theorem 1.11.5 on p. 52 in \cite{BGVV}). 


\begin{theorem}\label{thm:ell}
Let $L$ be a centered convex body. Then there exists a linear map $T\in SL_{n}$ such that
  \begin{equation}
    \label{pisier}
    W(T L) \leq c \log\{ 1+ d_{BM} (L)\} \sqrt{n}|L|^{1/n}.
  \end{equation}
\end{theorem}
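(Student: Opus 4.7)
The strategy is to combine a Rogers--Shephard symmetrization with Lewis's theorem on the $\ell$-position, Pisier's $K$-convexity estimate, the Bourgain--Milman inverse Santal\'o inequality \eqref{r-san}, and Urysohn's inequality. Since the Banach--Mazur distance used in this paper is defined via $L-L$, we have $d_{BM}(L) = d_{BM}(L-L)$. Setting $K := L - L$ (a symmetric convex body), the identity $W(L-L) = 2 W(L)$ together with the Rogers--Shephard estimate $|L-L|^{1/n} \leq 4 |L|^{1/n}$ reduces the problem to producing $T \in SL_n$ with $W(TK) \leq c \sqrt{n} \log(1+d_{BM}(K)) |K|^{1/n}$.

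Next I would invoke Lewis's theorem to place $K$ into the $\ell$-position: there exists $T \in SL_n$ such that
\[ \ell(TK) \cdot \ell((TK)^\circ) \leq n \cdot \mathrm{K}(X_K), \]
where $\mathrm{K}(X_K)$ is the $K$-convexity constant of the normed space with unit ball $TK$. Pisier's estimate bounds $\mathrm{K}(X_K) \leq c \log(1 + d_{BM}(K))$. Passing from $\ell$-norms to spherical means via $\ell(A) = b_n M^*(A)$, with $b_n := \mathbb{E}|g| \asymp \sqrt{n}$ for standard Gaussian $g \in \mathbb{R}^n$, and using $M^*(A^\circ) = M(A)$, this reads
\[ M^*(TK) \cdot M(TK) \leq c \log(1 + d_{BM}(K)). \]

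To extract $W(TK) = 2 M^*(TK)$ from this product bound, I would bound $M(TK)$ from below by a volumetric quantity. Urysohn's inequality applied to $(TK)^\circ$ gives $M(TK) = M^*((TK)^\circ) \geq (|(TK)^\circ|/|B_2^n|)^{1/n}$, and the Bourgain--Milman inequality \eqref{r-san} gives $|(TK)^\circ|^{1/n} \geq c |B_2^n|^{2/n}/|TK|^{1/n}$, hence $M(TK) \geq c |B_2^n|^{1/n}/|K|^{1/n}$ (using $|TK| = |K|$). Combining with the Pisier--Lewis bound and $|B_2^n|^{1/n} \asymp 1/\sqrt{n}$ yields $W(TK) \leq c \sqrt{n} \log(1 + d_{BM}(K)) |K|^{1/n}$; undoing the symmetrization delivers the theorem.

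The main obstacle is Pisier's $K$-convexity estimate, which rests on substantial Banach space machinery (Gaussian type/cotype and the boundedness of the Rademacher projection) that goes well beyond the other inputs. Lewis's theorem is similarly deep but classical, while Rogers--Shephard, Urysohn, and Bourgain--Milman enter only as black boxes.
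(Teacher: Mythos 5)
Your argument is correct, and it is essentially the standard proof of the result the paper simply quotes without proof (it is Theorem 1.11.5 in \cite{BGVV}, attributed there, as here, to Figiel--Tomczak-Jaegermann, Lewis, Pisier and Rogers--Shephard): symmetrize via $K=L-L$ using Rogers--Shephard and $W(L-L)=2W(L)$, pass to the $\ell$-position so that $M(TK)M^{*}(TK)\leq c\log(1+d_{BM}(K))$ via Lewis plus Pisier's $K$-convexity bound (the inequality $\ell(TK)\,\ell((TK)^{\circ})\leq n\,\mathrm{K}(X_K)$ is really Lewis combined with the Figiel--Tomczak-Jaegermann duality step, a point of attribution only), and then convert the product bound into the stated reverse Urysohn inequality. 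The only place you diverge from the usual argument is the lower bound on $M(TK)$: invoking Urysohn for $(TK)^{\circ}$ together with Bourgain--Milman works, but it is heavier than necessary, since polar coordinates and Jensen give directly
\begin{equation*}
\frac{|A|}{\omega_n}=\int_{S^{n-1}}\norm{\theta}_A^{-n}\,d\sigma(\theta)\geq \left(\int_{S^{n-1}}\norm{\theta}_A\,d\sigma(\theta)\right)^{-n},
\qquad\text{i.e.}\qquad M(A)\geq \left(\frac{\omega_n}{|A|}\right)^{1/n},
\end{equation*}
which yields $M(TK)\gtrsim 1/(\sqrt{n}\,|K|^{1/n})$ without any deep input; otherwise your chain of estimates, including the reduction $d_{BM}(L)=d_{BM}(L-L)$ under the paper's definition and the final normalization $T\in SL_n$, is sound.
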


We will also use recent results on isotropic convex bodies. For background, the reader may consult \cite{BGVV}, we will however recall all facts that we need here. To each convex body $M\subseteq \mathbb{R}^n$ with unit volume, one can associate an ellipsoid $Z_{2}(M)$, called the {\it $L_{2}$-centroid body} of $M$, which is defined by its support function as
$$ h_{Z_{2}(M)} (\theta) := \left( \int_{M} | \langle x, \theta \rangle |^{2} dx \right)^{\frac{1}{2}}. $$ 
The {\it isotropic constant} of $M$ is defined by $L_{M} := r_{Z_{2}(M)}$. We say that $M$ is {\it isotropic} if it is centered and $Z_{2}(M) = L_{M} B_{2}^{n}$.  Fix an isotropic convex body $M$ and a $k$-dimensional subspace $F$. K. Ball \cite{Ball1} proved that
\begin{equation}
\label{Ball}
|M \cap F^{\perp} |^{\frac{1}{ k}} \geq \frac{c}{L_{M}};
\end{equation}
a corresponding inequality for projections,
\begin{equation}
\label{R2-iso}
|P_{F} M| \leq \left(c \frac{n}{k} L_{M}\right)^{k},
\end{equation}
follows immediately from \eqref{Ball} and the Rogers-Shephard inequality \cite{RS2}: 
$$ |P_{F} M| |M\cap F^{\perp}| \leq {n\choose k}. $$

Next, we recall a variant of $\Psi_{[k]}(M)$ studied by Dafnis and the second-named author \cite{DP}. For every $ 1\leq k \leq n-1$ and a compact set $M$ in $\mathbb R^{n}$ with $|M|=1$, we define the quantity
\begin{equation}\label{phi-tilde}
\wtd{\Phi}_{[k]}(M) := \left( \int_{G_{n,k}} |M\cap F^{\perp}|^{n}
d\mu_{G_{n,k}}(F) \right)^{\frac{1}{nk}}.
\end{equation}
In \cite{DP} it is shown that for every convex body $M$ in $\mathbb{R}^n$ of unit volume,
\begin{equation}\label{phi-tilde-bound}
 \frac{c_{1}}{ L_{M}} \leq \wtd{\Phi}_{[k]}(M) \leq \wtd{ \Phi}_{[k]}(D_n) \simeq 1 ,
\end{equation}
where $D_n$ is the Euclidean ball of volume one.

We also invoke B. Klartag's fundamental result on perturbations of
isotropic convex bodies \cite{Kl} having a well-bounded isotropic
constant. 

\begin{theorem}\label{Kl}
Let $M$ be a convex body in $\mathbb R^{n}$. For every $\varepsilon\in (0,1)$ there exists a centered convex body $M_{Kl} \subset \mathbb R^{n}$ and a point $x\in \mathbb R^{n}$ such that
 \begin{equation}
   \label{Kl-1}
   \frac{1}{ 1+\varepsilon} M_{Kl} \subseteq M + x \subseteq (1+\varepsilon) M_{Kl}
 \end{equation}
 and 
 \begin{equation}
   \label{Kl-2}
   L_{M_{Kl}} \leq \frac{c}{ \sqrt{\varepsilon}}.
 \end{equation}
\end{theorem}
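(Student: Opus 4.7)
The plan is to reproduce Klartag's construction of a bounded-isotropic-constant perturbation from his paper ``On convex perturbations with a bounded isotropic constant'' (GAFA, 2006). After translation, $M$ may be assumed centered; and since $L_M$ is $GL_n$-invariant on convex bodies and both \eqref{Kl-1}, \eqref{Kl-2} are preserved under the $SL_n$ action, we may further normalize to $|M|=1$ with $M$ in isotropic position, so that the covariance is $L_M^2 I_n$ with $L_M$ possibly large. What must then be produced is a centered convex body $M_{Kl}$ satisfying $(1+\varepsilon)^{-1}M_{Kl}\subseteq M+x\subseteq (1+\varepsilon) M_{Kl}$ for some shift $x$, with $L_{M_{Kl}}\leq c\varepsilon^{-1/2}$.

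The mechanism is a controlled smoothing of $M$ at a scale tuned to $\varepsilon$. A natural vehicle is the logarithmic Laplace transform $\Lambda_M(\xi):=\log\int_M e^{\langle\xi,x\rangle}\,dx$: its Legendre dual $\Lambda_M^*$ is a smooth strictly convex function, and $e^{-\Lambda_M^*}$ defines a log-concave density whose isotropic structure is accessible by standard log-concave tools. I would define $M_{Kl}$ as an appropriate rescaling and recentering of a sub-level set $\{\Lambda_M^*\leq t\}$ for a value $t\asymp n$ tuned precisely to $\varepsilon$. The inclusion \eqref{Kl-1} is then a two-sided Chernoff--Markov comparison between the uniform measure on $M$ and the exponential tilt defining $\Lambda_M$: at the right choice of $t$, the sub-level set differs from $M$ (after recentering) by at most a factor of $1+\varepsilon$ in each direction, with one inclusion following from exponential moment estimates and the reverse from the tail behavior of $\Lambda_M$ near the boundary of the polar of $M$.

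The main obstacle is the isotropic constant bound \eqref{Kl-2}. Here one must show that the smoothed density, at the chosen scale, has covariance whose determinant and supremum combine to give an isotropic constant of order $\varepsilon^{-1/2}$, independently of the (potentially huge) original $L_M$. The key input is Paouris-type concentration for isotropic log-concave vectors, which controls the tails of $|X|$ for $X$ distributed according to the smoothed density and lets the second moment be estimated by the smoothing scale rather than by $L_M$. Tuning $t$ so that the trade-off matches the optimal exponent $-1/2$ in $\varepsilon$ — rather than a larger power — is the technical heart of the argument, and requires a careful interplay between the rate at which $\Lambda_M^*$ differs from the gauge of $M$ near the boundary and the rate at which its Hessian degenerates toward the interior.
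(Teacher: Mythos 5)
First, a point of orientation: the paper does not prove Theorem \ref{Kl} at all; it is imported verbatim from Klartag's paper \cite{Kl} and used as a black box. So the relevant benchmark for your sketch is Klartag's own argument, and measured against it your proposal identifies the right starting object (the logarithmic Laplace transform and exponential tilts) but then substitutes a different construction whose two crucial claims are left unproved and are, as stated, doubtful. (a) Defining $M_{Kl}$ as a rescaled sub-level set $\{\Lambda_M^*\leq t\}$ with $t\asymp n$ does not give the sandwich \eqref{Kl-1}: already in one dimension $\{\Lambda_M^*\leq t\}$ is roughly $(1-e^{-t/n})M$, so a level $t\asymp n$ yields only a constant-factor approximation of $M$, and no argument is offered that any choice of $t$ produces a $(1+\varepsilon)$-sandwich rather than a merely radial estimate. (b) The bound \eqref{Kl-2} is the entire content of the theorem, and the tool you invoke for it, Paouris-type concentration, controls Euclidean-norm tails of isotropic log-concave vectors; the quantity that must be controlled here is $\bigl(\sup f/\int f\bigr)^{1/n}\bigl(\det\mathrm{Cov}\bigr)^{1/(2n)}$ of the smoothed density, i.e.\ a determinant against a sup, and nothing in your sketch explains how the dependence on the possibly enormous $L_M$ disappears. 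Reducing to isotropic position at the start, as you do, does not help with this.

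For comparison, the mechanism in \cite{Kl} is: with $\Lambda_M$ the log-Laplace transform of the uniform measure on $M$ (take $0$ the barycenter), one considers tilted densities $f_\xi=e^{\langle\xi,\cdot\rangle}\mathbf{1}_M$ for $\xi\in\varepsilon n M^{\circ}$. The change of variables $\nabla\Lambda_M:\varepsilon nM^{\circ}\to M$ gives $\int_{\varepsilon nM^{\circ}}\det\nabla^2\Lambda_M\leq|M|$, and the Bourgain--Milman inequality \eqref{r-san} bounds $|\varepsilon nM^{\circ}|$ from below, so some $\xi_0$ satisfies $\det\nabla^2\Lambda_M(\xi_0)\leq(C/\varepsilon)^n|M|^2$; since $|\langle\xi_0,x\rangle|\leq\varepsilon n$ on $M$, the log-concave measure with density $f_{\xi_0}$ has isotropic constant at most $C/\sqrt{\varepsilon}$. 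The passage from this measure back to a body is not via level sets of $\Lambda_M^*$ but via the $1/n$-power radial (Ball-body) construction applied to $f_{\xi_0}$: because $e^{-\varepsilon n}\mathbf{1}_M\leq f_{\xi_0}\leq e^{\varepsilon n}\mathbf{1}_M$, taking $n$-th roots of the radial integrals gives $e^{-c\varepsilon}M\subseteq M_{Kl}\subseteq e^{c\varepsilon}M$ after recentering, while $L_{M_{Kl}}$ is comparable to the isotropic constant of $f_{\xi_0}$. The determinant/transportation step, the use of Bourgain--Milman, and the measure-to-body passage that simultaneously preserves closeness and the isotropic constant are the heart of the proof, and none of them appear in your sketch; as written it is a plan whose hard steps are asserted rather than proved, and the specific route proposed (sub-level sets of $\Lambda_M^*$ plus norm concentration) would not deliver either \eqref{Kl-1} or \eqref{Kl-2} without being replaced by the above ingredients.
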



We are now ready to complete the proof.

\begin{proof}[Proof of Proposition \ref{prop:iso}]
By homogeneity of the operators $\Phi_{\bf r}$ and $\Psi_{\bf r}$, we can assume that $L$ has unit volume. 

First we will prove the bound \eqref{Phi-r-upper} for $\bf r$-flag
affine quermassintegrals. By translation-invariance of projections, we may
further assume that $L$ is centered. Bounding $\Phi_{{\bf r}}(L)$ by
$W(L)$ according to \eqref{Bet-Ur}, using affine invariance of
$\Phi_{\bf r}$ and reverse Urysohn inequality from Theorem
\ref{thm:ell}, we get
	\begin{equation}
	\label{r-flag-upper-1}
	\Phi_{{\bf r}} (L) \leq c\log( 1+ d_{BM}(L)) \Phi_{{\bf r}}(D_n).
	\end{equation}
For the Euclidean ball $D_n$ of unit volume, for every $F\in G_{n,k}$, we have 
$|P_{F} D_{n}|^{\frac{1}{k}}=| D_{n} \cap F|^{\frac{1}{k}}\simeq \sqrt{\frac{n}{k}}$, 
so
$$ \Phi_{{\bf r}} ( D_{n}) \simeq \left( \prod_{j=1}^{r} \left(\frac{n}{ i_{j}}\right)^{i_{j} (i_{j+1}-i_{j-1}) }\right)^{\frac{1}{2i_{r} n}}. $$ 
The AM/GM inequality implies
$$\left( \prod_{j=1}^{r} \left( \frac{n}{ i_{j}}\right)^{i_{j} (i_{j+1}-i_{j-1}) }\right)^{\frac{1}{ 2i_{r} n}} \leq \sqrt{\frac{n}{i_{r}n} \sum_{j=1}^{r} \frac{i_{j} (i_{j+1}-j_{j-1})}{i_{j}} }\leq \sqrt{ \frac{n}{ i_{r}} }. $$
Thus
\begin{equation}
   \label{est-B}
   \Phi_{{\bf r}} (D_{n}) = \Psi_{{\bf r}}( D_{n}) \simeq \left(
   \prod_{j=1}^{r} \left( \frac{n}{ i_{j}}\right)^{i_{j} (
     i_{j+1}-i_{j-1}) }\right)^{\frac{1}{ 2i_{r} n}} \leq \sqrt{\frac{n}{i_{r}}}.
\end{equation}
Let $K_{1} \subset \mathbb{R}^n$ be a centered convex body and $x\in \mathbb{R}^n$ from the conclusion of Theorem \ref{Kl} corresponding to $\varepsilon=\frac{1}{2}$. Then \eqref{Kl-1} implies $1=|L|^{1/n} \geq \frac{2}{3} | K_{1}|^{1/n}$, while \eqref{Kl-2} implies $L_{K_1} \simeq 1$. Let $ K_{2} := \frac{K_{1}}{ |K_{1}|^{\frac{1}{n}} }$, then $L_{K_2} \simeq L_{K_1} \simeq 1$
and \begin{equation}
\label{phi-r-upper-1-1}
  \Phi_{{\bf r}}(L)= \Phi_{{\bf r}} (L+x) \leq
  \frac{3}{2} \Phi_{{\bf r}}(K_{1}) \leq \frac{9}{4} \Phi_{{\bf r}}(
  K_{2}).
\end{equation}
Affine invariance of $\Phi_{\bf r}$ (Theorem \ref{thm-aff}), allows us to assume that $K_{2}$ is isotropic.
Using \eqref{R2-iso}, \eqref{est-B}, $L_{K_2} \simeq 1$ and \eqref{est-B} one more time, we obtain
\begin{eqnarray*}
  \Phi_{{\bf r}}(K_{2}) 
  &= & \left( \int_{F_{{\bf r}}^{n}} \prod_{j=1}^{r} | P_{F_{j}} K_2|^{-i_{j+1}+i_{j-1}} d\xi_{\bf r} \right)^{-\frac{1}{i_{r} n}} \\ 
  &\leq & \left(\prod_{j=1}^{r} \left( \frac{n}{ i_{j}}\right)^{i_{j}(i_{j+1}-i_{j-1})} \right)^{\frac{1}{i_{r}n}} (cL_{K_{2}})^{\frac{1}{i_{r}n}\sum_{j=1}^{r} i_{j}(i_{j+1}-i_{j-1}) }\\ 
  & \leq & c \, L_{K_{2}} \Phi_{{\bf r}}(D_n)^{2} \\ 
  &\leq & c \, \sqrt{\frac{n}{i_{r}}} \, \Phi_{{\bf r}}(D_n).
\end{eqnarray*}
By \eqref{phi-r-upper-1-1} we have $ \Phi_{{\bf r}} (L) \leq c^{\prime} \sqrt{\frac{n}{i_{r}}} \, \Phi_{{\bf r}} (D_n) $, which together with \eqref{r-flag-upper-1} gives the upper bound \eqref{Phi-r-upper}.

Applying \eqref{RS for Psi} for $L^{\circ}$, \eqref{Phi-r-upper} and the Blaschke-Santal\'{o} inequality $r_L r_{L^{\circ}}\leq 1$, we get
\begin{eqnarray*}
	\Psi_{{\bf r}} (L) 
	&\geq & c\frac{ \Phi_{{\bf r}} (B_{2}^{n}) \Psi_{{\bf r}}(B_{2}^{n}) }{ \Phi_{{\bf r}} ( L^{\circ}) } \\ 
	&\geq & \frac{c}{r_{L^{\circ}}} \frac{1}{ \min\left\{ \log{(1+ d_{BM}(L^{\circ}))}, \sqrt{\frac{n}{i_r}}\right\}} \frac{\Phi_{{\bf r}} (B_{2}^{n}) \Psi_{{\bf r}}(B_{2}^{n}) }{ \Phi_{{\bf r}} ( B_{2}^{n})} \\ 
	& \geq & \frac{c}{ \min\left\{ \log{(1+ d_{BM}(L))}, \sqrt{\frac{n}{i_{r}}}\right\}} \Psi_{{\bf r}} ( r_{L} B_{2}^{n}),
\end{eqnarray*}
where we have also used the identity $d_{BM} (L^{\circ}) =  d_{BM}(L)$ for symmetric convex bodies. This proves \eqref{Psi-r-upper}.
\end{proof}



\section{Flag manifolds and permutations}

In this section, we discuss more general quantities involving
permutations. We investigate the extent to which $SL_{n}$-invariance
properties established by Furstenberg and Tzkoni \cite{FT} carry over
from ellipsoids to compact sets.  In particular, we provide an example
of a convex body for which $SL_{n}$-invariance fails. Nevertheless, we
show that for convex bodies, such quantities cannot be too degenerate
in the sense that they admit uniform upper and lower bounds,
independent of the body. The key ingredient is the notion of
$M$-ellipsoids, introduced by V. Milman \cite{Mil88}.

The next definition is motivated by the work of Furstenberg and Tzkoni
\cite{FT} for ellipsoids.

\begin{definition}
  Let $\Pi_{n}$ be the set of permutations of $\{1,2,\ldots,n\}$ and
  $\omega\in \Pi_{n}$.  For compact sets $L$ in $\mathbb{R}^n$, we
  define the {\it $\omega$-flag quermassintegral} and {\it $\omega$-flag dual
  quermassintegrals} as follows: if $\omega(n) \neq n$, then
\begin{equation}
\label{perm-Psi-def}
\Psi_{\omega} (L) := \left( \int_{F^{n}} \prod_{j=1}^{n-1} | L \cap
F_{j}|^{ \omega(j) - \omega(j+1)+1} d \xi \right)^{\frac{1}{ n(n-
    \omega(n))}}
\end{equation}
and
\begin{equation}
\label{perm-Phi-def-1}
\Phi_{\omega} (L) := \left( \int_{F^{n}} \prod_{j=1}^{n-1} | P_{F_{j}} L|^{- \omega(j) + \omega(j+1)-1} d \xi \right)^{-\frac{1}{n( n- \omega(n))}} .
\end{equation}
When $\omega(n)=n$, we set
\begin{equation}
\label{perm-Psi-def-2}
\Psi_{\omega} (L) := \int_{F^{n}} \prod_{j=1}^{n-1} | L \cap F_{j}|^{
  \omega(j) - \omega(j+1)+1} d \xi
\end{equation}
and 
\begin{equation}
\label{perm-Phi-def-2}
\Phi_{\omega} (L) := \int_{F^{n}} \prod_{j=1}^{n-1} | P_{F_{j}} L|^{-
  \omega(j) + \omega(j+1)-1} d \xi.
\end{equation}
\end{definition}

\noindent Note that
\begin{equation}
\label{perm-00-01}
\sum_{j=1}^{n-1} \left( \omega(j) - \omega(j+1) +1\right) =
n-\omega(n)+ \omega(1)-1
\end{equation}and
\begin{equation}
\label{perm-00-02}
\sum_{j=1}^{n-1} j\left( \omega(j) - \omega(j+1) +1\right) = n(n-\omega(n)).
\end{equation}


Identity \eqref{perm-00-02} guarantees that $\Psi_{\omega}(L)$ and $
\Phi_{\omega}(L)$ are $1$-homogeneous when $\omega(n)\neq n$ and
$0$-homogeneous when $ \omega(n)=n$.

The following fact for $\omega$-flag dual quermassintegrals is from \cite{FT}. For $\omega$-flag quermassintegrals it follows for example by duality.
\begin{theorem}
  \label{thm:FTperms}
  Let ${\cal{E}}$ be an ellipsoid in $\mathbb R^{n}$ and $\omega\in
   \Pi_{n}$.
\begin{equation}
  \label{perm-1}
  \Psi_{\omega} ({\cal{E}}) = \Psi_{\omega} (r_{\cal{E}}B_2^n) \ {\rm
    and} \ \Phi_{\omega} ({\cal{E}}) = \Phi_{\omega}
  (r_{\cal{E}}B_2^n).
\end{equation}
\end{theorem}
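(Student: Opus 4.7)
The plan is to reduce both equalities to an $SL_n$-invariance statement on the orbit of the Euclidean ball, and then to establish a Furstenberg-Tzkoni type integral identity on the complete flag manifold.

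First I would write an arbitrary ellipsoid as $\mathcal{E} = gB_2^n$ for some $g \in GL_n$. Setting $\lambda = |\det g|^{1/n}$, we have $r_{\mathcal{E}} = \lambda$ and $g = \lambda h$ where $h$ may be taken in $SL_n$ (any sign of $\det g$ is absorbed into an orthogonal factor, which preserves $B_2^n$). From \eqref{perm-00-02} one reads off that $\Psi_\omega$ and $\Phi_\omega$ are $1$-homogeneous when $\omega(n) \neq n$ and $0$-homogeneous otherwise. In either case the two claimed identities reduce to
\begin{equation*}
\Psi_\omega(hB_2^n) = \Psi_\omega(B_2^n) \quad \text{and} \quad \Phi_\omega(hB_2^n) = \Phi_\omega(B_2^n), \qquad h \in SL_n.
\end{equation*}

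For $\Psi_\omega$, I would execute the two changes of variables from Section~2. Substituting $F_j \mapsto hF_j$ on the flag manifold pulls out the local factors $\prod_{j=1}^{n-1} \sigma_j(h,F_j)^{\omega(j)-\omega(j+1)+1}$ via \eqref{eq_lcv}, together with the global Jacobian $\sigma_{F^{n}}(h,\xi) = \prod_{j=1}^{n-1} \sigma_j(h,F_j)^{-2}$, obtained from \eqref{F-S-02} specialized to $i_j=j$ (so $i_{j-1}-i_{j+1}=-2$). Since $|B_2^n \cap F_j| = \omega_j$ does not depend on $F_j$, the resulting identity becomes
\begin{equation*}
\int_{F^{n}} \prod_{j=1}^{n-1} |hB_2^n \cap F_j|^{\omega(j)-\omega(j+1)+1} d\xi = \Bigl(\prod_{j=1}^{n-1} \omega_j^{\omega(j)-\omega(j+1)+1}\Bigr) \, I_\omega(h),
\end{equation*}
where $I_\omega(h) := \int_{F^{n}} \prod_{j=1}^{n-1} \sigma_j(h,F_j)^{\omega(j)-\omega(j+1)-1} d\xi$. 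The prefactor equals the raw integral defining $\Psi_\omega^{n(n-\omega(n))}(B_2^n)$ (respectively $\Psi_\omega(B_2^n)$ itself when $\omega(n)=n$), so $\Psi_\omega$-invariance on the $SL_n$-orbit of $B_2^n$ is equivalent to $I_\omega \equiv 1$ on $SL_n$. The analogous argument, using the projection identity \eqref{multiplier-grinberg} in place of sections together with $|P_{F_j}B_2^n| = \omega_j$, reduces the $\Phi_\omega$ claim to the companion identity $\int_{F^{n}} \prod_j \sigma_j(h^t,F_j)^{-\omega(j)+\omega(j+1)-1} d\xi = 1$.

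The hard step is establishing these two integral identities. The key observation is that $h \mapsto I_\omega(h)$ is $SO_n$-bi-invariant: left invariance from $\sigma_j(kh, F_j) = \sigma_j(h, F_j)$ for $k \in SO_n$ (rotations preserve volumes on every subspace), and right invariance from $SO_n$-invariance of the Haar measure on $F^{n}$ after the substitution $F_j \mapsto k^{-1}F_j$. By the $KAK$ (Cartan) decomposition of $SL_n$ it therefore suffices to check $I_\omega(a) = 1$ for diagonal $a$ with $\det a = 1$. On such $a$, I would unfold the flag integral through the nested Grassmannian decomposition \eqref{basic-flags-1}; each inner integral is a Grassmannian spherical function of $a$, and the resulting product telescopes, using \eqref{perm-00-01}--\eqref{perm-00-02} to control the total exponent, down to a power of $\det a = 1$. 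This is the representation-theoretic core of \cite{FT}; crucially, it depends on $|B_2^n \cap F_j| = \omega_j$ being constant in $F_j$, which is precisely why the identity can fail for general compact sets, as the counterexample later in this section will demonstrate.
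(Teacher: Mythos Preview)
The paper does not give a proof of this theorem; it cites Furstenberg--Tzkoni \cite{FT} for the $\Psi_\omega$ identity and remarks that the $\Phi_\omega$ identity ``follows for example by duality'' (via $|P_F\mathcal{E}|\cdot|\mathcal{E}^\circ\cap F|=\omega_{\dim F}^2$ for ellipsoids, which turns $\Phi_\omega(\mathcal{E})$ into a constant multiple of $\Psi_\omega(\mathcal{E}^\circ)$). Your proposal is considerably more explicit: you carry out the change-of-variables reduction to the integral identity $I_\omega(h)=1$ on $SL_n$, and for $\Phi_\omega$ you invoke Grinberg's projection lemma \eqref{multiplier-grinberg} directly rather than duality. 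That is a genuine alternative: it reduces the $\Phi_\omega$ claim to the FT identity for the companion permutation $\omega'(j)=n+1-\omega(j)$ applied to $h^t$, whereas the paper's duality route reuses the identity for $\omega$ itself applied to the polar ellipsoid. Both routes are valid and equally short once \cite{FT} is in hand.

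Your reductions and the $SO_n$-bi-invariance observation are correct, but be careful with the sketch of the ``hard step.'' The inner integrals in the nested-Grassmannian decomposition do not telescope to a power of $\det a$, and the identities \eqref{perm-00-01}--\eqref{perm-00-02} govern only homogeneity, not the value of $I_\omega$. The actual argument in \cite{FT} identifies $I_\omega$ with a zonal spherical function on $GL_n/O_n$ and appeals to the representation-theoretic classification of those functions, where the permutation enters as a Weyl-group parameter. Since you ultimately defer to \cite{FT} for this step the overall argument survives, but the heuristic sketch should be replaced by a direct reference.
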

\noindent An equivalent formulation of the latter result is that for every
ellipsoid ${\cal{E}}$,
\begin{equation}
\label{perm-ell-1}
 \Psi_{\omega} ({\cal{E}}) = c_{\omega} |{\cal{E}}|^{\frac{1}{n}}, \ \omega(n)
 \neq n \ {\rm and } \ \Psi_{\omega} ({\cal{E}}) = c_{\omega} , \ \omega(n) =
 n,
 \end{equation}
where $c_{\omega} $ is a constant that depends only on $\omega$. An
analogous statement holds for $\Phi_{\omega}(\cal{E})$
(cf. \eqref{prop:BS_flag}).

The operators $\Psi_{\omega}$ and $\Phi_{\omega}$ are
generalizations of $\Psi_{{\bf r}}$ and $\Phi_{{\bf r}}$. 
Indeed, let $1\leq r \leq n-1$, $1\leq i_{1}< i_{2} < \cdots < i_{r}\leq n-1$ and
$ {\bf r}:= (i_{1}, \cdots , i_{r}). $ Define $\omega $ by $\omega (1) = n-i_{1}+1$, and  $\omega(t+1)= \omega(t) +1$ , for $ t\neq i_{j}$, $1\leq j \leq r$, and $\omega( i_{j}+1)= \omega(i_{j}) +1 - i_{j+1}+ i_{j-1}$ for $ 1\leq j \leq r$.  Then $\omega \in \Pi_n$ with $
\omega(i_{j} ) - \omega(i_{j}+1) +1 = i_{j+1}- i_{j-1}$ for $1\leq
j\leq r$ and $ \omega ( t) - \omega( t+1) +1 = 0 , \ t\neq i_{j}$, for
$1\leq j \leq r.$ Since $\omega(n) = n-i_{r}$, for a compact set $L$ in $\mathbb R^{n}$ we have
\begin{eqnarray*}
\Psi_{\omega} (L) 
&= & \left( \int_{F^{n} } \prod_{j=1}^{n-1} |L \cap F_{j}|^{\omega(j)-\omega(j+1)+1} d \xi \right)^{\frac{1}{ ni _{r}}}\\ 
&=& \left( \int_{F^{n} } \prod_{j=1}^{r} |L \cap F_{i_{j}}|^{i_{j+1}-i_{j-1}} d \xi \right)^{\frac{1}{ ni _{r}} } \\
&=& \Psi_{{\bf r}}(L),
\end{eqnarray*}
where, in the last equality, we have used \eqref{inv-proj}. Correspondingly, 
we also have $ \Phi_{\omega}(L) = \Phi_{{\bf r}} (L)$.  In particular, for this permutation $\omega$, $\Psi_{\omega}(L)$ is $SL_{n}$-invariant and $\Phi_{\omega}(L)$ is affine invariant. 
As a particular case of the preceding discussion, let $r=1$, $i_{1}= k$,
$1\leq k \leq n$ and let $\omega(1)= n-k+1$ and $\omega(t+1)=
\omega(t)+1$ for $t\neq k$ and $ \omega(k+1) = \omega(k) - n +1$. Then
$\Phi_{\omega}(L) = \Phi_{[k]}(L).$

Given that $\Psi_{{\bf r}}(L)$ and $\Phi_{{\bf r}}(L)$ enjoy
invariance properties and arise as permutations, it is natural to
investigate the extent to which the invariance from Theorem
\ref{thm:FTperms} carries over to compact sets. We do not have a
complete answer.  However, there are cases outside of those considered
above where the invariance holds and also counter-examples where it
fails as the next two examples show.

\begin{example} Let $n\geq 3$. Define $\omega$ by
$\omega(1)=2$, $\omega(2)=1$ and $\omega(t)=t$ for all $ 3\leq t \leq n$. Then for every symmetric compact set $L$ in $\mathbb R^{n}$,
$$ \Psi_{\omega}(L) = \frac{4}{\pi} . $$
In particular, $\Psi_{\omega}(L)$ is $SL_{n}$-invariant.  

Note that our choice of the permutation $\omega$ satisfies 
$$ \omega(1)-\omega(2)+1=2, \quad \omega(2)-\omega(3)+1=-1, \quad
  \omega(j)-\omega(j+1)+1=0 \text{ for } 3\leq j \leq n-1,$$ 
or equivalently
$$ \omega(2)=\omega(1)-1, \quad \omega(3)=\omega(1)+1, \quad
  \omega(j+1)=\omega(1)+(j-1) \text{ for } 3\leq j \leq n-1. $$
  Since $1\leq \omega(j)\leq n$ for all $j$, it follows that
  $\omega(1)=2$. Hence $\omega(1)=2, \omega(2)=1, \omega(j)=j$ for
  $3\leq j \leq n$ is the unique permutation with these
  properties. For an $k$-dimensional subspace $F_k$ of $\mathbb{R}^n$, denote by $S_{F_k}$ the unit sphere in $F_k$. Now, using \eqref{inv-proj}, we compute
\begin{align*}
\Psi_{\omega} (L) 
&= \int_{F^{n}} \prod_{j=1}^{n-1} |L \cap F_{j}|^{\omega(j)-\omega(j+1)+1} d\xi \\ 
&= \int_{F^{n}} |L \cap F_1|^2 |L \cap F_2|^{-1} d\xi \\ 
&= \int_{G_{n,2}}  |L \cap F_2|^{-1} \int_{G_{F_2,1}} |(L \cap F_2) \cap F_1|^2 dF_1 dF_2 \\
&= \int_{G_{n,2}}  |L \cap F_2|^{-1} \int_{S_{F_2}} (2 \rho_{L \cap F_2}(\theta))^2 d\sigma(\theta) dF_2 \\ 
&= \int_{G_{n,2}}  |L \cap F_2|^{-1} \frac{4}{|S_{F_2}|} \int_{S_{F_2}} \rho^2_{L \cap F_2}(\theta) d\theta \, dF_2 \\ 
&= \frac{4}{\pi} \int_{G_{n,2}}  |L \cap F_2|^{-1}  |L \cap F_2| dF_2 \\ 
&= \frac{4}{\pi} .
\end{align*}
\end{example}
\qed

When $n=3$, for permutations $\omega$ with $\omega(3)=3$,
$\Psi_{\omega}(L)$ are absolute constants. Moreover, the discussion
following Theorem \ref{thm:FTperms} shows that for $3$ of the
remaining $4$ permutations $\omega$ in $\Pi_3$,
$\Psi_{\omega}(L)=\Psi_{\bf r}(L)$. Altogether, for $n=3$, for 5 out
of 6 permutations $\omega$, $\Psi_{\omega}(L)$ are
$SL_n$-invariant. The next example shows that for the remaining
permutation, the invariance does not carry over for all convex bodies.

\begin{example} Let $\omega \in \Pi_3$ with $\omega(1)=1, \, \omega(2)=3$ and $\omega(3)=2$. We claim that for a centered cube $Q:=[-1,1]^3$ and the diagonal matrix $D=\mathop{\rm diag}(1,2,1/2)$,  $\Phi_{\omega}(DQ)>\Phi_{\omega}(Q)$. Since $D \in SL_3$, this shows that the operator $\Phi_{\omega}$ is not invariant under volume preserving transformations.

To show this, we first note that for any convex body $L \subset \mathbb R^{3}$
\begin{equation}
\label{ex-1}
\Phi^{-3}_{\omega}(L) = \int_{S^{2} } \frac{ W( P_{\phi^{\perp}}L)}{ h_{\Pi L}^{2} (\phi) } d\sigma (\phi).
\end{equation}
Recall that for $\theta \in S^{n-1}$, $h_Q(\theta)=\sum_{i=1}^n |\theta_i|$ and for $g\in GL_n$, $h_{gL}(\theta) = h_L(g^{t}\theta)$. 
We will also use the following facts about projection bodies (see e.g., Gardner). 
The projection body of a cube is again a cube, $\Pi Q = 2 Q$ and for $g\in GL_n$, 
\begin{equation}\label{ex-3}
\Pi (g L) = | {\rm det} g | \, g^{-t} \, \Pi L.
\end{equation}

Let $ A=[a_{1} \ a_{2} \ a_{3} ]  \in SL_3$ with columns $a_i$. Fix $ \phi\in S^{2}$. Let $U \in O_3$ be given in column form by $ U = [ u \ v \ \phi ] $. Since $ U $ is orthogonal, $ U^{t} \phi = e_{3} $ and $ U^{t} \phi^{\perp} = {\rm span}\{ e_{1}, e_{2} \} = \mathbb R^{2} $. 
Then
$$ W( P_{\phi^{\perp}} A Q)= \int_{S_{\phi^{\perp}}} h_{A Q } ( \theta) d\sigma (\theta ) = \int_{S^{1}} h_{AQ} ( U\theta ) d \sigma (\theta ) = 
 \int_{S^{1}} h_{Q} ( A^{t} U\theta ) d \sigma ( \theta ).$$ 
Thus denoting by $P$ the orthogonal projection onto $\mathbb R^{2}$, we have
$$ W( P_{\phi^{\perp}} A Q) 
= \sum_{i=1}^{3} \int_{S^{1}} | \langle \theta , U^{t} A e_{i} \rangle | d\sigma( \theta )
= \sum_{i=1}^{3} \int_{S^{1}} | \langle \theta , P U^{t} A e_{i} \rangle | d\sigma( \theta ) 
= \frac{2}{\pi} \sum_{i=1}^{3} \| P U^{t} A e_{i} \|_{2} .$$
We have that $ A e_{i} = a_{i}$, $ U^{t} a_{i} = \left( \langle u,
a_{i} \rangle , \langle v, a_{i} \rangle, \langle \phi, a_{i} \rangle\right)^{t} $ and
$$ \|  P U^{t} A e_{i} \|_{2}^{2} = \| U^{t} A e_{i} \|_{2}^{2} - \| (I-P) U^{t} A e_{i} \|_{2}^{2} = \|a_{i}\|_{2}^{2} - \langle \phi, a_{i} \rangle^{2} . $$
Therefore,
\begin{equation}
\label{ex-4}
 W( P_{\phi^{\perp}} A Q)= \frac{2}{\pi}  \sum_{i=1}^{3} \sqrt{ \|a_{i}\|_{2}^{2} -
   \langle \phi, a_{i} \rangle^{2} } .
\end{equation}
Moreover, $ h_{\Pi (A Q) } ( \phi) = h_{\Pi Q} ( A^{-1} \phi) = 2
\sum_{i=1}^{3} | \langle A^{-1} \phi, e_{i} \rangle | $. Thus
\begin{equation}
\label{ex-5}
\Phi^{-3}_{\omega}(AQ)
= \frac{1}{2\pi} \int_{S^{2}} \frac{ \sum_{i=1}^{3} \sqrt{ \|a_{i}\|_{2}^{2} - \langle \phi, a_{i} \rangle^{2} }}{\left(\sum_{j=1}^{3} | \langle A^{-1} \phi, e_{j} \rangle \right)^{2} } d\sigma ( \phi) .
\end{equation}
Set $ A:= {\rm diag} ( d_{1}, d_{2}, d_{3})$ with $\prod_{i=1}^{3} d_{i}= 1$ and $ d_{i} >0$. Then the quantity
\begin{equation}
\label{ex-6} {\cal{A}}( d_{1}, d_{2}, d_{3}) := \int_{S^{2}} \frac{\sum_{i=1}^{3} d_{i} \sqrt{ 1- \phi_{i}^{2} }}{  \left(\sum_{j=1}^{3}\frac{|\phi_{j}|}{ d_{j}}\right)^{2} } d\sigma ( \phi) 
\end{equation}
is not constant. Indeed, using MATLAB for example, one can verify that
${\cal{A}}(1,2,1/2)<{\cal{A}}(1,1,1)$.
\end{example}
\qed

In the case of convex bodies, the quantities $\Psi_{\omega}(K)$,
$\Phi_{\omega}(K)$ are uniformly bounded.  We will use the following
well-known consequence of the celebrated ``existence of M-ellipsoids" by
V. Milman \cite{Mil88}.



\begin{theorem}
  Let $K$ be a symmetric convex body in $\mathbb R^{n}$. Then there
  exists an ellipsoid ${\cal{E}}$ such that $|{\cal{E}}|^{1/n}
  \leq e^{c} |K|^{1/n}$ and for every $F\in G_{n,k}$,
  \begin{equation}
    \label{M-position-2}
    | P_{F} {\cal{E}}|  \leq  | P_{F} K | \leq e^{cn} | P_{F} {\cal{E}}|
  \end{equation}and 
  \begin{equation}
    \label{M-position-3}
    |  {\cal{E}} \cap F|  \leq  | K \cap F | \leq e^{cn} | {\cal{E}} \cap F|,
  \end{equation}
  where $c>0$ is an absolute constant.
\end{theorem}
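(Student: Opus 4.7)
The plan is to invoke V.~Milman's celebrated M-ellipsoid theorem from \cite{Mil88} (in the regular form of Pisier): for every symmetric convex body $K\subseteq\mathbb R^n$ there exists an ellipsoid $\mathcal{E}_M$ with $|\mathcal{E}_M|=|K|$ and an absolute constant $c_0>0$ such that the four covering numbers
\[
N(K,\mathcal{E}_M),\qquad N(\mathcal{E}_M,K),\qquad N(K^{\circ},\mathcal{E}_M^{\circ}),\qquad N(\mathcal{E}_M^{\circ},K^{\circ})
\]
are all bounded by $e^{c_0 n}$. The conclusion will follow by extracting projection and section volume comparisons from these estimates, and then taking $\mathcal{E}$ to be a suitable scalar multiple of $\mathcal{E}_M$ so that the various absolute constants are absorbed into the factor $e^{cn}$.

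First I would handle the projection bounds. Choose $x_1,\ldots,x_N$ witnessing $K\subseteq\bigcup_{i=1}^N (x_i+\mathcal{E}_M)$ with $N\leq e^{c_0 n}$. Projecting to $F$, one obtains $P_FK\subseteq\bigcup_i(P_Fx_i+P_F\mathcal{E}_M)$, so $|P_FK|\leq e^{c_0 n}\,|P_F\mathcal{E}_M|$. The reverse inequality $|P_F\mathcal{E}_M|\leq e^{c_0 n}\,|P_FK|$ follows analogously from $N(\mathcal{E}_M,K)\leq e^{c_0 n}$.

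Next, the section bounds are obtained by duality. Since $K$ is centered, $(K\cap F)^{\circ}=P_FK^{\circ}$ (polars taken inside $F$); applying the projection estimate just proved to $K^{\circ}$ and $\mathcal{E}_M^{\circ}$ yields $|(K\cap F)^{\circ}|\leq e^{c_0 n}\,|(\mathcal{E}_M\cap F)^{\circ}|$. Because $\mathcal{E}_M\cap F$ is itself an ellipsoid, $|\mathcal{E}_M\cap F|\,|(\mathcal{E}_M\cap F)^{\circ}|=\omega_k^2$ exactly; combined with the Bourgain-Milman inequality $|K\cap F|\,|(K\cap F)^{\circ}|\geq c^k\omega_k^2$, this produces $|\mathcal{E}_M\cap F|\leq c^{-k}e^{c_0 n}|K\cap F|\leq e^{c_1 n}|K\cap F|$. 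The reverse inequality $|K\cap F|\leq e^{c_1 n}|\mathcal{E}_M\cap F|$ is obtained in the same way using $N(\mathcal{E}_M^{\circ},K^{\circ})\leq e^{c_0 n}$ together with the Blaschke-Santal\'o inequality applied to $K\cap F$.

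Collecting, $\mathcal{E}_M$ satisfies all four inequalities with factor $e^{cn}$ for an absolute $c$, and $|\mathcal{E}_M|^{1/n}=|K|^{1/n}$. Choosing $\mathcal{E}:=\lambda\mathcal{E}_M$ with $\lambda$ an appropriate absolute constant puts the conclusions into the form displayed in the statement. The main obstacle is, of course, the M-ellipsoid theorem itself---V.~Milman's quotient-of-subspace construction and iteration---which we invoke as a black box; once it is available, the translation of covering numbers into projection and section comparisons is routine, with the only real subtlety being the dualization step that relies on Bourgain-Milman to handle the section direction.
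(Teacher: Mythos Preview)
The paper does not give a proof of this theorem at all; it is simply quoted as a ``well-known consequence of the celebrated `existence of $M$-ellipsoids' by V.~Milman'' and then used as a black box in the subsequent corollary. So there is no argument in the paper against which to compare your proposal --- you are supplying something the paper omits.

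Your derivation of the two-sided estimates
\[
e^{-cn}\,|P_F\mathcal{E}_M|\le|P_FK|\le e^{cn}\,|P_F\mathcal{E}_M|,\qquad
e^{-cn}\,|\mathcal{E}_M\cap F|\le|K\cap F|\le e^{cn}\,|\mathcal{E}_M\cap F|
\]
from the covering-number form of the $M$-ellipsoid theorem is correct and is the standard route: projections are read off directly from the covering, and sections are obtained via the duality $(K\cap F)^{\circ}=P_FK^{\circ}$ together with Blaschke--Santal\'o and Bourgain--Milman. This two-sided form (with $|\mathcal{E}_M|=|K|$) is in fact all that the paper's application requires.

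The one step that does not work as written is the final rescaling. Choosing $\mathcal{E}=\lambda\mathcal{E}_M$ with an \emph{absolute} constant $\lambda$ cannot convert the symmetric $e^{\pm cn}$ estimates into the asymmetric form $|P_F\mathcal{E}|\le|P_FK|\le e^{cn}|P_F\mathcal{E}|$ displayed in the statement. To force $\lambda^k|P_F\mathcal{E}_M|\le|P_FK|$ from the lower bound $|P_FK|\ge e^{-c_0n}|P_F\mathcal{E}_M|$ you would need $\lambda^k\le e^{-c_0n}$ for every $1\le k\le n-1$; taking $k=1$ forces $\lambda\le e^{-c_0n}$, and then the upper bound $|P_FK|\le e^{c_0n}\lambda^{-k}|P_F\mathcal{E}|$ degrades to $e^{c_0n(k+1)}$, which is of order $e^{cn^2}$ when $k$ is close to $n$. (Note that the $k=1$ condition $|P_\theta\mathcal{E}|\le|P_\theta K|$ for all directions $\theta$ already forces $\mathcal{E}\subseteq K$, so any ellipsoid satisfying the left-hand inequalities for all $k$ must lie inside $K$.) This is a cosmetic rather than a substantive gap: either the statement is slightly informal and the two-sided version with $|\mathcal{E}_M|=|K|$ is what is meant --- which suffices for the corollary, at the cost of absorbing one more absolute constant into the exponent --- or one must invoke a sharper form of Milman's theorem than the bare covering-number bound to obtain the asymmetric display exactly.
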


\begin{corollary}
  \label{M-FT}
  Let $\omega\in \Pi_{n}$ such that $\omega(n)\neq n$. Let $
  \delta_{\omega} (j) := \omega(j) - \omega(j+1) + 1$. Set
  $$I_{\omega} := \{ j\leq n : \delta_{\omega} ( j) \geq 0 \} \ {\rm and }
  \ \Delta(\omega):=\frac{\min\{ \sum_{j\in I_{\omega} }
    \delta_{\omega}(j) , \sum_{j\in I_{\omega}^{c} }
    |\delta_{\omega}(j)|\} }{n - \omega(n)} + 1 . $$ We have that
\begin{equation}
\label{M-position-6}
 e^{ -c \Delta( \omega) } c_{\omega} | K|^{\frac{1}{n}} \leq
 \Psi_{\omega} (K) \leq e^{ c \Delta( \omega) } c_{\omega} |
 K|^{\frac{1}{n}}
\end{equation}
and
\begin{equation}
\label{M-position-5}
e^{ - c \Delta( \omega)} c_{\omega} | K|^{\frac{1}{n}} \leq
\Phi_{\omega} (K) \leq e^{ c \Delta( \omega)} c_{\omega} |
K|^{\frac{1}{n}}
\end{equation}
where $c>0$ is an absolute constant.
\end{corollary}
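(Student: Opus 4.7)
The plan is to use the existence of M-ellipsoids to replace $K$ by an ellipsoid $\mathcal{E}$ for which the Furstenberg--Tzkoni identity $\Psi_\omega(\mathcal{E}) = c_\omega|\mathcal{E}|^{1/n}$ from Theorem~\ref{thm:FTperms} holds exactly. The two-sided M-ellipsoid bounds on sections and projections then transfer the corresponding estimates to $K$ at the cost of an exponential factor governed by the ``imbalance'' between the positive and negative exponents $\delta_\omega(j)$.

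I would first fix $\mathcal{E}$ as the M-ellipsoid of $K$ from the theorem preceding the corollary, so that $|\mathcal{E}|^{1/n}\simeq|K|^{1/n}$ up to an absolute constant, and for every subspace $F$ one has the two-sided bound $|\mathcal{E}\cap F|\leq|K\cap F|\leq e^{cn}|\mathcal{E}\cap F|$ (and the analogous bound for $|P_F\mathcal{E}|$ versus $|P_FK|$). Writing
\[
\Psi_\omega^{n(n-\omega(n))}(K)=\int_{F^n}\prod_{j=1}^{n-1}|K\cap F_j|^{\delta_\omega(j)}\,d\xi,
\]
split the product according to the sign of $\delta_\omega(j)$. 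For the upper bound, apply $|K\cap F_j|^{\delta_\omega(j)}\leq e^{cn\delta_\omega(j)}|\mathcal{E}\cap F_j|^{\delta_\omega(j)}$ on $I_\omega$ and $|K\cap F_j|^{\delta_\omega(j)}\leq|\mathcal{E}\cap F_j|^{\delta_\omega(j)}$ on $I_\omega^c$; for the lower bound reverse the two choices. This yields the pointwise sandwich
\[
e^{-cn\sum_{I_\omega^c}|\delta_\omega(j)|}\prod_{j}|\mathcal{E}\cap F_j|^{\delta_\omega(j)}\;\leq\;\prod_{j}|K\cap F_j|^{\delta_\omega(j)}\;\leq\;e^{cn\sum_{I_\omega}\delta_\omega(j)}\prod_{j}|\mathcal{E}\cap F_j|^{\delta_\omega(j)},
\]
which integrates termwise.

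Taking $n(n-\omega(n))$-th roots and invoking Theorem~\ref{thm:FTperms} together with $|\mathcal{E}|^{1/n}\simeq|K|^{1/n}$ gives estimates of the form $e^{\pm c(S_\pm/(n-\omega(n))+1)}c_\omega|K|^{1/n}$, where $S_+=\sum_{I_\omega}\delta_\omega(j)$ and $S_-=\sum_{I_\omega^c}|\delta_\omega(j)|$. At this point the identity \eqref{perm-00-01} gives $S_+-S_-=n-\omega(n)+\omega(1)-1$, so the two exponents differ by a controlled amount; matching them to $\Delta(\omega)=\min(S_+,S_-)/(n-\omega(n))+1$ yields \eqref{M-position-6}. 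The proof of \eqref{M-position-5} for $\Phi_\omega$ is entirely analogous, simply replacing the section bounds \eqref{M-position-3} by the projection bounds \eqref{M-position-2}.

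The main technical point is the bookkeeping that converts the asymmetric exponents $S_+$ and $S_-$ produced by the M-ellipsoid inequalities into the symmetric quantity $\Delta(\omega)$; this is exactly where the identity \eqref{perm-00-01} (and the fact that $\omega(n)\neq n$, so that $n-\omega(n)\geq 1$) is used. Everything else is a direct application of the one-sided M-ellipsoid inequalities of V.~Milman, combined with the Furstenberg--Tzkoni identity which reduces the computation for $\mathcal{E}$ to a scalar multiple of $|\mathcal{E}|^{1/n}$.
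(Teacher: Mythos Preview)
Your overall strategy—pass to an $M$-ellipsoid $\mathcal{E}$, split the product over $I_\omega$ and $I_\omega^c$, bound each factor using \eqref{M-position-3} (resp.\ \eqref{M-position-2}), and then invoke the Furstenberg--Tzkoni identity \eqref{perm-ell-1}—is exactly the paper's approach, and your pointwise sandwich is the same as theirs.

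The gap is in your last paragraph. From the $M$-ellipsoid inequalities as stated you obtain the upper bound with exponent $c\bigl(S_+/(n-\omega(n))+1\bigr)=c(\Delta_+(\omega)+1)$ and the lower bound with exponent $-c(\Delta_-(\omega)+1)$; this part is correct. But your claim that \eqref{perm-00-01} shows ``the two exponents differ by a controlled amount'' is not right: the identity gives
\[
\frac{S_+-S_-}{\,n-\omega(n)\,}=1+\frac{\omega(1)-1}{\,n-\omega(n)\,},
\]
and the last fraction can be as large as $n-1$ (take $\omega(1)=n$ and $\omega(n)=n-1$). So you cannot pass from $\Delta_+(\omega)$ to $\min(\Delta_+,\Delta_-)$ in the upper bound by absorbing the difference into the absolute constant $c$. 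Concretely, for the permutation giving $\Psi_\omega=\Psi_{[1]}$ one has $S_-=0$ and $S_+=n$, so your upper bound reads $e^{c(n+1)}$ while the claimed bound is $e^{c}$.

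The paper does not use \eqref{perm-00-01} here at all. It simply asserts that ``a similar inequality with the quantity $\Delta_-(\omega)$ holds as well,'' i.e.\ that one can \emph{independently} derive the upper bound with exponent $\Delta_-(\omega)$ (and, symmetrically, the lower bound with $\Delta_+(\omega)$), after which taking the better of the two yields $\Delta(\omega)$. That second derivation is not the algebraic matching you attempt; it requires running the same argument with the two one-sided $M$-ellipsoid inequalities interchanged. So your proof is on the right track but the final ``matching'' step needs to be replaced by this second, parallel application of the $M$-ellipsoid bounds rather than by an appeal to \eqref{perm-00-01}.
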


\begin{proof}
Set $ \Delta_{+}(\omega) := \frac{ \sum_{j\in I_{\omega} }
  \delta_{\omega}(j) }{n - \omega(n)}$ and $ \Delta_{-} := ( \omega)
\frac{ \sum_{j\in I_{\omega}^{c}} |\delta_{\omega}(j) |}{n - \omega(n)}$. Using \eqref{M-position-3} and \eqref{perm-00-01}, we have
\begin{eqnarray*}
  \Psi_{\omega} (K) &:=& \left( \int_{F_{n}} \prod_{j=1}^{n-1} | K
  \cap F_{j}|^{ \omega(j) - \omega(j+1)+1} d \xi \right)^{\frac{1}{
      n(n- \omega(n))}}\\ &\leq & \left( \int_{F_{n}} e^{cn \sum_{j\in
      I_{\omega}} \omega(j) - \omega(j+1) +1} \prod_{j=1}^{n-1} |
       {\cal{E}} \cap F_{j}|^{ \omega(j) - \omega(j+1)+1} d \xi
       \right)^{\frac{1}{ n(n- \omega(n))}} \\ &\leq & e^{
         c\Delta_{+}(\omega) } \left( \int_{F_{n}} \prod_{j=1}^{n-1} |
             {\cal{E}} \cap F_{j}|^{ \omega(j) - \omega(j+1)+1} d \xi
             \right)^{\frac{1}{ n(n- \omega(n))}}\\ &=&e^{
               c\Delta_{+}(\omega) }c_{\omega} |
                   {\cal{E}}|^{\frac{1}{n}} \\ &\leq & e^{ c (
                     \Delta_{+}( \omega) + 1 )}c_{\omega} | K |^{\frac{1}{ n}}.
\end{eqnarray*}
  
One can verify that a similar inequality with the quantity $
\Delta_{-} (\omega)$ holds as well, which leads to the right-hand side
in \eqref{M-position-6}. The proof of the other inequalities is
identical and hence is omitted.
\end{proof}

\begin{remark} A similar proposition (with the same proof) holds for the case
$\omega(n)=n$. Moreover, using Pisier's regular M-position (see
  \cite{Pis89}) one can get more precise estimates.
  \end{remark}

\section{Functional forms}
\label{section:functions}

In this section we derive functional forms of some of the previous
geometric inequalities. Note that the proofs of these functional
inequalities do not depend on the geometric inequalities. They are
much more general. The invariance of functional inequalities on flag
manifolds can be proved directly using the structure theory of
semi-simple Lie groups as was done in our previous work.

\subsection{Functional forms of dual ${\bf r}$-flag quermassintegrals. }

Let $ f $ be a bounded integrable function on $\mathbb{R}^{n}$. We denote by $I(f)$ the functional form of the dual ${\bf r}$-flag quermassintegral
$$ I(f) := \int_{ F_{\bf r}^n } \prod_{j=1}^{r} \| f |_{F_j}\|^{i_{j+1}-i_{j-1}} d\xi_{\bf r} . $$

\begin{theorem}\label{th_fi}
For  every $ g \in SL_{n}$, $ I( g\cdot f) = I( f)$.
\end{theorem}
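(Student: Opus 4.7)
The plan is to mirror the proof of $SL_{n}$-invariance of $\Psi_{{\bf r}}$ from Section~2 verbatim, replacing the sectional volume $|L \cap F_{j}|$ by its natural functional analogue $\|f|_{F_{j}}\|$, which I take to be the $L^{1}$-integral of $f$ over $F_{j}$ with respect to $i_{j}$-dimensional Lebesgue measure (so that $\|\mathbf{1}_{L}|_{F_{j}}\| = |L\cap F_{j}|$ recovers the geometric case). The proof rests on two changes of variables: a ``local'' one on each subspace and a ``global'' one on the flag manifold, coupled through the Furstenberg--Tzkoni Jacobian \eqref{F-S-02}.

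First, with the action $(g\cdot f)(x) := f(g^{-1}x)$ for $g\in SL_{n}$, I would apply the local change of variables \eqref{eq_lcv} to the restriction of $f$ to $F_{j}$, obtaining
\begin{equation*}
\|(g\cdot f)|_{gF_{j}}\| \;=\; \int_{gF_{j}} f(g^{-1}x)\,dx \;=\; |\det(g|_{F_{j}})|\int_{F_{j}} f(y)\,dy \;=\; \sigma_{i_{j}}(g, F_{j})\,\|f|_{F_{j}}\|.
\end{equation*}
This is the key transformation rule: $\|f|_{F_{j}}\|$ transforms under $g$ precisely by the factor $\sigma_{i_{j}}(g,F_{j})$, just as $|L\cap F_{j}|$ did in the geometric argument.

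Next, I would apply the global change of variables \eqref{eq_gcv} on $F_{{\bf r}}^{n}$ with Jacobian $\sigma_{F_{{\bf r}}^{n}}(g,\xi_{{\bf r}}) = \prod_{j=1}^{r}\sigma_{i_{j}}^{-i_{j+1}+i_{j-1}}(g,F_{j})$ from \eqref{F-S-02}, and substitute the local identity above:
\begin{align*}
I(g\cdot f) &= \int_{F_{{\bf r}}^{n}} \prod_{j=1}^{r} \|(g\cdot f)|_{F_{j}}\|^{\,i_{j+1}-i_{j-1}}\,d\xi_{{\bf r}} \\
&= \int_{F_{{\bf r}}^{n}} \prod_{j=1}^{r} \|(g\cdot f)|_{gF_{j}}\|^{\,i_{j+1}-i_{j-1}}\,\sigma_{F_{{\bf r}}^{n}}(g,\xi_{{\bf r}})\,d\xi_{{\bf r}} \\
&= \int_{F_{{\bf r}}^{n}} \prod_{j=1}^{r} \sigma_{i_{j}}^{\,i_{j+1}-i_{j-1}}(g,F_{j})\,\|f|_{F_{j}}\|^{\,i_{j+1}-i_{j-1}} \prod_{j=1}^{r} \sigma_{i_{j}}^{-i_{j+1}+i_{j-1}}(g,F_{j})\,d\xi_{{\bf r}} \\
&= \int_{F_{{\bf r}}^{n}} \prod_{j=1}^{r} \|f|_{F_{j}}\|^{\,i_{j+1}-i_{j-1}}\,d\xi_{{\bf r}} \;=\; I(f).
\end{align*}
The $\sigma$-factors cancel exactly, which is precisely the design principle behind Furstenberg--Tzkoni's formula \eqref{F-S-02}: the exponents $-i_{j+1}+i_{j-1}$ in the global Jacobian are tailored to neutralise the local volumetric distortions $\sigma_{i_{j}}(g,F_{j})$ raised to the power $i_{j+1}-i_{j-1}$.

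There is no substantive obstacle, since this is essentially a rewriting of the proof of $SL_{n}$-invariance of $\Psi_{{\bf r}}$ from Section~2 with $\mathbf{1}_{L}$ replaced by $f$. The only point that deserves to be recorded is that the exponent $1$ of $\sigma_{i_{j}}$ in the local transformation rule is exactly what makes the cancellation work; had $\|f|_{F_{j}}\|$ been interpreted as an $L^{p}$ norm for $p \neq 1$, the Jacobians would no longer balance and the invariance would fail. With the $L^{1}$ interpretation, \eqref{sln-inv} extends directly to $I(f)$.
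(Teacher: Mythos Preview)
Your proof is correct and follows essentially the same approach as the paper: a global change of variables on $F_{{\bf r}}^{n}$ via \eqref{eq_gcv} and \eqref{F-S-02}, combined with the local change of variables \eqref{eq_lcv} giving $\|(g\cdot f)|_{gF_{j}}\| = \sigma_{i_{j}}(g,F_{j})\|f|_{F_{j}}\|$, so that the $\sigma$-factors cancel exactly. One small aside: your closing remark that the invariance would fail for $L^{p}$ norms with $p\neq 1$ is correct for the fixed exponents $i_{j+1}-i_{j-1}$, but the paper notes immediately after the proof that invariance persists if the exponents are adjusted so that $\alpha_{j}/p_{j} = i_{j+1}-i_{j-1}$.
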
  

\begin{proof}
Starting with the left-hand side, $I( g\cdot f)$, we do a global change of variables \eqref{eq_gcv} on the flag manifold:
\begin{align*}
I( g\cdot f) 
&= \int_{ F_{\bf r}^n } \prod_{j=1}^{r} \| g\cdot f |_{F_j}\|^{i_{j+1}-i_{j-1}} d\xi_{\bf r} \\
&= \int_{ F_{\bf r}^n } \prod_{j=1}^{r} \| g\cdot f |_{g\cdot F_j}\|^{i_{j+1}-i_{j-1}} \sigma_{F_{{\bf r}}^{n} }(g, \xi) d\xi_{\bf r}, \\ 
\end{align*}
where by \eqref{F-S-02} $\sigma_{F_{{\bf r}}^{n} }(g, \xi) = \sigma_{i_{1}}^{-i_{2}}(g, F_1) \sigma_{i_{2}}^{i_{1}-i_{3}}(g, F_2) \cdots \sigma_{i_{r}}^{i_{r-1}-n}(g, F_r) $.
Now we do $r$ local changes of variables \eqref{eq_lcv} on each nested subspace $F_j$ in the product. For each $1\leq j \leq r$, we thus have 
$$ \| g\cdot f|_{g\cdot F_j}\|=\|f|_{F_j}\| \, \sigma_{i_j}(g,F_j).  $$
For the product under the integral, we obtain: 
$$ \prod_{j=1}^{r} \| g\cdot f |_{g\cdot F_j}\|^{i_{j+1}-i_{j-1}} = \prod_{j=1}^{r}  \left( \|f|_{F_j}\| \, \sigma_{i_j}(g,F_j) \right)^{i_{j+1}-i_{j-1}} = \prod_{j=1}^{r} \|f|_{F_j}\|^{i_{j+1}-i_{j-1}} \sigma_{F_{{\bf r}}^{n} }^{-1}(g, \xi). $$
\end{proof}
It is not hard to generalize this result in several ways as was done in \cite{DPP} for functional forms of dual quermassintegrals. Instead of taking $L_1(F_j)$ norms one can take $L_{p_j}(F_j)$ norms and replace the powers $i_{j+1}-i_{j-1}$ by $\alpha_j$. As long as $\frac{\alpha_j}{p_j}=i_{j+1}-i_{j-1}$ and the integrals exist, the conclusion of the Theorem \ref{th_fi} will hold. Theorem \ref{th_fi} also generalizes to a product of $m$ functions. This allows to replace $ \| f |_{F_j}\|^{i_{j+1}-i_{j-1}} $ by $ \prod_{i=1}^m \|  f_i |_{E_j} \|^{\alpha_{i,j}}_{p_{i,j}} $. For the Theorem \ref{th_fi} to hold in this case, we have to require $\sum_{i=1}^m \frac{\alpha_{i,j}}{p_{i,j}} = i_{j+1}-i_{j-1}$. Another way to generalize functional forms of dual quermassintegrals is to replace 
$$\| f |_{F_j}\|^{i_{j+1}-i_{j-1}} \quad \text{ by } \quad \frac{ \| f |_{F_j} \|^{\alpha_j}_{p_j}}{{\| g |_{F_j}\|^{\beta_j}_{q_j}}} \quad \text{ with } \quad \frac{\alpha_j}{p_j} - \frac{\beta_j}{q_j} = i_{j+1}-i_{j-1}, $$ 
to ensure they remain invariant under volume preserving transformations. Letting $\beta_j \to \infty$ modifies the integrand to $\frac{ \| f |_{F_j} \|^{\alpha_j}_{p_j}}{{\| f |_{F_j}\|^{\beta_j}_{\infty}}}$ and the condition on the powers and norms to $\frac{\alpha_j}{p_j} = i_{j+1}-i_{j-1} $. Note that in this case the invariance holds for arbitrary powers $\beta_j$. As a particular case this proves invariance under volume preserving transformations of the integrand appearing in the next theorem. One can also take the quotient of products of functions, replacing  
$$\| f |_{F_j}\|^{i_{j+1}-i_{j-1}} \quad \text{ by } \quad \frac{ \prod_{i=1}^m \|  f_i |_{F_j} \|^{\alpha_{i,j}}_{p_{i,j}} }{ \prod_{l=1}^{m^{\prime}} \|  g_l |_{F_j} \|^{\beta_{l,j}}_{q_{l,j}} } \quad \text{ with } \quad \sum_{i=1}^m \frac{\alpha_{i,j}}{p_{i,j}} - \sum_{l=1}^{m^{\prime}} \frac{\beta_{l,j}}{q_{l,j}}=i_{j+1}-i_{j-1}. $$ 
Here again we can let $q_{l,j} \rightarrow \infty$, obtaining the corresponding generalization with no restrictions on $\beta_{l,j}$.

\begin{theorem}
  Let $f$ be a non-negative bounded integrable function on $\mathbb
  R^n$, then
$$ \int_{F_{{\bf r}}^n} \prod_{j=1}^r
  \frac{\|f|_{F_j}\|^{i_{j+1}-i_{j-1}}_1}{\|f|_{F_j}\|^{i_{j+1}-i_{j}}_{\infty}}
  d \xi_{\bf r} \leq \prod_{j=1}^r
  \frac{\omega_{i_j}^{i_{j+1}}}{\omega_{i_{j+1}}^{i_j}}
  \|f\|_1^{i_r}. $$
\end{theorem}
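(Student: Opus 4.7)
The plan is to reduce the flag-manifold integral to nested Grassmannian integrals via Proposition \ref{basic-flags-1} and then peel off one Grassmannian at a time using the following functional form of the Busemann-Straus-Grinberg inequality: for any non-negative bounded integrable $h$ on an $m$-dimensional subspace $F$ and $1\le k \le m-1$,
\begin{equation*}
\int_{G_{F, k}} \frac{\|h|_E\|_1^m}{\|h|_E\|_\infty^{m-k}}\, dE \leq \frac{\omega_k^m}{\omega_m^k} \|h\|_1^k.
\end{equation*}
For indicator functions this is exactly Grinberg's inequality \eqref{eq-00-1} (with $\|1_K|_E\|_\infty=1$); the general case reduces to the set statement either by a layer-cake decomposition, or by applying Busemann-Straus-Grinberg in $F\times\mathbb{R}$ to the $(m+1)$-dimensional body under $h$, as in \cite{DPP}.

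Using \eqref{basic-flags-1}, the integral on the left equals
\begin{equation*}
\int_{G_{n,i_r}}\int_{G_{F_r,i_{r-1}}}\!\!\cdots\!\int_{G_{F_2,i_1}} \prod_{j=1}^r \frac{\|f|_{F_j}\|_1^{i_{j+1}-i_{j-1}}}{\|f|_{F_j}\|_\infty^{i_{j+1}-i_j}}\, dF_1 \cdots dF_r.
\end{equation*}
The only factor depending on $F_1$ is $\frac{\|f|_{F_1}\|_1^{i_2}}{\|f|_{F_1}\|_\infty^{i_2-i_1}}$ (using $i_0=0$). Applying the lemma with $h=f|_{F_2}$, $m=i_2$, $k=i_1$ bounds the $F_1$-integral by $\frac{\omega_{i_1}^{i_2}}{\omega_{i_2}^{i_1}}\|f|_{F_2}\|_1^{i_1}$. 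Crucially, this combines with the $j=2$ factor $\frac{\|f|_{F_2}\|_1^{i_3-i_1}}{\|f|_{F_2}\|_\infty^{i_3-i_2}}$ to produce $\frac{\|f|_{F_2}\|_1^{i_3}}{\|f|_{F_2}\|_\infty^{i_3-i_2}}$, which has exactly the same shape as the original $F_1$-integrand, so the lemma applies again on $G_{F_3,i_2}$.

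Iterating $r$ times, each step picks up a constant $\frac{\omega_{i_j}^{i_{j+1}}}{\omega_{i_{j+1}}^{i_j}}$, and the final step (with $i_{r+1}=n$ and $\|f|_{\mathbb{R}^n}\|_1=\|f\|_1$) produces the factor $\|f\|_1^{i_r}$. Multiplying the accumulated constants gives the claimed bound $\prod_{j=1}^r \frac{\omega_{i_j}^{i_{j+1}}}{\omega_{i_{j+1}}^{i_j}}\|f\|_1^{i_r}$.

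The main obstacle is establishing the functional Busemann-Grinberg lemma in precisely the stated form, with $\|h|_E\|_\infty$ (rather than the global $\|h\|_\infty$) appearing inside the integral. This sharp pointwise form is essential: it is exactly what allows the induction to close on the simple shape $\frac{\|f|_{F_j}\|_1^{i_{j+1}}}{\|f|_{F_j}\|_\infty^{i_{j+1}-i_j}}$, whereas placing $\|h\|_\infty$ outside the integral would destroy the telescoping cancellation of exponents. Once the lemma is in hand, the iteration itself is bookkeeping and mirrors the structure of the proof of Proposition \ref{basic-ineq}.
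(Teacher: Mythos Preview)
Your proposal is correct and follows essentially the same approach as the paper: both use the nested-Grassmannian decomposition \eqref{basic-flags-1} and then iterate the functional Busemann--Straus--Grinberg inequality from \cite{DPP} (your displayed lemma is exactly the paper's \eqref{eqn:DPP}), peeling off one Grassmannian at a time so that the exponents telescope. Your remark that the sharp form with $\|h|_E\|_\infty$ inside the integral is what makes the induction close is precisely the point; the paper simply cites \cite{DPP} for this inequality and proceeds with the same bookkeeping.
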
 

\begin{proof}
The result follows by iteration of an inequality on $G_{n,k}$ for one
function from our previous work \cite{DPP}:
\begin{equation}
  \label{eqn:DPP}
  \int_{G_{n,k}}\frac{\|f|_{E}\|^{n}_1}{\|f|_{E}\|^{n-k}_{\infty}} dE
  \leq \frac{\omega_k^{n}}{\omega_n^k}\|f\|^k_1 .
\end{equation}
\noindent
Applying the latter inequality repeatedly, we get
\begin{align*}
  \int_{F_{{\bf r}}^n} &\prod_{j=1}^r
  \frac{\|f|_{F_j}\|^{i_{j+1}-i_{j-1}}_1}{\|f|_{F_j}\|^{i_{j+1}-i_{j}}_{\infty}}
  d \xi_{\bf r} \\ &= \int_{G_{n,i_r}} \cdots \int_{G_{F_3,i_2}}
  \prod_{j=2}^r
  \frac{\|f|_{F_j}\|^{i_{j+1}-i_{j-1}}_1}{\|f|_{F_j}\|^{i_{j+1}-i_{j}}_{\infty}}
  \int_{G_{F_2,i_1}}
  \frac{\|f|_{F_1}\|^{i_{2}}_1}{\|f|_{F_1}\|^{i_{2}-i_{1}}_{\infty}}
  dF_1 dF_2 \ldots dF_r\\ &\leq
  \frac{\omega_{i_1}^{i_2}}{\omega^{i_1}_{i_2}} \int_{G_{n,i_r}}
  \cdots \int_{G_{F_3,i_2}} \prod_{j=2}^r
  \frac{\|f|_{F_j}\|^{i_{j+1}-i_{j-1}}_1}{\|f|_{F_j}\|^{i_{j+1}-i_{j}}_{\infty}}
  \|f|_{F_2}\|^{i_{1}}_1 dF_2 \ldots dF_r \\ &=
  \frac{\omega_{i_1}^{i_2}}{\omega^{i_1}_{i_2}} \int_{G_{n,i_r}}
  \cdots \int_{G_{F_4,i_3}} \prod_{j=3}^r
  \frac{\|f|_{F_j}\|^{i_{j+1}-i_{j-1}}_1}{\|f|_{F_j}\|^{i_{j+1}-i_{j}}_{\infty}}
  \int_{G_{F_3,i_2}}
  \frac{\|f|_{F_2}\|^{i_{3}}_1}{\|f|_{F_2}\|^{i_{3}-i_{2}}_{\infty}}
  dF_2 dF_3 \ldots \ldots dF_r \\ &\leq
  \frac{\omega_{i_1}^{i_2}}{\omega^{i_1}_{i_2}}
  \frac{\omega_{i_2}^{i_3}}{\omega^{i_2}_{i_3}} \int_{G_{n,i_r}}
  \cdots \int_{G_{F_4,i_3}} \prod_{j=3}^r
  \frac{\|f|_{F_j\|^{i_{j+1}-i_{j-1}}_1}}{\|f|_{F_j}\|^{i_{j+1}-i_{j}}_{\infty}}
  \|f|_{F_3}\|^{i_{2}}_1 dF_3 \ldots dF_r\\ &=\cdots \\ &\leq \prod_{j=1}^r
  \frac{\omega_{i_j}^{i_{j+1}}}{\omega_{i_{j+1}}^{i_j}} \|f\|_1^{i_r}.
\end{align*}
\end{proof}
  
In \cite{DPP}, more general versions of \eqref{eqn:DPP} are proved
with multiple functions and different powers.  These also carry over
to extremal inequalities on flag manifolds by mimicking the previous
proof. As a sample, we mention just one statement. Let $1 \leq q \leq
i_1$ and let $f_1, \ldots, f_q$ be non-negative bounded integrable
functions on $\mathbb R^n$, then
\begin{equation}
  \label{eqn:ext}
\int_{F_{{\bf r }}^n} \prod_{k=1}^q \prod_{j=2}^r
\frac{\|f_k|_{F_j}\|^{\frac{i_{j+1}-i_{j}}{i_j}}_1}{\|f_k|_{F_j}\|^{\frac{i_{j+1}-i_{j}}{i_j}}_{\infty}}
\frac{\norm{f_k|{F_1}}_1^{\frac{i_2}{i_1}}}{\norm{f_k|_{F_1}}^{\frac{i_2-i_1}{i_1}}_{\infty}}
d \xi_{\bf r} \leq \left( \prod_{j=1}^r
\frac{\omega_{i_j}^{\frac{i_{j+1}}{i_j}}}{\omega_{i_{j+1}}} \right)^q
\prod_{k=1}^q \|f_k\|_1.
\end{equation}

\subsection{Functional forms of the ${\bf r}$-flag quermassintegrals. }

\label{sub:func}

In this subsection we will extend the notions of ${\bf r}$-flag
quermassintegrals to functions. In particular, this will lead to
functional versions of affine quermassintegrals. This is motivated by
recent work of Bobkov, Colesanti and Fragal\'{a} \cite{BCF} and
V. Milman and Rotem \cite{MR}. The latter authors proposed and studied
a notion of quermassintegrals for $\log$-concave or even quasi-concave
functions, which we now recall.

\begin{definition}
  Suppose that $f:\mathbb{R}^n\rightarrow [0,\infty)$ is
    upper-semicontinuous and quasi-concave. For $ 1\leq i \leq n$, let
  \begin{equation}
    \label{quer-functions-1}
    V_k (f) := \int_{0}^{\infty} V_k( \{ f\geq t\}) d t .
  \end{equation}
\end{definition}

The above definition is consistent with the notion of projection of a
function onto a subspace as introduced by Klartag and Milman in
\cite{KlM}. Namely, let $f$ be an non-negative function $\mathbb
R^{n}\rightarrow [0,\infty]$ and $F\in G_{n,k}$. Define the orthogonal
projection of $f$ onto $F$ as the function $P_{F} f: F\rightarrow
[0,\infty]$ given by
\begin{equation}
\label{proj-def}
(P_{F} f) ( z) := \sup_{y\in
  F^{\perp}} f ( z+y) .
\end{equation}
Note that if $K$ is compact and $ f:= {\bf 1}_{K}$ then $P_{F}f :=
{\bf 1}_{P_{F}(K)}$. Moreover, from the definition, one has
\begin{equation}
\label{proj-f-1}
\{ z\in F : (P_{F}f)(z) >t \} = P_{F} ( \{ x\in \mathbb R^{n}: f(x) >t\}).
\end{equation}

Assume now that $f:=\mathbb R^{n} \rightarrow [0,\infty)$ and that for
  each $t>0$, the set $\{x\in \mathbb{R}^n:f(x)\geq t\}$ is
  compact. For $1\leq k \leq n-1$, we define the affine
  quermassintegral of $f$ by
\begin{equation}
  \label{aff-quer-f}
  \Phi_{[k]} (f) := \int_{0}^{\infty} \Phi_{[k]} ( \{ f\geq t\}) dt =
  \int_{0}^{\infty} \left(\int_{G_{n,k}} | \{ P_{F} f\geq t\} |^{-n} d
  F \right)^{\frac{1}{ nk}} dt .
\end{equation}
For $ 1\leq i_{1} < i_{2} < \cdots < i_{r} = n-1$, ${\bf r}:= (i_{1},
\cdots , i_{r})$ we define the ${\bf r}$-flag quermassintegrals of $f$
by
\begin{equation}
\label{aff-flag-quer-f}
\Phi_{{\bf r}} (f) := \int_{0}^{\infty} \Phi_{{\bf r}} ( \{ f\geq t\}) dt.
\end{equation}

For comparison, we recall that for every $f:\mathbb R^{n} \rightarrow
[0,\infty] $,
\begin{equation}
  \label{L1}
  \int_{\mathbb R^{n}} f (x) d x = \int_{0}^{\infty} | \{ f\geq t \}|
  dt.
\end{equation} For $\lambda\in \mathbb R\setminus \{0\}$ and $f$ as above, we write
\begin{equation}
\label{f-lambda}
f_{(\lambda)} :\mathbb R^{n} \rightarrow [0,\infty] , \ {\rm as}
\ f_{(\lambda)} ( x) := f \left(\frac{ x}{ \lambda}\right) ,
\end{equation}
and if $T\in GL_{n}$, 
\begin{equation}
\label{f-T}
f\circ T :\mathbb R^{n} \rightarrow [0,\infty] , \ {\rm as} \ f\circ T ( x) := f (T^{-1} x) .
\end{equation}
\noindent Note that if $f:= {\bf 1}_{K}$, then
$$ f_{(\lambda)} (x) = {\bf 1}_{\lambda K} (x) \ {\rm and} \ f\circ T (x) = {\bf 1}_{TK} (x)  . $$ 
Let $ f:\mathbb R^{n} \rightarrow [0, \infty]$, $\lambda >0$ and $
 T\in GL_{n}$. Then
\begin{equation}
\label{b-f-lambda}
\{ f\circ T \geq t \} = T\left( \{ f\geq t \} \right) \ {\rm and }
\ \{ f_{(\lambda)} \geq t \} = \lambda \{ f \geq t \} .
\end{equation}
Then \eqref{b-f-lambda}, the $1$-homogenuity of the ${\bf r}$-flag
quermassintegrals for sets as well as the affine invariance of these
quantities imply the following.

\begin{theorem}
 Let $f:\mathbb R^{n}\rightarrow [0,\infty]$, $1\leq i_{1}<\cdots <
 i_{r}\leq n-1$ and ${\bf r} := (i_{1}, \cdots , i_{r})$. Let
 $\lambda>0$ and $T$ be an affine volume-preserving map. Then
\begin{equation}
\Phi_{{\bf r}} (f_{(\lambda)} ) = \lambda \Phi_{{\bf r}}(f) \ {\rm and} \ \Phi_{{\bf r}} (f\circ T) = \Phi_{{\bf r}}(f) . 
\end{equation}
\end{theorem}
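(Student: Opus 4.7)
The plan is to reduce both identities to the already established properties of the ${\bf r}$-flag quermassintegrals on compact sets via the layer-cake representation that \emph{defines} $\Phi_{\bf r}(f)$. By definition,
\[
\Phi_{{\bf r}}(f) = \int_{0}^{\infty} \Phi_{{\bf r}}(\{f\geq t\})\,dt,
\]
so everything boils down to understanding how the level sets $\{f\geq t\}$ transform under $f\mapsto f_{(\lambda)}$ and $f\mapsto f\circ T$, and then invoking the corresponding properties of $\Phi_{\bf r}$ on compact sets.

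First, I would combine the level-set identities recorded in \eqref{b-f-lambda}, namely $\{f_{(\lambda)}\geq t\}=\lambda\{f\geq t\}$ and $\{f\circ T\geq t\}=T(\{f\geq t\})$, with two facts already proved in the paper for compact sets: the $1$-homogeneity \eqref{homogen}, $\Phi_{\bf r}(\lambda L)=\lambda\,\Phi_{\bf r}(L)$, and the affine invariance given by Theorem \ref{thm-aff}, $\Phi_{\bf r}(AL)=\Phi_{\bf r}(L)$ for every volume-preserving affine map $A$. These are the only nontrivial ingredients; what remains is bookkeeping inside the $t$-integral.

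Concretely, for the dilation part I would write
\[
\Phi_{\bf r}(f_{(\lambda)})=\int_0^\infty \Phi_{\bf r}(\{f_{(\lambda)}\geq t\})\,dt=\int_0^\infty \Phi_{\bf r}(\lambda\{f\geq t\})\,dt=\lambda\int_0^\infty \Phi_{\bf r}(\{f\geq t\})\,dt=\lambda\,\Phi_{\bf r}(f),
\]
using \eqref{b-f-lambda} in the second step and \eqref{homogen} in the third. For the affine part, the assumption that each superlevel set $\{f\geq t\}$ is compact ensures that Theorem \ref{thm-aff} applies pointwise in $t$, giving
\[
\Phi_{\bf r}(f\circ T)=\int_0^\infty \Phi_{\bf r}(T\{f\geq t\})\,dt=\int_0^\infty \Phi_{\bf r}(\{f\geq t\})\,dt=\Phi_{\bf r}(f).
\]

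There is essentially no obstacle here; the only minor points of care are the measurability of $t\mapsto \Phi_{\bf r}(\{f\geq t\})$ (inherited from the layer-cake structure and the continuity of the map $t\mapsto\{f\geq t\}$ with respect to inclusion), and the need to check that $T$ can be written as the composition of a translation and an element of $SL_n$ so that both parts of Theorem \ref{thm-aff} apply, with translation invariance of $\Phi_{\bf r}$ following from translation invariance of the projections $P_{F_j}$.
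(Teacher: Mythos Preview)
Your proposal is correct and follows exactly the approach indicated in the paper: the theorem is stated there as an immediate consequence of the level-set identities \eqref{b-f-lambda} together with the $1$-homogeneity \eqref{homogen} and the affine invariance in Theorem~\ref{thm-aff}. Your write-up simply spells out the one-line layer-cake computation that the paper leaves implicit.
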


Recall that the symmetric decreasing rearrangement of a function $f$
which is integrable (or vanishes at infinity). For a set $A\subseteq
\mathbb R^{n}$ with finite volume, the decreasing rearrangement
$A^{\ast}$ is defined as
$$ A^{\ast} := r_{A} B_{2}^{n}, $$ where $r_A$ is the volume-radius of
$A$. The symmetric decreasing rearrangement $f^{\ast}$ of $f$ is
defined as the radial function $f^{\ast}$ such that
$$\{f\geq t\}^{\ast} = \{ f^{\ast} \geq t \} , \ \forall t>0 . $$
Thus,
\begin{equation}
  \label{basic-f-1-0}
  r_{\{ f\geq t\} } B_{2}^{n} = \{ f^{\ast} \geq t \} . 
\end{equation}

Using \eqref{basic-f-1-0}, \eqref{aff-flag-quer-f} and
\eqref{tilde-sym-two}, we have the following for all non-negative
quasi-concave functions $f$ on $\mathbb R^{n}$:
$$ \Phi_{{\bf r}} (f) = \int_{0}^{\infty} \Phi_{{\bf r}}(\{ f\geq t\})
d t \geq c \int_{0}^{\infty} \Phi_{{\bf r}}(r_{\{ f\geq t\}}
B_{2}^{n}) d t = $$
$$ c \int_{0}^{\infty} \Phi_{{\bf r}}(\{ f^{\ast} \geq t \} ) d t=
\Phi_{{\bf r}} (f^{\ast}) . $$

Let $f$ be a non-negative quasi-concave function on $\mathbb
R^{n}$. We define
$$ d_{BM} ( f) := \sup_{t>0}d_{BM} ( \{ f\geq t\} ). $$ The results of
\S 3 lead to the following double-sided inequality for $\Phi_{[{\bf
      r}]}(f)$:
\begin{theorem}
  Let $f$ be a non-negative quasi-concave function on $\mathbb R^{n}$,
  $1\leq i_{1}<\cdots <i_{r} $ and let ${\bf r}:= (i_{1}, \cdots ,
  i_{r})$. Then
  \begin{equation}
    \label{ineq-funct-quer}
    c \Phi_{{\bf r}} (f^{\ast} ) \leq \Phi_{{\bf r}} ( f)  \leq c^{\prime} \min\left\{ \log\{ 1+d_{BM}(f)\}, \sqrt{\frac{n}{ i_{r}}} \right\}    \Phi_{{\bf r}} (f^{\ast} ) .
  \end{equation}
\end{theorem}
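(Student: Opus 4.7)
The plan is to reduce the functional inequality to the corresponding inequality for convex bodies by integrating over level sets. Since $f$ is non-negative and quasi-concave, each super-level set $K_t := \{f \geq t\}$ is a (possibly empty) convex set, and by the definition
\[
\Phi_{{\bf r}}(f) = \int_0^\infty \Phi_{{\bf r}}(K_t)\, dt,
\]
together with the identity $\{f^\ast \geq t\} = r_{K_t}B_2^n$ (from \eqref{basic-f-1-0}), it suffices to compare $\Phi_{{\bf r}}(K_t)$ to $\Phi_{{\bf r}}(r_{K_t}B_2^n)$ pointwise in $t$ and integrate.

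For the lower bound, the argument is exactly the one already sketched just before the theorem statement: apply the reverse Santal\'o-type inequality \eqref{tilde-sym-two} to each convex body $K_t$ (recalling that a quasi-concave $f$ has convex level sets, so $K_t$ is convex), then integrate:
\[
\Phi_{{\bf r}}(f) = \int_0^\infty \Phi_{{\bf r}}(K_t)\, dt
\geq c \int_0^\infty \Phi_{{\bf r}}(r_{K_t}B_2^n)\, dt
= c\,\Phi_{{\bf r}}(f^\ast).
\]

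For the upper bound, I would apply Proposition \ref{prop:iso} (specifically \eqref{Phi-r-upper}) to each convex body $K_t$, which gives
\[
\Phi_{{\bf r}}(K_t) \leq c' \min\!\left\{\sqrt{\tfrac{n}{i_r}},\; \log(1+d_{BM}(K_t))\right\} \Phi_{{\bf r}}(r_{K_t}B_2^n).
\]
Using the definition $d_{BM}(f) = \sup_{t>0} d_{BM}(K_t)$ to pull the $t$-dependence out of the factor on the right, I integrate in $t$ and again use $\{f^\ast \geq t\} = r_{K_t}B_2^n$ together with the layer-cake formula to conclude
\[
\Phi_{{\bf r}}(f) \leq c' \min\!\left\{\sqrt{\tfrac{n}{i_r}},\; \log(1+d_{BM}(f))\right\} \Phi_{{\bf r}}(f^\ast).
\]

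There is no real obstacle here; the only point requiring a sentence of justification is the interchange of the supremum/min with the integral, which is immediate because the bound $\min\{\sqrt{n/i_r}, \log(1+d_{BM}(K_t))\}$ is dominated uniformly in $t$ by the same expression with $d_{BM}(f)$ in place of $d_{BM}(K_t)$. All the geometric content has been encoded in the previous section, so this reduction via layer-cake together with quasi-concavity of $f$ (ensuring convexity of level sets) completes the proof.
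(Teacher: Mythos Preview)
Your proposal is correct and follows essentially the same approach as the paper: both arguments apply the layer-cake definition $\Phi_{{\bf r}}(f)=\int_0^\infty \Phi_{{\bf r}}(\{f\geq t\})\,dt$, invoke \eqref{tilde-sym-two} and \eqref{Phi-r-upper} on each convex level set $\{f\geq t\}$, and then use $\{f^\ast\geq t\}=r_{\{f\geq t\}}B_2^n$ together with the definition $d_{BM}(f)=\sup_{t>0}d_{BM}(\{f\geq t\})$ to integrate the resulting pointwise bounds. The paper in fact writes out only the lower bound explicitly and simply refers to \S\ref{section:inequalities} for the upper bound, so your write-up is if anything slightly more detailed.
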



\footnotesize \bibliographystyle{amsplain}

\end{document}